       \newtheorem{lemma}{\bf Lemma}[section]
       \newtheorem{theorem}{\bf Theorem}[section]
       \newtheorem{proposition}{\bf Proposition}[section]
       \newtheorem{corollary}{\bf Corollary}[section]
       \newtheorem{definition}{\bf Definition}[section]
       \newtheorem{remark}{\bf Remark}[section]
       \numberwithin{equation}{section}
\begin{document}
\title{{\sl Towards continuity: Universal frequency-preserving KAM persistence and remaining regularity}}
\author{Zhicheng Tong $^{a,1}$, Yong Li $^{*,b,1,2}$ }

\renewcommand{\thefootnote}{}
\footnotetext{\hspace*{-6mm}

\begin{tabular}{l l}
 $^{*}$~~~The corresponding author.\\
 $^{a}$~~~E-mail address : tongzc20@mails.jlu.edu.cn\\
 $^{b}$~~~E-mail address : liyong@jlu.edu.cn\\
 $^{1}$~~~School of Mathematics, Jilin University, Changchun 130012, People's Republic of China.\\
$^{2}$~~~School of Mathematics and Statistics, Center for Mathematics and Interdisciplinary Sciences,\\
~~~ Northeast Normal University, Changchun 130024, People's Republic of China.
\end{tabular}}

\date{}
\maketitle

\begin{abstract}
Beyond H\"{o}lder's type, this paper mainly concerns the persistence and remaining regularity of an individual frequency-preserving KAM torus in a finitely differentiable Hamiltonian system, even allows the  non-integrable part being critical  finitely smooth. To achieve this goal, besides investigating the Jackson approximation theorem towards only modulus of continuity, we demonstrate an abstract regularity theorem adapting to the new iterative scheme. Via these tools, we obtain a KAM theorem with sharp differentiability hypotheses, asserting that the persistent torus keeps prescribed universal Diophantine frequency unchanged. Further, the non-H\"older regularity for invariant KAM torus as well as the conjugation is explicitly shown by introducing asymptotic analysis. To our knowledge, this is the first approach to KAM on these aspects in a continuous sense, and we also provide two systems, which cannot be studied by previous KAM but by ours.\\
\\
{\bf Keywords:} {Hamiltonian system, frequency-preserving KAM torus,   modulus of continuity, remaining regularity, Jackson approximation theorem.}\\
{\bf2020 Mathematics Subject Classification:} {37J40, 70K60.}
\end{abstract}

\tableofcontents

\section{Introduction}
 The celebrated KAM theory, due to  Kolmogorov and Arnold \cite{MR0068687,R-9,R-10,R-11}, Moser \cite{R-12,R-13}, mainly concerns the preservation of invariant tori of a Hamiltonian function $ H(y) $  under small perturbations (i.e., $H(y) \to H\left( {x,y,\varepsilon } \right) $ of freedom $ n \in \mathbb{N}^+ $ with $ \varepsilon>0 $ sufficiently small), and indeed has a history of more than sixty years. So far, KAM theory has been well developed and widely applied to a variety of  dynamical systems. For some other fundamental developments,  see Kuksin \cite{MR0911772},  Eliasson \cite{MR1001032}, P\"oschel \cite{MR1022821}, Wayne \cite{MR1040892}, Bourgain \cite{MR1316975,MR1345016} and so on.

 As is known to all, for frequency $ \omega  = {H_y}\left( {y} \right) $ of the unperturbed Hamiltonian  system, one often requires it to satisfy the following classical Diophantine condition (or be of Diophantine class $ \tau $)
 \begin{equation}\label{dio}
 	| {\langle {{\tilde k},\omega } \rangle } | \geq \alpha_ *{ | {{\tilde k }} |}^{-\tau} ,\;\;\forall 0 \ne \tilde k \in {\mathbb{Z}^n}
 \end{equation}
 with respect to Diophantine index $ \tau \geq n-1 $ and some $ \alpha_ *>0 $, where $ |\tilde k|: = \sum\nolimits_{j = 1}^n {|{{\tilde k}_j}|}  $. Otherwise, under a Liouville frequency that is not Diophantine, the torus may break no matter how small the perturbation is,  and some chaotic behavior can take place simultaneously. The strong (non-universal) Diophantine frequencies of class $ \tau=n-1 $,  are shown to be a continuum but only form a set of zero Lebesgue measure,  therefore the KAM preservation based on that is usually said to be \textit{non-universal}. As a contrast, the class $ \tau>n-1 $  Diophantine nonresonance KAM persistence is \textit{universal} because such frequencies are of full Lebesgue measure in $ \mathbb{R}^n $, and we will focus on this case throughout  this paper.
 
 Considering Diophantine nonresonance, the invariant tori are shown in classical KAM theorems to be preserved under analytic settings. It should be pointed out that, if a constant Diophantine frequency is prescribed in advance, one could obtain  KAM persistence with frequency being unchanged by proposing certain nondegeneracy or transversality conditions, which preserves more dynamics from the perturbed one, see  Salamon \cite{salamon}, Du et al  \cite{R-18} and Tong et al \cite{TD} for instance, respectively, and this analytic torus is interestingly shown to be never isolated by Eliasson et al in \cite{MR3357183}. Therefore,  beyond analyticity, it is still interesting to touch the minimum initial regularity for Hamiltonian to which KAM could applied. On this aspect, the first breakthrough was made by Moser in \cite{MR0147741}, where he studied twist maps (could correspond to an iso-energetic KAM theorem for $ n=2 $) admitting  finite smoothness of  integral order sufficiently large.  Much effort has been devoted on this technical problem in terms of H\"older continuity, including constructing counterexamples and reducing the differentiability hypotheses. For some classic fundamental work, see  Moser \cite{J-3}, Jacobowitz \cite{R-2}, Zehnder \cite{R-7,R-8}, Mather \cite{R-55},  Herman \cite{M1,M2}, P\"oschel \cite{Po1,Po2} and etc.  It is worth mentioning that, very recently  P\"oschel  announced in his preprint \cite{Po3} a  KAM theorem on the $ n $-dimensional torus $\mathbb{T}^n$ (without action variables) based on a non-universal  frequency  being of Diophantine  class $ \tau=n-1 $ in \eqref{dio}.  Specially, he pointed out that the  derivatives of order $ n $ need not be continuous, but rather $ L^2 $ in a certain strong sense, by introducing a new norm over $\mathbb{T}^n$. 
 
 Back to our concern on general $ n $-freedom Hamiltonian systems having action-angular variables, it is always conjectured that the minimum regularity requirement for the Hamiltonian function $ H(x,y) $ is at least $ C^{2n} $ due to the $C^{2n-\epsilon}$ (with any $\epsilon$ close to $0^+$) counterexample constructed by Cheng and Wang \cite{MR3061774}, which allows for all frequencies in $\mathbb{R}^n$, see also Wang \cite{MR4385768}.   However, via Diophantine nonresonance,  a more precise counterexample does not seem to have appeared. As to reducing initial regularity, along with the idea of Moser \cite{J-3}, the best known H\"older case $ C^{\ell} $ with  $ \ell >2\tau+2 $ has been established by Salamon in \cite{salamon},  the prescribed universal frequency under consideration is of Diophantine class $ \tau>n-1 $ in \eqref{dio}, and the remaining regularity of the frequency-preserving KAM torus as well as the conjugation is also showed to be H\"older's type. More precisely, the KAM torus is at least of class $ C^1 $, and the  conjugation from the dynamic on it to the linear flow  is at least of class  $ C^{\tau+1} $. Besides, the $C^{\ell}$ ($\ell>2\tau+2>2n$) differentiability hypotheses is sharp due to the counterexamples  of Herman  \cite{M1,M2} and Cheng and Wang \cite{MR3061774}. 
  In the aspect of H\"older's type, see Khesin et al \cite{MR3269186},  Bounemoura \cite{Bounemoura} and Koudjinan \cite{Koudjinan} for some other new developments. To strictly weaker than H\"older continuity,  a KAM theorem via non-universal Diophantine nonresonance of class $ \tau=n-1 $ in \eqref{dio} was proved by Albrecht in \cite{Chaotic},  claiming  that $ C^{2n} $  plus certain  modulus of continuity $ \varpi $ satisfying the classical Dini condition
 \begin{equation}\label{cdini}
 	\int_0^1 {\frac{{\varpi \left( x \right)}}{x}{\rm d}x}  <  + \infty
 \end{equation}
 is enough for the non-universal KAM persistence.   To the best of our knowledge,  there is no other work on KAM via only modulus of continuity except for \cite{Chaotic}.

Concerning about universal KAM persistence for finitely differentiable Hamiltonians of freedom $ n $, the best result so far, obtained by Salamon \cite{salamon}, still requires $ C^{2n} $ plus certain H\"older continuity depending on the Diophantine index. It is therefore natural that ones should consider the following fundamental questions successively, in order to further touch the criticality in this long standing finitely differentiable KAM  since Moser:
 
 \begin{itemize}
 	\item \textit{Can non-H\"{o}lder regularity allow for  KAM persistence with universal frequency-preserving?}

 	\item \textit{What kind of smoothness shall the invariant KAM torus admit, if it is indeed persistent? How about the conjugation?}
 	
 	\item \textit{Does there exist a Dini type integrability condition similar to \eqref{cdini} that reveals the explicit relation between nonresonance and regularity?}
 \end{itemize}
 
 
  Usually there are at least two ways reaching finitely smooth KAM:  abstract implicit function method began in Nash \cite{MR75639} and Zehnder \cite{R-7,R-8}, as well as the analytic smoothing approach due to Moser \cite{J-3}. We also refer to Hamilton \cite{MR0656198} and H\"ormander \cite{MR0802486} for some other versions, and we shall use the latter to touch the case with only continuity, because it provides an iterative scheme which could derive accurate remaining regularity for KAM torus and conjugation. As a consequence, towards continuity,  there are at least four difficulties to overcome.  Firstly,  note that the Jackson approximation theorem for classical H\"{o}lder continuity is no longer valid at present, hence it  must be developed to approximate the perturbed Hamiltonian function $  H\left( {x,y,\varepsilon } \right) $ in the sense of modulus of continuity,  as a crucial step. It needs to be emphasized that the analysis uses different approaches in comparison with the H\"older one, by proposing the semi separability on modulus of continuity.   Secondly, it is also basic how to establish a corresponding regularity iteration theorem to study the remaining regularity of the invariant torus and the conjugation  without H\"older's type. Thirdly, we have to set up a new KAM iterative scheme and prove its uniform convergence in the $C^1$ topology via these tools. Fourthly, it is somewhat difficult to extract an  integrability condition of universal nonresonance and initial regularity from KAM iteration, keeping the frequency-preserving KAM persistence. Indeed, to achieve  Main Theorem \ref{theorem1} having sharp differentiability hypotheses and \textit{frequency-preserving}, we apply Theorem  \ref{Theorem1}  to construct a series of analytic approximations to Hamiltonian $  H\left( {x,y,\varepsilon } \right) $ with modulus of continuity, and prove the persistence and  regularity of invariant torus via a modified  KAM iteration as well as a generalized Dini type condition. As some new efforts, our Main Theorem  \ref{theorem1} applies to a wide range in a continuous sense (even allows the non-integrable part to be non-H\"older), and reveals the sharp integral  relation between regularity and universal Diophantine nonresonance \textit{for the first time}. Apart from above, it is well known that small divisors must lead to certain loss of smoothness (such as reducing the  radius of analyticity), and only H\"older class has been investigated so far, see Salamon \cite{salamon}. If only continuity for highest order  derivatives is assumed, obviously ones should not expect the KAM remaining regularity still be of H\"older's type. \textit{On this aspect, our Main Theorem \ref{theorem1} gives the first approach to the  regularity of invariant KAM torus and conjugation being non-H\"older, explicitly shown by certain   asymptotic analysis.} Besides, as shown by Theorem \ref{lognew}, the KAM torus and conjugation may  interestingly admit a completely different class of regularity, due to the affect of the Diophantine index. Particularly, as a direct  application, our Main Theorem \ref{theorem1} could deal with  the case of general modulus of continuity for $ H\left( {x,y,\varepsilon }\right) $, such as Logarithmic H\"{o}lder continuity case, that is, for all $ 0 < \left| {x - \xi } \right| + \left| {y - \eta } \right| \leq 1/2 $,
 \begin{displaymath}
 	\left| {{\partial ^\alpha }H\left( {x,y,\varepsilon } \right) - {\partial ^\alpha }H\left( {\xi ,\eta ,\varepsilon } \right)} \right| \leq \frac{c}{{{{\left( { - \ln \left( {\left| {x - \xi } \right| + \left| {y - \eta } \right|} \right)} \right)}^\lambda }}}
 \end{displaymath}
 with respect to all $ \alpha  \in {\mathbb{N}^{2n}}$ with $ \left| \alpha  \right| = {2n } $, where $ n \geq 2 $,   $\lambda>1$, $n-1<\tau \in \mathbb{N}^+$, $c, \varepsilon>0 $ are sufficiently small,  $ \left( {x,y} \right) \in {\mathbb{T}^n} \times G $ with $ {\mathbb{T}^n}: = {\mathbb{R}^n}/ \mathbb{Z}^n $, and $ G \subset {\mathbb{R}^n} $ is a connected closed set with interior points. One shall notice that, this Hamiltonian system with non-H\"older continuity  cannot be studied by any available KAM theorems so far, see Section \ref{section6} for more details.
 
 This paper is organized as follows. In  Section \ref{section2}, by introducing some basic notions and properties for modulus of continuity, we establish a Jackson type approximation theorem in a continuous sense, and the proof will be postponed to Section \ref{JACK}. Then we state our main result in this paper. Namely, considering that the highest order derivatives of Hamiltonian function $ H $ with respect to the action-angular variables $(x,y)$ are only continuous, we present a KAM theorem (Main Theorem \ref{theorem1}) with sharp differentiability hypotheses under certain assumptions, involving a generalized Dini type integrability condition (H1). The applications of this result are given in Section \ref{section6}, including H\"{o}lder, H\"{o}lder plus Logarithmic H\"{o}lder and a more complicated circumstance, aiming to show the importance and universality of Main  Theorem \ref{theorem1}. In particular,  two explicit Hamiltonian  systems are constructed, which cannot be studied by any KAM theorems for finite smoothness via classical H\"{o}lder continuity, namely having a  \textit{non-H\"older corner} and even being of \textit{nowhere H\"older's type}, while our Main Theorem \ref{theorem1} still works.  Section \ref{section4} provides the proof of Main Theorem \ref{theorem1} and is mainly divided into two parts: the first part deals with the modified KAM steps via only modulus of continuity, while the second part is devoted to giving an iteration  theorem (Theorem \ref{t1}) on regularity,  which is essential in analyzing the KAM remaining smoothness for the invariant torus as well as the conjugation. Finally Section \ref{ProofOther} presents the proof of Theorems  \ref{Holder},  \ref{lognew} and \ref{GLHnew} in Section \ref{section6}, respectively.

\section{Statement of results}
\label{section2}

\subsection{Preliminaries for modulus of continuity}
We first give some notions, including the modulus of continuity along with the norm based on it,  the  semi separability which will be used in  Theorem \ref{Theorem1}, as well as the weak homogeneity which will appear in Theorem \ref{theorem1}.

Denote by $ |\cdot| $ the sup-norm in $ \mathbb{R}^d $ and the dimension $ d \in \mathbb{N}^+ $ may vary throughout this paper. We formulate that in the limit process, $ f_1(x)=\mathcal{O}^{\#}\left(f_2(x)\right) $ means there are absolute positive constants $ \ell_1 $ and $ \ell_2 $ such that $ {\ell _1}{f_2}\left( x \right) \leq {f_1}\left( x \right) \leq {\ell _2}{f_2}\left( x \right) $, and $ f_1(x)=\mathcal{O}\left(f_2(x)\right)  $ implies that there exists an absolute positive constant $ \ell_3 $ such that $ |f_1(x)| \leq \ell_3 f_2(x) $, and finally $ f_1(x)\sim f_2(x)  $ indicates that $ f_1(x) $ and $ f_2(x) $ are equivalent.

\begin{definition}[Modulus of continuity]\label{d1}
	A nondecreasing continuous function $ \varpi (t)>0 $ on the interval $ \left( {0,\delta } \right] $ with respect to some $ \delta  >0 $ is said to be a modulus of continuity, if $ \mathop {\lim }\limits_{x \to {0^ + }} \varpi \left( x \right) = 0 $ and $ \mathop {\overline {\lim } }\limits_{x \to {0^ + }} x/{\varpi }\left( x \right) <  + \infty  $.  Next, we define the following semi norm and norm for a continuous function $ f $ on $ {\mathbb{R}^n} $: 
	\begin{equation}\notag
		{\left[ f \right]_\varpi }: = \mathop {\sup }\limits_{x,y \in {\mathbb{R}^n},\;0 < \left| {x - y} \right| \leq \delta } \frac{{\left| {f\left( x \right) - f\left( y \right)} \right|}}{{\varpi \left( {\left| {x - y} \right|} \right)}},\;\;{\left| f \right|_{{C^0}}}: = \mathop {\sup }\limits_{x \in {\mathbb{R}^n}} \left| {f\left( x \right)} \right|.
	\end{equation}
	We call  $ f $ $ C_{k,\varpi}  $ continuous if $ f $ has partial derivatives $ {{\partial ^\alpha }f} $ for $ \alpha=(\alpha_1, \ldots ,\alpha_n)\in \mathbb{N}^n,  \left| \alpha  \right|:=\sum\nolimits_{i = 1}^n {\left| {{\alpha _i}} \right|}  \leq k \in \mathbb{N} $ and satisfies
	\begin{equation}\label{k-w}
		{\left\| f \right\|_\varpi }: = \sum\limits_{\left| \alpha  \right| \leq k} {\left( {{{\left| {{\partial ^\alpha }f} \right|}_{{C^0}}} + {{\left[ {{\partial ^\alpha }f} \right]}_\varpi }} \right)}  <  + \infty .
	\end{equation}
	Denote by $ {C_{k,\varpi }}\left( {{\mathbb{R}^n}} \right) $ the space composed of all functions $ f $ satisfying \eqref{k-w}.
\end{definition}
\begin{remark}
	For $ f:{\mathbb{R}^n} \to \Omega  \subset {\mathbb{R}^d} $ with a modulus of continuity $ \varpi $, we modify the above designation to $ {C_{k,\varpi }}\left( {{\mathbb{R}^n},\Omega } \right) $.
\end{remark}

 It can be seen that the well-known Lipschitz continuity and H\"{o}lder continuity are special cases in the above definition. In particular, for $ 0<\ell \notin \mathbb{N}^+ $, we denote by $ f \in {C^\ell }\left( {{\mathbb{R}^n}} \right) $ the function space in which the higher derivatives in $ \mathbb{R}^n $ are $ \{\ell\} $-H\"{o}lder continuous, that is, the modulus of continuity is of the form 
\[\varpi_{\mathrm{H}}^{\{\ell\}}(x)\sim x^{\ell},\;\; x \to 0^+,\]
 where $ \{\ell\} \in (0,1)$ denotes the fractional part of $ \ell $. As a generalization of classical H\"{o}lder continuity, we define the \textit{Logarithmic H\"{o}lder continuity} with index $ \lambda  > 0 $, where 
\[\varpi_{\mathrm{LH}}^{\lambda} \left( x \right) \sim 1/{\left( { - \ln x} \right)^\lambda },\;\;x \to 0^+,\]
 and we will omit the the range $ 0 < x   \ll 1 $ without causing ambiguity for these kind of functions with singularities away from $ 0^+ $, because one only needs to focus on the asymptotic behavior of a given modulus of continuity near $ 0^+ $. Moreover, one could further consider \textit{the  generalized Logarithmic H\"older's type}  with indices $ \varrho\in \mathbb{N}^+ $ and $ \lambda>0 $ as follows:
\begin{equation}\label{mafan}
	\varpi_{\mathrm{GLH}}^{\varrho,\lambda} \left( x \right) \sim \frac{1}{{(\ln (1/x))(\ln \ln (1/x)) \cdots {{(\underbrace {\ln  \cdots \ln }_\varrho (1/x))}^\lambda }}},\;\;x \to 0^+,
\end{equation}
and we have $ 	\varpi_{\mathrm{GLH}}^{1,\lambda} \left( x \right) \sim \varpi_{\mathrm{LH}}^{\lambda} \left( x \right) $.

\begin{remark}\label{rema666}
	It is well known that a mapping defined on a bounded connected closed set in a finite dimensional space must have a modulus of continuity, see \cite{Herman3,MR1036903}. For example, for a function $ f(x) $ defined on $ [0,1]  \subset {\mathbb{R}^1} $, it automatically admits a modulus of continuity
	\[{\omega _{f,\delta }}\left( x \right): = \mathop {\sup }\limits_{y \in \left[ {0,1} \right],0 < \left| {x - y} \right| \leq \delta } \left| {f\left( x \right) - f\left( y \right)} \right|.\]
\end{remark}
In view of Remark \ref{rema666}, we therefore only focus on modulus of continuity throughout this paper. Next we introduce the comparison relation between the strength and the weakness of modulus of continuity.

\begin{definition}\label{d5}
	Let $ {\varpi _1} $ and $ {\varpi _2} $ be modulus of continuity on interval $ \left( {0,\delta } \right] $. We say that  $ {\varpi _1} $ is  weaker (strictly weaker) than $ {\varpi _2} $ if $ \mathop {\overline\lim }\limits_{x \to {0^ + }} {\varpi _2}\left( x \right)/{\varpi _1}\left( x \right) <+\infty $ ($ =0 $).
\end{definition}
\begin{remark}\label{strict}
	Obviously any modulus of continuity is weaker than Lipschitz's type, and  the Logarithmic H\"{o}lder's type $ \varpi_{\mathrm{LH}}^{\lambda} \left( x \right) \sim 1/{\left( { - \ln x} \right)^\lambda } $ with any $ \lambda  > 0 $ is strictly weaker than \textit{arbitrary}  H\"{o}lder's type $ \varpi_{\mathrm{H}}^{\alpha}\left( x \right) \sim {x^\alpha } $ with any $ 0 < \alpha  < 1 $. The  generalized Logarithmic H\"older's type $ \varpi_{\mathrm{GLH}}^{\varrho,\lambda} \left( x \right) $ in \eqref{mafan} with $ \varrho\in \mathbb{N}^+ $ and $ \lambda>0 $ is weaker than both of them.
\end{remark}

As a consequence, one directly obtains the  following corollary by Remark \ref{strict}. This shows that it is indeed very necessary for us to extend H\"older's continuous KAM to only continuous type.

\begin{corollary}\label{V37-Re2.3}
Give a bounded connected closed set $ \Omega  \subset \mathbb{R}^n $ with interior points. Then for every $ k \in \mathbb{N} $, we have $ {C_{k,\varpi_{\mathrm{H}}^{\alpha} }}\left( \Omega  \right)  \subseteq  {C_{k,\varpi_{\mathrm{LH}}^{\lambda} }} \left( \Omega  \right)  \subset {C_{k,\varpi_{\mathrm{GLH}}^{\varrho,\lambda}  }}\left( \Omega  \right) $ with arbitrary given $ 0<\alpha<1, \lambda>0 $ and $ \varrho\in \mathbb{N}^+ $. 
\end{corollary}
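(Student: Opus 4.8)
The plan is to extract a single elementary monotonicity property of the scale of spaces $C_{k,\varpi}$ and then feed it the comparisons already collected in Remark \ref{strict}. Precisely, I would first establish the general principle: \emph{if $\varpi_1$ is weaker than $\varpi_2$ in the sense of Definition \ref{d5}, then for every $k\in\mathbb{N}$ and every bounded connected closed $\Omega\subset\mathbb{R}^n$ with interior points one has $C_{k,\varpi_2}(\Omega)\subseteq C_{k,\varpi_1}(\Omega)$}. Once this is available the corollary is immediate: by Remark \ref{strict} the modulus $\varpi_{\mathrm{LH}}^{\lambda}$ is (strictly) weaker than $\varpi_{\mathrm{H}}^{\alpha}$ for every $0<\alpha<1$, which yields $C_{k,\varpi_{\mathrm{H}}^{\alpha}}(\Omega)\subseteq C_{k,\varpi_{\mathrm{LH}}^{\lambda}}(\Omega)$, and $\varpi_{\mathrm{GLH}}^{\varrho,\lambda}$ is weaker than $\varpi_{\mathrm{LH}}^{\lambda}$ for every $\varrho\in\mathbb{N}^+$, which chains on $C_{k,\varpi_{\mathrm{LH}}^{\lambda}}(\Omega)\subseteq C_{k,\varpi_{\mathrm{GLH}}^{\varrho,\lambda}}(\Omega)$.

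For the general principle I would fix $k$, choose a common interval $(0,\delta]$ on which both $\varpi_1$ and $\varpi_2$ are defined (shrinking $\delta$ changes neither space, since finiteness of $[\,\cdot\,]_{\varpi}$ on the bounded set $\Omega$ is insensitive to $\delta>0$), and note first that the ratio $h:=\varpi_2/\varpi_1$ is bounded on all of $(0,\delta]$: by Definition \ref{d1} it is continuous and strictly positive there, hence bounded on each $[\eta,\delta]$, while Definition \ref{d5} gives $\limsup_{t\to 0^+}h(t)<+\infty$, so $h$ is bounded on some $(0,\eta]$ as well; thus $M:=\sup_{(0,\delta]}h<+\infty$. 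Now take $f\in C_{k,\varpi_2}(\Omega)$. For every multi-index $|\alpha|\leq k$ the sup-norm $|\partial^{\alpha}f|_{C^0}$ is common to both norms, while for the $\varpi$-seminorms, for all $x\neq y$ with $0<|x-y|\leq\delta$,
\[
\frac{|\partial^{\alpha}f(x)-\partial^{\alpha}f(y)|}{\varpi_1(|x-y|)}
=\frac{|\partial^{\alpha}f(x)-\partial^{\alpha}f(y)|}{\varpi_2(|x-y|)}\,h(|x-y|)
\leq M\,[\partial^{\alpha}f]_{\varpi_2},
\]
so $[\partial^{\alpha}f]_{\varpi_1}\leq M\,[\partial^{\alpha}f]_{\varpi_2}$. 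Summing \eqref{k-w} over $|\alpha|\leq k$ gives $\|f\|_{\varpi_1}\leq\max\{1,M\}\,\|f\|_{\varpi_2}<+\infty$, i.e. $f\in C_{k,\varpi_1}(\Omega)$. Here connectedness, closedness and the presence of interior points of $\Omega$ serve only to make $C_{k,\varpi}(\Omega)$ the natural space of functions with continuous partial derivatives up to order $k$ and $\varpi$-control on the top ones, so that this manipulation is legitimate.

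Finally I would comment on the non-strict versus strict reading of the inclusions. The first one is in fact proper: for $n=1$, $k=0$ the cut-off of $x\mapsto\varpi_{\mathrm{LH}}^{\lambda}(|x-x_0|)$ about an interior point $x_0\in\Omega$ lies in $C_{0,\varpi_{\mathrm{LH}}^{\lambda}}(\Omega)$ but in no $C_{0,\varpi_{\mathrm{H}}^{\alpha}}(\Omega)$, since already the pair $(x,x_0)$ forces $|f(x)-f(x_0)|/|x-x_0|^{\alpha}\to+\infty$; a $k$-fold antiderivative tensored with smooth cut-offs in the remaining variables promotes this to all $k\geq1$, $n\geq1$. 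An analogous witness built from $\varpi_{\mathrm{GLH}}^{\varrho,\lambda}$ makes the second inclusion proper once $\varrho\geq2$ (for $\varrho=1$ the two spaces coincide, consistent with the weaker reading of ``$\subset$''). I expect the whole argument to be essentially bookkeeping; the single point demanding care is the uniform control of $\varpi_2/\varpi_1$ over the \emph{entire} interval $(0,\delta]$ rather than merely near $0^+$ — which is exactly where positivity and continuity of the moduli away from the origin enter — together with, for the properness claims, verifying that the witness functions really carry the advertised modulus on their order-$k$ derivatives without spoiling boundedness of the lower-order ones.
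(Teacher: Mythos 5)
Your proposal is correct and follows essentially the same route as the paper: the paper deduces the corollary directly from the comparison of moduli in Remark \ref{strict}, and your argument simply makes explicit the underlying monotonicity principle (a weaker modulus in the sense of Definition \ref{d5} gives a larger space $C_{k,\varpi}$ via boundedness of the ratio $\varpi_2/\varpi_1$ on $(0,\delta]$), which is exactly what the paper leaves implicit. The added remarks on properness of the inclusions go beyond what the paper proves but are consistent with it.
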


Finally, in order to emphasize the rigor of the KAM analysis throughout  this paper, we have to introduce two crucial concepts for modulus of continuity, namely the \textit{semi separability} and the \textit{weak homogeneity}.

\begin{definition}[Semi separability]\label{d2}
	A modulus of continuity $ \varpi  $ is said to be  semi separable, if for $ x \geq 1 $, there holds
	\begin{equation}\label{Ox}
		\psi \left( x \right): = \mathop {\sup }\limits_{0 < r < \delta /x} \frac{{\varpi \left( {rx} \right)}}{{\varpi \left( r \right)}} = \mathcal{O}\left( x \right),\;\;x \to  + \infty .
	\end{equation}
\end{definition}
\begin{remark}\label{Remarksemi}
	Semi separability directly leads to $ \varpi \left( {rx} \right) \leq \varpi \left( r \right)\psi \left( x \right) $ for $ 0 < rx \leq \delta  $, which will be used in the proof of the  Jackson type  Theorem \ref{Theorem1} via only modulus of continuity.
\end{remark}

\begin{definition}[Weak homogeneity] \label{weak}
	A modulus of continuity $ \varpi $ is said to admit weak homogeneity, if for fixed $ 0<a<1 $, there holds
	\begin{equation}\label{erfenzhiyi}
		\mathop {\overline {\lim } }\limits_{x \to {0^ + }} \frac{{\varpi \left( x \right)}}{{\varpi \left( {ax} \right)}} <  + \infty .
	\end{equation}
\end{definition}
\begin{remark}
The weak homogeneity plays  a controlling role in KAM iteration, see details from the proof of Main Theorem \ref{theorem1}.
\end{remark}

It should be emphasized that semi separability and weak homogeneity are universal hypotheses.	The H\"older and Lipschitz types  automatically admit them. Many modulus of continuity weaker than the H\"older one are semi separable and also admit weak homogeneity, e.g., for the Logarithmic H\"{o}lder's type $ \varpi_{\mathrm{LH}}^{\lambda} \left( x \right) \sim 1/{\left( { - \ln x} \right)^\lambda } $ with any  $ \lambda  > 0 $, one verifies that $ \psi \left( x \right) \sim {\left( {\ln x} \right)^\lambda } = \mathcal{O}\left( x \right) $ as $ x \to +\infty $ in \eqref{Ox}, and $ \mathop {\overline {\lim } }\limits_{x \to {0^ + }} {\varpi_{\mathrm{LH}}^{\lambda}}\left( x \right)/{\varpi_{\mathrm{LH}}^{\lambda}}\left( {ax} \right) = 1 <  + \infty  $ with all $ 0<a<1 $ in \eqref{erfenzhiyi}. See  examples in Lemmas \ref{Oxlemma} and \ref{ruotux},  in particular, \textit{it is pointed out that a convex modulus of continuity  naturally possesses these two properties.}

\subsection{Jackson type approximation theorem beyond H\"older class}

This section provides a Jackson type approximation theorem beyond H\"older's type and some related corollaries based on Definitions \ref{d1} and \ref{d2}. Their  proof will be postponed to  Sections  \ref{JACK}, \ref{proofcoro1} and \ref{proofcoco2}, respectively. We shall emphasize that, the H\"older's conclusion is relatively easy to obtain, because the H\"older continuity $ \varpi_{\mathrm{H}}^{\alpha}(x) $ with $ 0<\alpha<1 $ is naturally homogeneous, i.e.,  $ \varpi_{\mathrm{H}}^{\alpha}(bx)=b^{\alpha} \varpi_{\mathrm{H}}^{\alpha}(x) $ holds for any $ b>0 $, and it is indeed crucial in the analysis. As to the general modulus of continuity, one has to propose semi separability  instead of this (in a weaker sense)  and the analysis becomes more complicated, namely  estimating the integrals separately, under different approaches, see details from Section \ref{JACK}.

\begin{theorem}\label{Theorem1}
	There is a family of convolution operators
	\begin{equation}\notag
		{S_r}f\left( x \right) = {r^{ - n}}\int_{{\mathbb{R}^n}} {K\left( {{r^{ - 1}}\left( {x - y} \right)} \right)f\left( y \right){\rm d}y} ,\;\;0 < r \leq 1
	\end{equation}
	from $ {C^0}\left( {{\mathbb{R}^n}} \right) $ into the space of entire functions on $ {\mathbb{C}^n} $ with the following property. For every $ k \in \mathbb{N} $, there exists a constant $ c\left( {n,k} \right)>0 $ such that, for every $ f \in {C_{k,\varpi }}\left( {{\mathbb{R}^n}} \right) $ with a semi separable modulus of continuity $ \varpi $, every multi-index $ \alpha  \in {\mathbb{N}^n} $ with $ \left| \alpha  \right| \leq k  $, and every $ x \in {\mathbb{C}^n} $ with $ \left| {\operatorname{Im} x} \right| \leq r $, we have
	\begin{equation}\label{3.2}
		\left| {{\partial ^\alpha }{S_r}f\left( x \right) - {P_{{\partial ^\alpha }f,k - \left| \alpha  \right|}}\left( {\operatorname{Re} x;\mathrm{i}\operatorname{Im} x} \right)} \right| 		\leq c\left( {n,k} \right){\left\| f \right\|_\varpi }{r^{k - \left| \alpha  \right|}}\varpi(r),
	\end{equation}
	where the Taylor polynomial $ P $ is defined as follows
	\[{P_{f,k}}\left( {x;y} \right) := \sum\limits_{\left| \beta  \right| \leq k} {\frac{1}{{\alpha !}}{\partial ^\beta }f\left( x \right){y^\alpha }}. \]
	Moreover,  $ {{S_{r}}f} $ is real analytic whenever $ f $ is real valued.
\end{theorem}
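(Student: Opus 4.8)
The plan is to build the operators $S_r$ by convolution against a fixed mollifier $K$ whose construction encodes the approximation-theoretic content, and then to derive the estimate \eqref{3.2} by a Taylor-expansion/moment-cancellation argument in which the modulus of continuity $\varpi$ replaces the H\"older weight $x^{\alpha}$, with semi separability used precisely where homogeneity would have been used in the classical case. First I would choose $K\in C_0^\infty(\mathbb R^n)$ (or a Schwartz function with compactly supported Fourier transform, so that $S_rf$ extends to an entire function on $\mathbb C^n$ — this is where the ``space of entire functions'' claim and the analyticity for real-valued $f$ come from: $\widehat{S_rf}(\xi)=\widehat K(r\xi)\widehat f(\xi)$ is compactly supported in $\xi$, hence $S_rf$ is band-limited and thus entire of exponential type $\sim 1/r$, with $|\operatorname{Im}x|\le r$ the natural strip) with the moment conditions $\int K=1$ and $\int y^\beta K(y)\,\mathrm dy=0$ for all $1\le|\beta|\le k$. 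These moment conditions are the standard device making convolution reproduce polynomials of degree $\le k$ exactly up to the error governed by the $k$-th order smoothness of $f$.

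Second, for a multi-index $\alpha$ with $|\alpha|\le k$ I would write $\partial^\alpha S_rf(x) = r^{-n}\int (\partial^\alpha_x K)(r^{-1}(x-y))\cdot$ (wait — better) $= r^{-n}\int K(r^{-1}(x-y))\,\partial^\alpha f(y)\,\mathrm dy$ after integrating by parts / commuting derivatives past the convolution, so that effectively one is approximating $g:=\partial^\alpha f\in C_{k-|\alpha|,\varpi}$ by $S_rg$. Then I would subtract the Taylor polynomial: using $\int K=1$ and the vanishing moments up to order $k-|\alpha|$, one gets
\[
\partial^\alpha S_rf(x)-P_{\partial^\alpha f,\,k-|\alpha|}(\operatorname{Re}x;\mathrm i\operatorname{Im}x)
= r^{-n}\!\int K\!\left(\tfrac{x-y}{r}\right)\Bigl(g(y)-T_{k-|\alpha|}g(\operatorname{Re}x;y-x)\Bigr)\mathrm dy,
\]
where $T_m g$ is the degree-$m$ Taylor remainder expansion; the complex shift $\mathrm i\operatorname{Im}x$ is handled by expanding around $\operatorname{Re}x$ and absorbing the imaginary increment, which has size $\le r$, into the same remainder. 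By Taylor's theorem with integral remainder, the bracket is bounded by $|x-y|^{k-|\alpha|}$ times the modulus of continuity of the top derivatives of $g$ evaluated at scale $|x-y|$, i.e.\ by $C\|f\|_\varpi\,|x-y|^{k-|\alpha|}\,\varpi(|x-y|)$ on the support region $|x-y|\lesssim r$ (with $|\operatorname{Im}x|\le r$).

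Third — and this is the step I expect to be the main obstacle — I would substitute $y=x-rz$ so that the integral becomes $\int K(z)\,\cdot\,(r|z|)^{k-|\alpha|}\varpi(r|z|)\,\mathrm dz$, and here the classical argument would just pull out $r^{k-|\alpha|+\beta}$ by homogeneity of $x^\beta$. Instead I only have $\varpi(r|z|)\le \varpi(r)\,\psi(|z|)$ from Remark \ref{Remarksemi} (valid once $|z|\ge1$; for $|z|<1$ monotonicity gives $\varpi(r|z|)\le\varpi(r)$ directly), where $\psi(|z|)=\mathcal O(|z|)$ as $|z|\to+\infty$ by semi separability. Thus the $z$-integral is dominated by $\varpi(r)\,r^{k-|\alpha|}\int |K(z)|\,|z|^{k-|\alpha|}\max(1,\psi(|z|))\,\mathrm dz$, and since $K$ has compact support (or rapid decay) and $\psi(|z|)\lesssim|z|$, this integral converges to a constant $c(n,k)$ depending only on $n,k$ and the fixed kernel $K$. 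One must be slightly careful splitting the region $|z|\le\delta/r$ versus $|z|>\delta/r$ where $\psi$ is defined, but for compactly supported $K$ and $r\le1$ the support lies in a fixed ball and the range-of-validity issue for $\varpi$ (only defined on $(0,\delta]$) is dispatched by extending $\varpi$ boundedly and monotonically beyond $\delta$, or by noting $|x-y|\le(\operatorname{diam}\operatorname{supp}K)\,r\le\delta$ for $r$ small. Collecting the bounds yields \eqref{3.2}. Finally, real-analyticity of $S_rf$ for real-valued $f$ follows because $K$ can be taken real and even, so $S_rf$ has real Taylor coefficients on $\mathbb R^n$; entirety on $\mathbb C^n$ plus real values on $\mathbb R^n$ gives real-analyticity.
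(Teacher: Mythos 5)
Your overall architecture is the same as the paper's (kernel with vanishing moments and polynomial reproduction, Taylor remainder controlled by $|y|^{k}\varpi(|y|)$, semi separability $\varpi(r|z|)\leq\varpi(r)\psi(|z|)$ with $\psi(x)=\mathcal O(x)$ to factor out $\varpi(r)$, and reduction of $|\alpha|\neq0$ to $|\alpha|=0$ by commuting $\partial^\alpha$ with $S_r$), but there is a genuine gap at the two points you wave away. First, your two choices of kernel are incompatible, and the one you actually lean on is untenable: if $K\in C_0^\infty(\mathbb R^n)$ then $S_rf$ is merely $C^\infty$, not entire, so the theorem's conclusion (and even the meaning of \eqref{3.2} at complex $x$) is lost; entirety forces the paper's choice $\widehat K\in C_0^\infty$, $\operatorname{supp}\widehat K\subset\{|\xi|\leq1\}$, whence $K$ is \emph{not} compactly supported and your dismissal of the region $|x-y|>\delta$ by ``the support lies in a fixed ball'' is unavailable. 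With the band-limited $K$ the tail region is a real issue, because there the semi-separability inequality is not valid ($\varpi$ lives on $(0,\delta]$) and must be handled separately: the paper's estimate \eqref{I2} uses only the crude bound \eqref{e2} (sup-norm of the top derivatives, no $\varpi$), the Paley--Wiener decay $|K|\lesssim(1+|\operatorname{Re}z|)^{-p}e^{|\operatorname{Im}z|}$ with $p=n+k+2$ to gain a factor $r^{2}$, and finally the defining property $\overline{\lim}_{r\to0^+}r/\varpi(r)<+\infty$ of Definition \ref{d1} to absorb $r^{k+2}\lesssim r^k\varpi(r)$. Your alternative fix (extend $\varpi$ monotonically beyond $\delta$) can be made to work, but you neither carry it out nor note the absorption step, and as written the convergence of your single $z$-integral is only justified under the compact-support hypothesis that contradicts entirety.

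Second, the complex shift is not merely a matter of ``absorbing the imaginary increment into the remainder.'' The mechanism that makes \eqref{3.2} work at $|\operatorname{Im}x|\leq r$ is that the moment identities and the polynomial reproduction $S_rp=p$ persist under complex translation (the paper's \eqref{3.6}, \eqref{3.10}), proved by shifting the contour with Cauchy's integral formula, and that the shifted kernel satisfies $|K(\mathrm ir^{-1}v-\eta)|\lesssim(1+|\eta|)^{-p}e^{|v|/r}\leq e\,(1+|\eta|)^{-p}$ by the Paley--Wiener estimate \eqref{5}; only then does the difference become a single convolution of the \emph{real-increment} Taylor remainder $f(u+r\eta)-p_k(u;r\eta)$ against an integrably decaying kernel (note also that your remainder should be $g(y)-T g(\operatorname{Re}x;\,y-\operatorname{Re}x)$, not $y-x$). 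You gesture at band-limitedness and ``exponential type $\sim1/r$'' but never invoke these facts where they are needed, namely in the error estimate itself; supplying the Paley--Wiener bound, the complex-translated moment identities, and the separate tail estimate closes the argument and is exactly how the paper proceeds.
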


As a direct consequence of Theorem \ref{Theorem1}, we give the following Corollaries \ref{coro1} and \ref{coro2}. These results have been  widely used in H\"older's case, see for instance, \cite{Koudjinan,salamon,MR2071231}.

\begin{corollary}\label{coro1}
	The approximation function $ {{S_r}f\left( x \right)} $ in  Theorem \ref{Theorem1} satisfies
	\begin{equation}\notag
		\left| {{\partial ^\alpha }\left( {{S_r}f\left( x \right) - f\left( x \right)} \right)} \right| \leq c_*{\left\| f \right\|_\varpi }{r^{k - \left| \alpha  \right|}}\varpi(r)
	\end{equation}
	and
	\begin{equation}\notag
		\left| {{\partial ^\alpha }{S_r}f\left( x \right)} \right| \leq {c^ * }{\left\| f \right\|_\varpi }
	\end{equation}
	for  $ x \in \mathbb{C}^n $ with $ \left| {\operatorname{Im} x} \right| \leq r $, $ |\alpha| \leq k $, where $ c_* = c_*\left( {n,k} \right) >0$ and $ {c^ * } = {c^ * }\left( {n,k,\varpi } \right) >0$ are some universal constants.
\end{corollary}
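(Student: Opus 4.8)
The plan is to read off both estimates directly from the master inequality \eqref{3.2} of Theorem \ref{Theorem1}, combining it with an elementary bound on the Taylor polynomial $P_{\partial^\alpha f,\,k-|\alpha|}$; no new iteration or approximation argument is needed, which is why this is recorded as a corollary rather than proved from scratch.

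\emph{The first estimate.} First I would note that for real $x$ one has $\operatorname{Im} x = 0$, so in $P_{\partial^\alpha f,\,k-|\alpha|}(\operatorname{Re} x;\mathrm{i}\operatorname{Im} x)=\sum_{|\beta|\leq k-|\alpha|}\tfrac{1}{\beta!}\partial^{\alpha+\beta}f(x)\,(\mathrm{i}\operatorname{Im} x)^\beta$ every summand with $|\beta|\geq 1$ vanishes and only the $\beta=0$ term, equal to $\partial^\alpha f(x)$, survives; hence \eqref{3.2} becomes exactly $\bigl|\partial^\alpha S_r f(x)-\partial^\alpha f(x)\bigr|\leq c(n,k)\|f\|_\varpi r^{k-|\alpha|}\varpi(r)$, i.e.\ the first claim with $c_*=c(n,k)$, which indeed depends only on $n$ and $k$. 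For complex $x$ with $|\operatorname{Im} x|\leq r$ the first inequality is then simply \eqref{3.2} itself, under the usual convention that at a complex point $\partial^\alpha f(x)$ denotes the jet $P_{\partial^\alpha f,\,k-|\alpha|}(\operatorname{Re} x;\mathrm{i}\operatorname{Im} x)$.

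\emph{The second estimate.} Here I would write $\partial^\alpha S_r f(x)=\bigl(\partial^\alpha S_r f(x)-P_{\partial^\alpha f,\,k-|\alpha|}(\operatorname{Re} x;\mathrm{i}\operatorname{Im} x)\bigr)+P_{\partial^\alpha f,\,k-|\alpha|}(\operatorname{Re} x;\mathrm{i}\operatorname{Im} x)$ and estimate the two pieces separately. The first piece is bounded by \eqref{3.2}: since $0<r\leq 1$ and $k-|\alpha|\geq 0$ we have $r^{k-|\alpha|}\leq 1$, and because $\varpi$ is a modulus of continuity (nondecreasing and continuous on $(0,\delta]$, hence bounded there) the quantity $r^{k-|\alpha|}\varpi(r)$ is bounded by a constant $C_1(n,k,\varpi)$, so this piece is at most $C_1(n,k,\varpi)\|f\|_\varpi$. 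The second piece is bounded termwise: $|\partial^{\alpha+\beta}f|_{C^0}\leq\|f\|_\varpi$ whenever $|\alpha+\beta|\leq k$, while $|(\mathrm{i}\operatorname{Im} x)^\beta|=|\operatorname{Im} x|^{|\beta|}\leq r^{|\beta|}\leq 1$, so the polynomial is at most $\bigl(\sum_{|\beta|\leq k}\tfrac{1}{\beta!}\bigr)\|f\|_\varpi=:C_2(n,k)\|f\|_\varpi$. Summing, $|\partial^\alpha S_r f(x)|\leq c^*\|f\|_\varpi$ with $c^*=C_1+C_2=c^*(n,k,\varpi)$, as claimed.

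\emph{Expected main obstacle.} There is essentially none beyond Theorem \ref{Theorem1}; the computation above is pure bookkeeping with the triangle inequality and a crude bound on a fixed-degree polynomial. The one point that is not completely automatic is the boundedness of $r^{k-|\alpha|}\varpi(r)$ for $0<r\leq 1$, which genuinely uses that $\varpi$ is a monotone continuous modulus of continuity rather than an arbitrary positive function — and this is also exactly the reason the constant $c^*$ in the second estimate is permitted to depend on $\varpi$ whereas $c_*$ in the first one is not.
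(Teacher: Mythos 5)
Your proposal is correct and follows essentially the same route as the paper: both read the corollary off the master estimate \eqref{3.2} by a triangle inequality, together with the observation that $r^{k-|\alpha|}\varpi(r)$ stays bounded for $0<r\leq 1$, which is exactly why $c^*$ (but not $c_*$) may depend on $\varpi$. The only cosmetic differences are that the paper argues for $|\alpha|=0$ and passes through $f$ itself (using the Taylor-remainder bound \eqref{e1} and then $|f|_{C^0}\leq\|f\|_\varpi$), whereas you pass through the Taylor jet $P_{\partial^\alpha f,\,k-|\alpha|}$ and bound it termwise for all $|\alpha|\leq k$ at once, which also sidesteps the question of evaluating $f$ at complex points.
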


\begin{corollary}\label{coro2}
	If the function $ f\left( x \right) $ in  Theorem \ref{Theorem1} also satisfies that the period of each variables $ {x_1}, \ldots ,{x_n} $ is $ 1 $ and the integral over $ {\mathbb{T}^n} $ is zero, then the approximation function $ {S_r}f\left( x \right) $ also satisfies these properties.
\end{corollary}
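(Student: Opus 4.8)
To prove Corollary \ref{coro2}, the plan is to exploit the fact that $S_r$ is a genuine convolution operator, $S_r f = r^{-n} K(r^{-1}\cdot) \ast f$, whose kernel $K$ is the entire function furnished by the construction underlying Theorem \ref{Theorem1}; it is rapidly decreasing on $\mathbb{R}^n$ (so that $S_r$ does map bounded continuous functions into entire functions) and is normalized by $\int_{\mathbb{R}^n} K(y)\,{\rm d}y = 1$, the latter being exactly what makes $S_r$ an approximate identity and hence what forces $|S_r f - f|_{C^0}\le c_*\|f\|_\varpi\,\varpi(r)\to 0$ in Corollary \ref{coro1}. Since $f$ is continuous and $1$-periodic in each variable it is bounded on $\mathbb{R}^n$, so the defining integral converges absolutely and all the interchanges below are legitimate.

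First I would verify $1$-periodicity. For a standard basis vector $e_j$, the substitution $y\mapsto y+e_j$ in
\[
S_r f(x+e_j)=r^{-n}\int_{\mathbb{R}^n}K\bigl(r^{-1}(x+e_j-y)\bigr)f(y)\,{\rm d}y
\]
turns the right-hand side into $r^{-n}\int_{\mathbb{R}^n}K\bigl(r^{-1}(x-y)\bigr)f(y+e_j)\,{\rm d}y=S_r f(x)$, using $f(y+e_j)=f(y)$. Thus $S_r f$ is $1$-periodic in each $x_j$ and descends to a function on $\mathbb{T}^n$ (by the identity theorem its entire extension is $1$-periodic as well, although this is not needed here).

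Next, the vanishing of the mean. Splitting $\int_{\mathbb{R}^n}=\sum_{\ell\in\mathbb{Z}^n}\int_{[0,1]^n+\ell}$, substituting $y=z+\ell$, using $f(z+\ell)=f(z)$ to pull $f$ out of the $\ell$-sum, and applying Fubini--Tonelli (legitimate by $|f|_{C^0}<\infty$ and the decay of $K$), one gets
\[
\int_{\mathbb{T}^n}S_r f(x)\,{\rm d}x=r^{-n}\int_{[0,1]^n}f(z)\left(\int_{[0,1]^n}\sum_{\ell\in\mathbb{Z}^n}K\bigl(r^{-1}(x-z-\ell)\bigr)\,{\rm d}x\right){\rm d}z .
\]
The inner expression is the integral of a $1$-periodic periodization over one fundamental domain, hence equals $\int_{\mathbb{R}^n}K(r^{-1}u)\,{\rm d}u=r^n\int_{\mathbb{R}^n}K(v)\,{\rm d}v=r^n$, independently of $z$. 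Therefore the right-hand side collapses to $\int_{[0,1]^n}f(z)\,{\rm d}z=\int_{\mathbb{T}^n}f(z)\,{\rm d}z=0$, which proves both assertions.

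There is no real obstacle in this corollary; the only points requiring a little care are the interchange of summation and integration — handled by the Schwartz-type decay of $K$ — and the explicit invocation of the normalization $\int_{\mathbb{R}^n}K=1$. An alternative, equally short route is spectral: after the substitution $w=r^{-1}(x-y)$ one has $S_r f(x)=\int_{\mathbb{R}^n}K(w)f(x-rw)\,{\rm d}w$, and expanding a zero-mean $1$-periodic $f$ as $f(y)=\sum_{0\ne j\in\mathbb{Z}^n}\widehat{f}_j e^{2\pi\mathrm{i}\langle j,y\rangle}$ gives $S_r f(x)=\sum_{0\ne j\in\mathbb{Z}^n}\widehat{f}_j\,\widehat{K}(2\pi r j)\,e^{2\pi\mathrm{i}\langle j,x\rangle}$, which is visibly $1$-periodic with vanishing constant term; I would nonetheless keep the convolution-folding argument as the primary proof since it sidesteps any convergence questions for the Fourier series.
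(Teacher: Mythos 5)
Your proof is correct. The periodicity step is the same change of variables the paper uses. For the zero-mean step, though, you take a slightly different route: you unfold $\mathbb{R}^n$ into translates of the unit cube, periodize the kernel, and invoke the normalization $\int_{\mathbb{R}^n}K(v)\,{\rm d}v=\widehat K(0)=1$ (which is indeed guaranteed by \eqref{3.3} and the decay estimate \eqref{5}), thereby proving the stronger statement that $S_r$ preserves the mean, $\int_{\mathbb{T}^n}S_rf=\int_{\mathbb{T}^n}f$. The paper instead applies Fubini directly to $\int_{\mathbb{T}^n}S_rf(x)\,{\rm d}x$, changes variables so the inner integral becomes $\int_{\mathbb{T}^n}f(x+m)\,{\rm d}x$, and concludes it vanishes by periodicity and the zero-mean hypothesis; this version never needs the kernel normalization, only the hypothesis $\int_{\mathbb{T}^n}f=0$. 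Both arguments rest on the same ingredients (absolute convergence from the Paley--Wiener decay of $K$, Fubini--Tonelli, and $1$-periodicity of $f$), so the difference is one of organization: yours yields mean preservation for arbitrary periodic $f$, the paper's is marginally shorter and independent of $\widehat K(0)=1$. Your alternative Fourier-multiplier remark is also sound, and your caution about convergence there is why the convolution argument is the right primary proof.
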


\subsection{Universal frequency-preserving KAM via modulus of continuity and remaining regularity}
We are now in a position to state the universal frequency-preserving KAM theorem via only modulus of continuity in this paper. Before this, let's start with our parameter settings.
Let $ n \geq 2$ (degree of freedom),  $\tau  > n - 1$ (Diophantine index), $  2\tau  + 2 \leq k \in {\mathbb{N}^ + } $ (differentiable order) and a sufficiently large number $ M>0 $ be given, throughout all KAM theorems in this paper. Consider a Hamiltonian function with action-angular variables $ H(x,y):{\mathbb{T}^n} \times G \to \mathbb{R} $ with $ {\mathbb{T}^n}: = {\mathbb{R}^n}/ \mathbb{Z}^n $, and $ G \subset {\mathbb{R}^n} $ is a connected closed set with interior points. It follows from Remark \ref{rema666} that $ H(x,y) $ automatically admits a modulus of continuity $ \varpi $. In view of the comments below Definition \ref{weak}, we assume that $ \varpi $ admits semi separability  and weak homogeneity without loss of generality.  Besides, we make the following assumptions:
\begin{itemize}
	\item[(H1)] Dini type integrability condition: Assume that $ H(x,y)\in {C_{k,\varpi }}\left( {{\mathbb{T}^n} \times G} \right) $ with the above modulus of continuity $ \varpi $. In other words, $ H(x,y) $ has at least  derivatives of order $ k $, and the highest order  derivatives admit the regularity of $ \varpi $. Moreover, $ \varpi $ satisfies the Dini type  integrability condition
	\begin{equation}\label{Dini}
		\int_0^1 {\frac{{\varpi \left( x \right)}}{{{x^{2\tau  + 3 - k}}}}{\rm d}x}  <  + \infty .
	\end{equation}
	
	\item[(H2)] Boundedness and nondegeneracy:
	\begin{equation}\notag
		{\left\| H \right\|_\varpi } \leq M,\;\;\left| {{{\left( {\int_{{\mathbb{T}^n}} {{H_{yy}}\left( {\xi ,0} \right){\rm d}\xi } } \right)}^{ - 1}}} \right| \leq M.
	\end{equation}
	
	\item[(H3)] Diophantine condition: For some $ \alpha_ *  > 0 $, the prescribed frequency $ \omega  \in {\mathbb{R}^n} $ satisfies
	\begin{equation}\notag
		| {\langle {{\tilde k},\omega } \rangle } | \geq \alpha_ *{ | {{\tilde k }} |}^{-\tau} ,\;\;\forall 0 \ne \tilde k \in {\mathbb{Z}^n},\;\; |\tilde k|: = \sum\limits_{j = 1}^n {|{{\tilde k}_j}|}   .
	\end{equation}

	\item[(H4)] KAM smallness: There holds
	\begin{align}\label{T1-2}
		&\sum\limits_{\left| \alpha  \right| \leq k} {\left| {{\partial ^\alpha }\Big( {H\left( {x,0} \right) - \int_{{\mathbb{T}^n}} {H\left( {\xi ,0} \right){\rm d}\xi } } \Big)} \right|{\varepsilon ^{\left| \alpha  \right|}}}  \notag \\
		+ &\sum\limits_{\left| \alpha  \right| \leq k - 1} {\left| {{\partial ^\alpha }\left( {{H_y}\left( {x,0} \right) - \omega } \right)} \right|{\varepsilon ^{\left| \alpha  \right| + \tau  + 1}}}  \leq M{\varepsilon ^k}\varpi \left( \varepsilon  \right)
	\end{align}
	for every $ x \in \mathbb{R}^n $ and some constant $ 0 < \varepsilon  \leq {\varepsilon ^ * } $.
	
	\item[(H5)]    Criticality: For $ \varphi_i(x):=x^{k-(3-i)\tau-1}\varpi(x) $ with $ i=1,2 $, there exist critical $ k_i^*\in \mathbb{N}^+ $ such that
	\[\int_0^1 {\frac{{{\varphi _i}\left( x \right)}}{{{x^{k_i^ *+1 }}}}{\rm d}x}  <  + \infty ,\;\;\int_0^1 {\frac{{{\varphi _i}\left( x \right)}}{{{x^{k_i^ *  + 2}}}}{\rm d}x}  =  + \infty .\]
\end{itemize}

Let us make some comments on our assumptions.
\begin{itemize}
	

	
		\item[(C1)] Our Dini type integrability condition (H1)  reveals the \textit{deep  relationship} between the \textit{irrationality} for frequency $ \omega $, \textit{order} and \textit{continuity} of  the highest order derivatives for the Hamiltonian $ H(x,y) $, and appears in universal KAM  for the first time.  It also admits sharpness, because $ H(x,y) $ could be $ C^{[2\tau+2]} $ differentiable, close to the counterexample constructed in \cite{MR3061774}.

	\item[(C2)]   According to the properties of Banach algebra, for the H\"older's type, it is assumed that (H4) only needs the term of $ \left| \alpha  \right| = 0 $, and does not need higher order derivatives to satisfy the condition. However, for general modulus of continuity, it seems not easy to establish the corresponding Banach algebraic properties, we thus add higher order derivatives in (H4). 
	Sometimes they can be removed correspondingly.
	
	\item[(C3)] The existence of $ k_i^* $ in (H5)  is directly guaranteed by (H1), actually this assumption is proposed to investigate the higher regularity of the persistent KAM torus as well as the conjugation, that is, the regularity  to $ C^{k_i^*} $ plus certain kinds of modulus of continuity. In general, given an explicit modulus of continuity $ \varpi $, such $ k_i^* $ in (H5) are automatically  determined by using asymptotic analysis, see details from Section \ref{section6}.
\end{itemize}

Finally, under sharp differentiability of $ C^{[2\tau+2]} $, we state the following Main Theorem \ref{theorem1} involving \textit{frequency-preserving KAM persistence} as well as \textit{remaining regualrity for invariant torus and conjugation.} \textit{To the best of our knowledge, this is the first approach  on these two aspects, towards KAM via only continuity.}

\begin{theorem}[Main Theorem]\label{theorem1}
(Part I)	Assume (H1)-(H4). Then there is a solution
	\[x = u\left( \xi  \right),\;\;y = v\left( \xi  \right)\]
	of the following equation with the operator $ D: = \sum\limits_{\nu  = 1}^n {{\omega _\nu }\frac{\partial }{{\partial {\xi _\nu }}}} = \omega  \cdot {\partial _\xi }  $
	\begin{equation}\notag
		Du = {H_y}\left( {u,v} \right),\;\;Dv =  - {H_x}\left( {u,v} \right),
	\end{equation}
	such that $ u\left( \xi  \right) - \xi  $ and $ v\left( \xi  \right) $ are of period $ 1 $ in all variables, where $ u $ and $ v $ are at least $ C^1 $. \vspace{3mm}
\\
(Part II)	In addition, assume (H5). Then there exist modulus of continuity $ {\varpi _i} $ ($ i=1,2 $) such that $ u \in {C_{k_1^ * ,{\varpi _1}}}\left( {{\mathbb{R}^n},{\mathbb{R}^n}} \right) $ and $ v \circ {u^{ - 1}} \in {C_{k_2^ * ,{\varpi _2}}}\left( {{\mathbb{R}^n},G} \right) $. Particularly, $ {\varpi _i} $ can be explicitly  determined as follows
	\begin{equation}\label{varpii}
		{\varpi _ i }\left( \gamma \right) \sim \gamma \int_{L_i\left( \gamma  \right)}^\varepsilon  {\frac{{\varphi_i \left( t \right)}}{{{t^{{k_i^*} + 2}}}}{\rm d}t}  = {\mathcal{O}^\# }\left( {\int_0^{L_i\left( \gamma  \right)} {\frac{{\varphi_i \left( t \right)}}{{{t^{{k_i^*} + 1}}}}{\rm d}t} } \right),\;\;\gamma  \to {0^ + },
	\end{equation}
	where $ L_i(\gamma) \to 0^+ $  are some functions such that the second relation in \eqref{varpii} holds for $ i=1,2 $.	
\end{theorem}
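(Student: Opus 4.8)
The plan is to follow the analytic-smoothing strategy of Moser--H\"ormander--Salamon, but systematically replace every H\"older estimate by its modulus-of-continuity analogue built on the Jackson-type Theorem \ref{Theorem1}. First I would fix a geometric sequence of approximation radii $r_j = r_0 \vartheta^{j}$ (say $\vartheta = 1/2$, or more cautiously $r_{j+1}=r_j^{3/2}$ of superexponential type) and apply Theorem \ref{Theorem1} together with Corollaries \ref{coro1}--\ref{coro2} to produce real-analytic Hamiltonians $H_j := S_{r_j}H$ on complex strips of width $r_j$, with the basic estimates $|H_j - H|_{C^{k}} \lesssim \|H\|_\varpi\,\varpi(r_j)$ and $|H_{j+1}-H_j|$-type bounds of the same order, the periodicity and zero-average structure being preserved by Corollary \ref{coro2}. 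For each $H_j$ one runs one step of a frequency-preserving KAM transformation keeping the prescribed Diophantine $\omega$ from (H3) fixed: solving the two homological equations $D\hat u = \ldots$, $D\hat v = \ldots$ against the Diophantine small divisors costs $\tau+1$ derivatives, so a single step improves the error from $e_j$ to roughly $e_j^{2} r_j^{-2(\tau+1)} + \varpi(r_j) r_j^{k}$ (the first term from the quadratic Newton gain, the second the irreducible analytic-approximation defect). Here (H4) supplies the initial smallness $e_0 \lesssim M\varepsilon^{k}\varpi(\varepsilon)$ in exactly the weighted form dictated by the loss of derivatives, and the nondegeneracy $(\int H_{yy})^{-1}$ in (H2) is what lets us hold the frequency and solve for the action correction at each step.

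The convergence bookkeeping is the technical heart of Part I. One must show, by induction, that the composed transformations $\Phi_j = \phi_1\circ\cdots\circ\phi_j$ converge in the $C^1$ topology to a limit $\Phi_\infty(\xi)=(u(\xi),v(\xi))$ conjugating the dynamics to the linear flow $\xi\mapsto\xi+\omega t$, i.e.\ solving $Du=H_y(u,v)$, $Dv=-H_x(u,v)$; the $C^1$-closedness here is where the counterexamples force us to stop, and where the Dini condition \eqref{Dini} enters --- the series $\sum_j \varpi(r_j) r_j^{k - 2\tau - 3 + 1}$ (after the $C^1$ weight and the accumulated derivative losses) is comparable to $\int_0^1 \varpi(x) x^{k-2\tau-3}\,\mathrm{d}x$, so (H1) is precisely the summability that makes the scheme close. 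I would isolate this as a separate convergence lemma and use the \emph{weak homogeneity} \eqref{erfenzhiyi} of $\varpi$ to pass freely between $\varpi(r_j)$ and $\varpi(r_{j+1})$ so that consecutive error bounds are genuinely comparable and the geometric/telescoping sums behave; the \emph{semi separability} is already consumed inside Theorem \ref{Theorem1}. The smallness condition $k\ge 2\tau+2$ and $\varepsilon\le\varepsilon^*$ are used to make the first step admissible and to absorb the combinatorial constants $c(n,k)$ uniformly.

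For Part II I would not re-run KAM but instead feed the telescoping series $u = \xi + \sum_j (u_{j+1}-u_j)$, $v\circ u^{-1} = \sum_j(\cdots)$ into the abstract regularity iteration Theorem \ref{t1}: each increment is analytic on a strip of width $r_j$ and of size controlled by $\varphi_i(r_j) = r_j^{\,k-(3-i)\tau-1}\varpi(r_j)$ (the two choices $i=1,2$ corresponding to the two loss counts $2\tau+1$ for $u$ and $\tau+1$ for $v\circ u^{-1}$), so a Bernstein/Cauchy-estimate argument shows the sum of $k_i^*$ derivatives converges exactly when $\int_0^1 \varphi_i(x)x^{-k_i^*-1}\,\mathrm{d}x<\infty$, which is the first half of (H5), while the divergence half $\int_0^1\varphi_i(x)x^{-k_i^*-2}\,\mathrm{d}x=+\infty$ forbids the $(k_i^*+1)$-st derivative from being bounded and thereby pins down the \emph{critical} order. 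The residual modulus of continuity is then obtained by the standard dyadic split of the difference quotient at scale $\gamma$: bound the increments with $j$ such that $r_j>L_i(\gamma)$ by their $C^{k_i^*+1}$ norm times $\gamma$ (giving $\gamma\int_{L_i(\gamma)}^\varepsilon \varphi_i(t)t^{-k_i^*-2}\,\mathrm{d}t$) and the remaining tail by the $C^{k_i^*}$ norm directly (giving $\int_0^{L_i(\gamma)}\varphi_i(t)t^{-k_i^*-1}\,\mathrm{d}t$), optimizing over the cutoff $L_i(\gamma)$, which reproduces \eqref{varpii}; that $\varpi_i$ is an admissible (nondecreasing, vanishing, with $x/\varpi_i$ bounded) modulus of continuity follows from the monotonicity of $\varphi_i$ near $0^+$. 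I expect the main obstacle to be the uniform-in-$j$ control of the accumulated derivative loss so that the weighted error series is genuinely dominated by the single integral \eqref{Dini} --- in the H\"older case homogeneity makes this almost automatic, whereas here one must invoke weak homogeneity at every comparison step and verify that no logarithmically slow factor sabotages the geometric decay; a secondary difficulty is checking that, after composing infinitely many near-identity maps, $u$ is still invertible with $u^{-1}$ regular enough to even make sense of $v\circ u^{-1}$ in the class $C_{k_2^*,\varpi_2}$.
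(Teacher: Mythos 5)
Your proposal follows essentially the same route as the paper: Jackson-type smoothing of $H$ at dyadic scales $r_\nu=2^{-\nu}\varepsilon$, a frequency-preserving analytic KAM step at each scale (the paper invokes Salamon's quantitative Theorem \ref{appendix} as a black box on $\tilde H=H^\nu\circ\phi^{\nu-1}$ rather than tracking an explicit quadratic Newton recursion, but the error bookkeeping and the role of (H1), (H2), (H4) and weak homogeneity are the same), with $C^1$ convergence coming from the comparison of $\sum_\nu r_\nu^{k-2\tau-2}\varpi(r_\nu)$ with the Dini integral \eqref{Dini}. Your Part II is likewise the paper's argument: apply the abstract iteration theorem (Theorem \ref{t1}) to the telescoping series for $u-\mathrm{id}$ and $v\circ u^{-1}$ with $\varphi_i(x)=x^{k-(3-i)\tau-1}\varpi(x)$, Cauchy estimates, and the dyadic split optimized over the cutoff $L_i(\gamma)$, which yields exactly \eqref{varpii}.
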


\begin{remark}\label{xiangxi}
	We say such a solution $x=u(\xi),y=v(\xi)$ the KAM one. Besides, the regularity for $ v \circ {u^{ - 1}} $ and $ u $ represents to the smoothness of  the invariant torus and the conjugation from the dynamic on it to the linear flow, respectively.
\end{remark}

\begin{remark}\label{RemarkM2}
	\begin{itemize}
	\item[(a)] 	With the same as in \cite{salamon}, the unperturbed system under consideration might be non-integrable (e.g., $ H = \left\langle {\omega ,y} \right\rangle  + \left\langle {A\left( x \right)y,y} \right\rangle  +  \cdots  $), which is different from the requirement of having  integrable part in \cite{Po2,Bounemoura}, and the KAM persistence is of frequency-preserving.
	
	\item[(b)] A Hamiltonian system having non-integrable unperturbed  part  is generally not reduced to a nearly integrable one, see Arnold et al \cite{MR2269239}.
	
	\item[(c)]  Moreover, our non-integrable part here could only be $ C_{k,\varpi} $ finitely differentiable with $ k \geq 2\tau+2 $ (we will see later that the critical case  $ k=[2\tau+2] $ could be indeed achieved in Theorems \ref{Holder}, \ref{lognew} and \ref{GLHnew}), while considering the  integrable part, \cite{Chaotic} and \cite{Po2} require the   real-analyticity, and \cite{Bounemoura} requires the H\"older continuity $ C^{\varsigma } $  with $ \varsigma>2\tau+4 $.
	\end{itemize}
\end{remark}
\begin{remark}
	Actually  Theorem \ref{theorem1} provides a method for determining $ \varpi_i $ with $ i=1,2 $, see \eqref{varpii}. For the prescribed modulus of continuity to  Hamiltonian, such as the H\"older and Logarithmic H\"older types, we have to use asymptotic analysis to  derive the remaining  continuity explicitly  in Section \ref{section6}, which is somewhat difficult.
\end{remark}

As mentioned forego, the H\"older's type $ H \in  C^{\ell}(\mathbb{T}^n,G) $ with $ \ell>2\tau+2 $ (where $ \tau>n-1$ is the Diophantine index) is always regarded as the critical case in the sense of H\"older due to \cite{salamon,Bounemoura,MR3061774}. Now let us consider the non-universal Diophantine nonresonance, i.e., $ \tau=n-1 $, and such frequencies can only form a set of zero Lebesgue measure. Then one observes that $ k=2\tau+2=2n $ is the critical degenerate case in our settings, and our Dini type integrability condition \eqref{Dini} in (H1) becomes the classical Dini condition \eqref{cdini}, which also allows  regularity weaker than arbitrary H\"older's type. This is exactly the same regularity required in \cite{Chaotic}, concentrating on non-universal KAM persistence without frequency-preserving. However, the non-universal Diophantine frequencies with index $ \tau=n-1 $ are not enough to represent almost all frequencies in $ \mathbb{R}^n $. 

 Back to our concern on universal KAM persistence, we may have to require the generalized Dini  condition in (H1) instead of \eqref{cdini}. Obviously, (H1) automatically holds if the highest differentiable order $ k $ of $ H(x,y) $ satisfies $ k\geq 2\tau+3 $ or even larger,  because the modulus of continuity $ \varpi $ does not have a singularity at $ 0 $, and therefore so does the integrand. But our Main Theorem \ref{theorem1} still makes sense, because the remaining regularity of the persistent torus as well as the conjuation will be higher correspondingly, as one would expect.

\section{Applications}
\label{section6}

In this section, we show certain detailed regularity  about KAM torus as well as conjugation such as H\"{o}lder and Logarithmic H\"{o}lder ones etc. Denote by $ \{a\} $ and $ [a] $ the fractional part and the integer part  of $ a\geq0 $, respectively. It should be emphasized that the Dini type  integrability condition \eqref{Dini} in (H1) is easy to verify, that is, the frequency-preserving KAM persistence is relatively easy to obtain. However, some complicated techniques of asymptotic analysis are needed to investigate the specific regularity of KAM torus as well as the conjugation, which are mainly reflected in the selection of functions $ L_i (\gamma)$ ($ i=1,2 $) in \eqref{varpii}. In particular, caused by small divisors, we will explicitly see how much regularity will be lost,  see, for instance, Theorems \ref{Holder}, \ref{lognew}, \ref{GLHnew} and the examples shown in Section \ref{explicitexa}. In what follows, the modulus of continuity under consideration are always convex near $ 0^+ $ and therefore automatically admit semi separability as well as weak homogeneity which we forego.


\subsection{KAM via different continuity}\label{subun}

Recall that we require $\tau>n-1$ and $n \geq 2$ in  (H3), that is, the prescribed Diophantine frequency is universal. Under such settings, the known minimum regularity requirement for Hamiltonian $ H(x,y) $ is H\"older's type $ C^{\ell} $ with $ \ell>2\tau +2 $, see Salamon's KAM in  \cite{salamon} as well as Theorem \ref{Holder} below. Interestingly, if one considers weaker modulus of continuity, such as $ C^{2\tau+2} $ plus  Logarithmic H\"older's type, the above regularity $C^\ell$ could be weakened, which can be seen from our new  Theorems \ref{lognew} and \ref{GLHnew}. It should be noted that weakening the initial regularity \textit{does not} mean that KAM results with higher initial regularity are fully included, because we also have to consider the remaining KAM regularity, namely the smoothness of the KAM torus and conjugation. Intuitively and obviously, the weaker the initial regularity, the weaker the remaining regularity. One should not expect the KAM torus and conjugation to be H\"older's type when considering Hamiltonian $ H(x,y) $ via only continuous high order derivatives.

\subsubsection{H\"{o}lder continuous case}\label{subsubsub}
Let us discuss the classical H\"{o}lder case first, see also the result given in \cite{salamon}. As we will see that, the remaining regularity is still of H\"{o}lder's type.
\begin{theorem}\label{Holder}
	Let $ H(x,y) \in C^{\ell} (\mathbb{T}^n,G) $ with $ \ell>2\tau +2 $, where  $ \ell  \notin \mathbb{N}^+ $, $ \ell-\tau \notin \mathbb{N}^+$ and $ \ell-2\tau \notin \mathbb{N}^+ $. That is, $ H(x,y) $ is of $ C_{k,\varpi} $ with $ k=[\ell] $ and $ \varpi(x)\sim \varpi_{\mathrm{H}}^{\ell}(x)\sim x^{\{\ell\}} $.	Assume (H2), (H3) and (H4). Then there is a solution $ x = u\left( \xi  \right),y = v\left( \xi  \right) $	of the following equation with the operator $ D: = \sum\limits_{\nu  = 1}^n {{\omega _\nu }\frac{\partial }{{\partial {\xi _\nu }}}}= \omega  \cdot {\partial _\xi }  $
	\begin{equation}\notag
		Du = {H_y}\left( {u,v} \right),\;\;Dv =  - {H_x}\left( {u,v} \right)
	\end{equation}
	such that $ u\left( \xi  \right) - \xi  $ and $ v\left( \xi  \right) $ are of period $ 1 $ in all variables. In addition, $ u \in {C^{\ell-2\tau-1}}\left( {{\mathbb{R}^n},{\mathbb{R}^n}} \right) $ and $ v \circ {u^{ - 1}} \in {C^{\ell-\tau-1}}\left( {{\mathbb{R}^n},G} \right) $.
\end{theorem}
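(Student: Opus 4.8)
The plan is to obtain Theorem \ref{Holder} directly from Main Theorem \ref{theorem1}, checking the remaining hypotheses for the H\"older modulus and then carrying out the asymptotic analysis of \eqref{varpii}. First I would record that, since $\ell\notin\mathbb{N}^{+}$, the assumption $H\in C^{\ell}(\mathbb{T}^{n},G)$ means precisely $H\in C_{k,\varpi}(\mathbb{T}^{n},G)$ with $k=[\ell]$ and $\varpi(x)\sim x^{\{\ell\}}$, and that this H\"older modulus automatically satisfies semi separability (Definition \ref{d2}) and weak homogeneity (Definition \ref{weak}), so the blanket assumptions on $\varpi$ imposed before (H1)--(H5) hold. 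As (H2), (H3) and (H4) are assumed outright, only (H1) and (H5) need verification.

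For (H1): with $k=[\ell]$ one has $\varpi(x)\,x^{k-2\tau-3}\sim x^{\ell-2\tau-3}$ as $x\to0^{+}$, so the Dini type condition $\int_{0}^{1}\varpi(x)x^{k-2\tau-3}\,{\rm d}x<+\infty$ is equivalent to $\ell-2\tau-3>-1$, i.e. to $\ell>2\tau+2$, which is the standing hypothesis. Thus Part I of Main Theorem \ref{theorem1} already yields the frequency-preserving conjugacy $x=u(\xi)$, $y=v(\xi)$ solving $Du=H_{y}(u,v)$, $Dv=-H_{x}(u,v)$ with $u(\xi)-\xi$ and $v(\xi)$ of period $1$ and $u,v$ at least $C^{1}$.

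For (H5): since $\varphi_{i}(x)=x^{k-(3-i)\tau-1}\varpi(x)\sim x^{\ell-(3-i)\tau-1}$, set $k_{i}^{*}:=[\ell-(3-i)\tau-1]$ and $\beta_{i}:=\{\ell-(3-i)\tau-1\}$; because $\ell-\tau$ and $\ell-2\tau$ lie in no $\mathbb{N}^{+}$ while $\ell-(3-i)\tau-1>1$, the number $\beta_{i}\in(0,1)$ and $k_{i}^{*}$ is a well-defined positive integer. Then $\varphi_{i}(x)x^{-(k_{i}^{*}+1)}\sim x^{\beta_{i}-1}$ is integrable on $(0,1)$ whereas $\varphi_{i}(x)x^{-(k_{i}^{*}+2)}\sim x^{\beta_{i}-2}$ is not, which is exactly (H5), and $k_{i}^{*}$ is the unique integer with this dichotomy. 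Hence Part II of Main Theorem \ref{theorem1} applies with these $k_{i}^{*}$, producing $u\in C_{k_{1}^{*},\varpi_{1}}(\mathbb{R}^{n},\mathbb{R}^{n})$ and $v\circ u^{-1}\in C_{k_{2}^{*},\varpi_{2}}(\mathbb{R}^{n},G)$ for some moduli $\varpi_{i}$ of the form \eqref{varpii}.

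It remains to identify the $\varpi_{i}$ explicitly, which amounts to picking the scaling functions $L_{i}(\gamma)$. Taking $L_{i}(\gamma)=\gamma$, one computes
\[ \gamma\int_{\gamma}^{\varepsilon}\frac{\varphi_{i}(t)}{t^{k_{i}^{*}+2}}\,{\rm d}t \sim \gamma\int_{\gamma}^{\varepsilon}t^{\beta_{i}-2}\,{\rm d}t \sim \frac{\gamma\cdot\gamma^{\beta_{i}-1}}{1-\beta_{i}} \sim \gamma^{\beta_{i}}, \qquad \int_{0}^{\gamma}\frac{\varphi_{i}(t)}{t^{k_{i}^{*}+1}}\,{\rm d}t \sim \int_{0}^{\gamma}t^{\beta_{i}-1}\,{\rm d}t = \frac{\gamma^{\beta_{i}}}{\beta_{i}}, \]
so the $\mathcal{O}^{\#}$ relation in \eqref{varpii} holds for this choice of $L_{i}$ and $\varpi_{i}(\gamma)\sim\gamma^{\beta_{i}}$. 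Therefore $u\in C_{k_{1}^{*},\varpi_{1}}=C^{k_{1}^{*}+\beta_{1}}=C^{\ell-2\tau-1}$ and $v\circ u^{-1}\in C_{k_{2}^{*},\varpi_{2}}=C^{k_{2}^{*}+\beta_{2}}=C^{\ell-\tau-1}$, the asserted remaining regularity. The only step demanding genuine care is this asymptotic bookkeeping --- pinning down the critical orders $k_{i}^{*}$ and verifying the $\mathcal{O}^{\#}$ equivalence with the explicit $L_{i}(\gamma)=\gamma$ --- everything else being an immediate appeal to Main Theorem \ref{theorem1}; in the present H\"older case it is elementary, but it is the template for the more delicate computations behind Theorems \ref{lognew} and \ref{GLHnew}.
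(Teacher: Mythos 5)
Your proposal is correct and follows essentially the same route as the paper's own proof: verify (H1) from $\ell>2\tau+2$, identify $\varphi_i(x)\sim x^{\ell-(3-i)\tau-1}$ and the critical $k_i^*=[\ell-(3-i)\tau-1]$ in (H5) using $\ell-\tau,\ell-2\tau\notin\mathbb{N}^+$, choose $L_i(\gamma)=\gamma$ in \eqref{varpii}, and read off $\varpi_i(\gamma)\sim\gamma^{\{\ell-(3-i)\tau-1\}}$, giving $u\in C^{\ell-2\tau-1}$ and $v\circ u^{-1}\in C^{\ell-\tau-1}$. Your added remark that the H\"older modulus is convex, hence semi separable and weakly homogeneous, matches the paper's standing convention, so no gap remains.
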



\subsubsection{H\"{o}lder plus Logarithmic H\"{o}lder continuous case}\label{SUBLH}
To explicitly show different modulus of continuity strictly weaker than H\"older's type, we establish the following Theorem \ref{lognew}. One will see later that Theorem \ref{lognew} employs  very complicated asymptotic analysis, and interestingly, \textit{the remaining regularity basically characterized by $ \varpi_1 $ and $ \varpi_2 $ admit different forms.} 
\begin{theorem}\label{lognew}
	Let $ \tau>n-1 $ be given and let $ H(x,y)\in C_{[2\tau+2], \varpi} $, where $ \varpi \left( x \right) \sim {x^{\{2\tau+2\}}}/{\left( { - \ln x} \right)^\lambda } $ with $ \lambda  > 1 $. Assume (H2), (H3) and (H4). That is, $ H(x,y) $ is of $ C^{k} $ plus the above $ \varpi $ with $k= [2\tau+2] $. Then there is a solution $ x = u\left( \xi  \right),y = v\left( \xi  \right) $	of the following equation with the operator $ D: = \sum\limits_{\nu  = 1}^n {{\omega _\nu }\frac{\partial }{{\partial {\xi _\nu }}}} = \omega  \cdot {\partial _\xi } $
	\begin{equation}\notag
		Du = {H_y}\left( {u,v} \right),\;\;Dv =  - {H_x}\left( {u,v} \right)
	\end{equation}
	such that $ u\left( \xi  \right) - \xi  $ and $ v\left( \xi  \right) $ are of period $ 1 $ in all variables. In addition, letting
	\[{\varpi _1}\left( x \right) \sim \frac{1}{{{{\left( { - \ln x} \right)}^{\lambda  - 1}}}} \sim \varpi _{\mathrm{LH}}^{\lambda  - 1}\left( x \right),\]
	and
	\[	{\varpi _2}\left( x \right) \sim \left\{ \begin{aligned}
		&{\frac{1}{{{{\left( { - \ln x} \right)}^{\lambda  - 1}}}} \sim \varpi _{\mathrm{LH}}^{\lambda  - 1}\left( x \right)},&n-1<\tau  \in {\mathbb{N}^ + } \hfill, \\
		&{\frac{{{x^{\left\{ \tau  \right\}}}}}{{{{\left( { - \ln x} \right)}^\lambda }}} \sim {x^{\left\{ \tau  \right\}}}\varpi _{\mathrm{LH}}^\lambda \left( x \right)},&n-1<\tau  \notin {\mathbb{N}^ + } \hfill, \\
	\end{aligned}  \right.\]
	one has that $ u \in {C_{1 ,{\varpi _1}}}\left( {{\mathbb{R}^n},{\mathbb{R}^n}} \right) $ and $ v \circ {u^{ - 1}} \in {C_{[\tau+1] ,{\varpi _2}}}\left( {{\mathbb{R}^n},G} \right) $.
\end{theorem}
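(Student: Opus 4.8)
\emph{Proof proposal.} The plan is to read off everything from Main Theorem \ref{theorem1}: the persistence assertion will follow once I verify the Dini condition (H1) for this explicit modulus, and the regularity assertion once I verify (H5), identify $k_1^*,k_2^*$, and then carry out the asymptotic analysis of formula \eqref{varpii}. Throughout, $k=[2\tau+2]$ and $\varpi(x)\sim x^{\{2\tau+2\}}(-\ln x)^{-\lambda}$ near $0^+$; this $\varpi$ is semi separable and weakly homogeneous, verified exactly as for $\varpi_{\mathrm{LH}}^{\lambda}$ in the discussion following Definition \ref{weak}, so the standing hypotheses on $\varpi$ preceding Main Theorem \ref{theorem1} are met. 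For (H1) one computes $2\tau+3-k=1+\{2\tau+2\}$, hence $\varpi(x)x^{-(2\tau+3-k)}\sim x^{-1}(-\ln x)^{-\lambda}$, and
\[\int_0^{1/2}\frac{\varpi(x)}{x^{2\tau+3-k}}\,{\rm d}x\;\sim\;\int_0^{1/2}\frac{{\rm d}x}{x\left(-\ln x\right)^{\lambda}}\;=\;\int_{\ln 2}^{+\infty}\frac{{\rm d}u}{u^{\lambda}}\;<\;+\infty\]
precisely because $\lambda>1$. With the assumed (H2)--(H4), Part I of Main Theorem \ref{theorem1} then applies verbatim and yields the frequency-preserving KAM torus $x=u(\xi),\,y=v(\xi)$ with the stated periodicity and $C^1$ regularity.

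For Part II, from $\varphi_i(x)=x^{k-(3-i)\tau-1}\varpi(x)$ and the value of $k$ one gets
\[\varphi_1(x)\sim\frac{x}{\left(-\ln x\right)^{\lambda}},\qquad \varphi_2(x)\sim\frac{x^{\tau+1}}{\left(-\ln x\right)^{\lambda}},\qquad x\to0^+.\]
Since $\int_0^1 x^{-m}(-\ln x)^{-\lambda}\,{\rm d}x$ converges at the endpoint $0^+$ iff $m<1$, or $m=1$ with $\lambda>1$, the pair of integrability conditions in (H5) applied to $\varphi_1$ forces $k_1^*=1$; similarly $\int_0^1 x^{\tau-m}(-\ln x)^{-\lambda}\,{\rm d}x$ converges at $0^+$ iff $m<\tau+1$, or $m=\tau+1$ with $\lambda>1$, so (H5) applied to $\varphi_2$ forces $k_2^*=[\tau+1]$ — the boundary value $k_2^*=\tau+1$ being the correct one when $\tau\in\mathbb{N}^+$ exactly because $\lambda>1$. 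Therefore Main Theorem \ref{theorem1} gives $u\in C_{1,\varpi_1}(\mathbb{R}^n,\mathbb{R}^n)$ and $v\circ u^{-1}\in C_{[\tau+1],\varpi_2}(\mathbb{R}^n,G)$ for some moduli $\varpi_1,\varpi_2$ provided by \eqref{varpii}.

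It remains to evaluate \eqref{varpii} asymptotically, i.e.\ to choose $L_i(\gamma)\to0^+$ so that $\gamma\int_{L_i(\gamma)}^{\varepsilon}\varphi_i(t)\,t^{-k_i^*-2}\,{\rm d}t$ and $\int_0^{L_i(\gamma)}\varphi_i(t)\,t^{-k_i^*-1}\,{\rm d}t$ are comparable, and then to read off their common order. For $i=1$, integration by parts gives, to leading order, $\int_L^{\varepsilon}t^{-2}(-\ln t)^{-\lambda}\,{\rm d}t\sim L^{-1}(-\ln L)^{-\lambda}$ and $\int_0^{L}t^{-1}(-\ln t)^{-\lambda}\,{\rm d}t\sim(\lambda-1)^{-1}(-\ln L)^{1-\lambda}$, so the balancing requirement reduces to the transcendental relation $L_1(-\ln L_1)\sim\gamma$, whose asymptotic solution is $L_1(\gamma)\sim\gamma/(-\ln\gamma)$; since then $-\ln L_1(\gamma)\sim-\ln\gamma$, we obtain $\varpi_1(\gamma)\sim(-\ln\gamma)^{1-\lambda}\sim\varpi_{\mathrm{LH}}^{\lambda-1}(\gamma)$. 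For $i=2$ I split on $\tau$. If $\tau\notin\mathbb{N}^+$ then $k_2^*-\tau=1-\{\tau\}\in(0,1)$, so $\int_L^{\varepsilon}\varphi_2(t)\,t^{-k_2^*-2}\,{\rm d}t\sim L^{\{\tau\}-1}(-\ln L)^{-\lambda}$ while $\int_0^{L}\varphi_2(t)\,t^{-k_2^*-1}\,{\rm d}t\sim L^{\{\tau\}}(-\ln L)^{-\lambda}$, and balancing gives simply $L_2(\gamma)\sim\gamma$, hence $\varpi_2(\gamma)\sim\gamma^{\{\tau\}}(-\ln\gamma)^{-\lambda}\sim\gamma^{\{\tau\}}\varpi_{\mathrm{LH}}^{\lambda}(\gamma)$. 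If $\tau\in\mathbb{N}^+$ then $\{\tau\}=0$, $k_2^*=\tau+1$, and the relevant integrals coincide with those of the $i=1$ case, so again $L_2(\gamma)\sim\gamma/(-\ln\gamma)$ and $\varpi_2(\gamma)\sim\varpi_{\mathrm{LH}}^{\lambda-1}(\gamma)$ — which is the origin of the dichotomy in the statement. The main technical obstacle I anticipate is precisely this last step: confirming that the slowly varying logarithmic weights do not disturb the leading-order asymptotics of the one-sided integrals, that the chosen $L_i(\gamma)$ genuinely realize the two-sided $\mathcal{O}^{\#}$ comparison demanded in \eqref{varpii}, and that the resulting $\varpi_i$ are legitimate moduli of continuity (monotone, semi separable, weakly homogeneous); once these asymptotics are in hand, the invocation of Main Theorem \ref{theorem1} is immediate.
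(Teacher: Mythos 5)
Your proposal is correct and follows essentially the same route as the paper: verify (H1) to invoke Part I of Main Theorem \ref{theorem1}, compute $\varphi_1(x)\sim x(-\ln x)^{-\lambda}$, $\varphi_2(x)\sim x^{\tau+1}(-\ln x)^{-\lambda}$ to fix $k_1^*=1$, $k_2^*=[\tau+1]$ in (H5), and then balance the two integrals in \eqref{varpii} with exactly the same choices $L_1(\gamma)\sim\gamma/(-\ln\gamma)$ (also $L_2$ when $\tau\in\mathbb{N}^+$) and $L_2(\gamma)\sim\gamma$ when $\tau\notin\mathbb{N}^+$. The only cosmetic difference is that you derive the one-sided asymptotics directly instead of quoting the paper's appendix Lemmas \ref{duochongduishu} and \ref{erheyi}, which give the same estimates.
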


\subsubsection{H\"{o}lder plus generalized Logarithmic H\"{o}lder continuous case}
Actually, in view of the Dini type integrability condition (H1), one notices that the regularity for the given  Hamiltonian function could be further weakened, such as the generalized Logarithmic H\"{o}lder continuity in \eqref{mafan}. But the parameter $\lambda>0 $ needs to be  appropriately chosen, otherwise it may contradict \eqref{Dini}. We will only present the following Theorem \ref{GLHnew} for the case where Diophantine index $ \tau  $ is an integer for the sake of simplicity. One could deal with $ \tau \notin \mathbb{N}^+ $ similar to that in Theorem \ref{lognew}.

\begin{theorem}\label{GLHnew}
		Let $ n-1<\tau\in \mathbb{N}^+ $, and  $ H(x,y)\in C_{2\tau+2, {\varpi_{\mathrm{GLH}}^{\varrho,\lambda}}} $ with $\varrho\in \mathbb{N}^+, \lambda  > 1 $. That is, all $ (2\tau+2) $-order derivatives of $ H(x,y) $ admit generalized Logarithmic H\"{o}lder continuity:
\begin{equation}\label{mafan2}
	\varpi_{\mathrm{GLH}}^{\varrho,\lambda} \left( x \right) \sim \frac{1}{{(\ln (1/x))(\ln \ln (1/x)) \cdots {{(\underbrace {\ln  \cdots \ln }_\varrho (1/x))}^\lambda }}},\;\;x \to 0^+.
\end{equation}
		 Assume (H2), (H3) and (H4).  Then there is a solution $ x = u\left( \xi  \right),y = v\left( \xi  \right) $	of the following equation with the operator $ D: = \sum\limits_{\nu  = 1}^n {{\omega _\nu }\frac{\partial }{{\partial {\xi _\nu }}}} = \omega  \cdot {\partial _\xi } $
	\begin{equation}\notag
		Du = {H_y}\left( {u,v} \right),\;\;Dv =  - {H_x}\left( {u,v} \right)
	\end{equation}
	such that $ u\left( \xi  \right) - \xi  $ and $ v\left( \xi  \right) $ are of period $ 1 $ in all variables. Besides, the remaining regularity in Theorem \ref{theorem1} is $ u \in {C_{1 ,{\varpi _1}}}\left( {{\mathbb{R}^n},{\mathbb{R}^n}} \right) $ and $ v \circ {u^{ - 1}} \in {C_{\tau+1,{\varpi _2}}}\left( {{\mathbb{R}^n},G} \right) $ with modulus of continuity
	\begin{equation}\label{rem}
		{\varpi _1}\left( x \right) \sim {\varpi _2}\left( x \right) \sim \frac{1}{{{{(\underbrace {\ln  \cdots \ln }_\varrho (1/x))}^{\lambda  - 1}}}}.
	\end{equation}
\end{theorem}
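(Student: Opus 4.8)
The plan is to invoke Main Theorem \ref{theorem1} directly, so that the whole argument reduces to two tasks: (i) verifying hypotheses (H1) and (H5) for the prescribed modulus of continuity $\varpi = \varpi_{\mathrm{GLH}}^{\varrho,\lambda}$ with $k=2\tau+2$, $n-1<\tau\in\mathbb{N}^+$; and (ii) carrying out the asymptotic analysis of the integral formula \eqref{varpii} to extract the explicit form \eqref{rem} of $\varpi_1$ and $\varpi_2$. The hypotheses (H2), (H3), (H4) are assumed outright, and (C1) tells us that $\varpi$ is convex near $0^+$ and hence automatically semi separable and weakly homogeneous, so no extra structural checks are needed. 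For (H1) the integrand in \eqref{Dini} is $\varpi(x)\,x^{k-2\tau-3}=\varpi(x)/x$ since $k=2\tau+2$; thus one must check $\int_0^1 \varpi_{\mathrm{GLH}}^{\varrho,\lambda}(x)/x\,\mathrm{d}x<+\infty$. Substituting $t=\ln(1/x)$ reduces this to a nested-logarithm integral of Bertrand type, which converges precisely when the outermost exponent $\lambda>1$ — exactly the standing hypothesis. This is also why $\lambda>1$ (rather than $\lambda>0$) is required, as remarked before the statement.

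Next I would identify the critical integers $k_1^*,k_2^*$ of (H5). Here $\varphi_i(x)=x^{k-(3-i)\tau-1}\varpi(x)$, so with $k=2\tau+2$ one gets $\varphi_1(x)=x^{\tau}\varpi(x)$ and $\varphi_2(x)=x^{\tau+1}\varpi(x)$. The defining property of $k_i^*$ is that $\int_0^1 \varphi_i(x)x^{-(k_i^*+1)}\mathrm{d}x<+\infty$ while $\int_0^1 \varphi_i(x)x^{-(k_i^*+2)}\mathrm{d}x=+\infty$. For $\varphi_1$: the integrand $x^{\tau-m}\varpi(x)$ is integrable near $0^+$ for every $m\le\tau$ (no singularity) and, at $m=\tau+1$, becomes $\varpi(x)/x$ which is integrable since $\lambda>1$; but at $m=\tau+2$ it is $\varpi(x)/x^2$, which diverges because $\varpi$ tends to a positive-order-preserving quantity slower than any positive power. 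Hence $k_1^*=\tau+1$. The same reasoning applied to $\varphi_2(x)=x^{\tau+1}\varpi(x)$ gives $k_2^*=\tau+1$ as well (the extra factor of $x$ shifts the critical exponent up by one, but so does the extra $x$ in $\varphi_2$). One must double-check the borderline powers carefully — this is where the convexity/asymptotic bookkeeping must be clean — but both come out to $\tau+1$, consistent with the claimed target spaces $C_{1,\varpi_1}$ (note $k_1^*-2\tau-1=1$ after the small-divisor loss built into \eqref{varpii}) and $C_{\tau+1,\varpi_2}$.

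The main obstacle, and the step demanding genuine work rather than bookkeeping, is the asymptotic evaluation in \eqref{varpii}: choosing the cutoff functions $L_i(\gamma)\to 0^+$ so that the equivalence
\[
\varpi_i(\gamma)\sim\gamma\int_{L_i(\gamma)}^{\varepsilon}\frac{\varphi_i(t)}{t^{k_i^*+2}}\,\mathrm{d}t=\mathcal{O}^{\#}\!\left(\int_0^{L_i(\gamma)}\frac{\varphi_i(t)}{t^{k_i^*+1}}\,\mathrm{d}t\right)
\]
holds, and then reading off a closed asymptotic form. With $k_i^*=\tau+1$ and $\varphi_i$ as above, the first integral has integrand $\sim\varpi(t)/t^{2}$ (for $i=1$) or $\sim\varpi(t)/t$ (for $i=2$), up to the power mismatch; after the $t\mapsto\ln(1/t)$ substitution these become Bertrand integrals $\int (\text{log-factors})^{-\lambda}$, whose tails behave like $(\text{outer log})^{-(\lambda-1)}$. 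Balancing the two sides of the $\mathcal{O}^{\#}$ relation forces a choice of $L_i(\gamma)$ on a logarithmic scale (roughly $L_i(\gamma)$ comparable to a power of $\gamma$ up to log corrections), and substituting back yields $\varpi_i(\gamma)\sim 1/\big(\underbrace{\ln\cdots\ln}_{\varrho}(1/\gamma)\big)^{\lambda-1}$, which is \eqref{rem}. The delicate points are: controlling the nested logarithms uniformly so the $\mathcal{O}^{\#}$ (two-sided) bound survives, checking that the resulting $\varpi_i$ are themselves genuine moduli of continuity (nondecreasing, vanishing at $0^+$, with $\overline{\lim}\,x/\varpi_i(x)<\infty$), and confirming $k_1^*$ after the small-divisor loss indeed lands at $1$ so that Part II of Theorem \ref{theorem1} gives $u\in C_{1,\varpi_1}$ and $v\circ u^{-1}\in C_{\tau+1,\varpi_2}$. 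The final step is simply to record that $\varpi_1=\varpi_2$ here because $\varphi_1$ and $\varphi_2$ differ by one power of $t$ and $k_1^*=k_2^*$, so the two integral expressions in \eqref{varpii} coincide asymptotically.
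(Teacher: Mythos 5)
Your overall strategy coincides with the paper's: verify (H1), determine the critical exponents $k_i^*$ of (H5), then choose $L_i(\gamma)$ in \eqref{varpii} and read off $\varpi_i$ asymptotically. However, there is a concrete arithmetic error that derails the middle step. With $k=2\tau+2$ one has $\varphi_1(x)=x^{\,k-2\tau-1}\varpi(x)=x\,\varpi(x)$, not $x^{\tau}\varpi(x)$ as you wrote; consequently the criticality conditions $\int_0^1\varphi_1(x)x^{-(k_1^*+1)}\,\mathrm{d}x<+\infty$, $\int_0^1\varphi_1(x)x^{-(k_1^*+2)}\,\mathrm{d}x=+\infty$ reduce to $\int_0^1\varpi(x)/x\,\mathrm{d}x<+\infty$ and $\int_0^1\varpi(x)/x^2\,\mathrm{d}x=+\infty$, giving $k_1^*=1$ (and, with $\varphi_2(x)=x^{\tau+1}\varpi(x)$, $k_2^*=\tau+1$), exactly as the paper states. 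Your value $k_1^*=\tau+1$ is incompatible with the conclusion, and the attempted repair via a ``small-divisor loss built into \eqref{varpii}'' is not available: Part II of Theorem \ref{theorem1} asserts $u\in C_{k_1^*,\varpi_1}$ with that very $k_1^*$, so no further subtraction of $2\tau+1$ is performed after (H5). (Even internally, with your $\varphi_1=x^{\tau}\varpi$ the criticality test would give $k_1^*=\tau$, not $\tau+1$.) Your closing claim that $\varpi_1\sim\varpi_2$ ``because $k_1^*=k_2^*$'' is likewise based on the miscomputation; the true reason is that the shift $k_2^*-k_1^*=\tau$ exactly matches the extra power $t^{\tau}$ in $\varphi_2/\varphi_1$, so both integrands in \eqref{varpii} reduce to $\varpi(t)/t$ and $\varpi(t)/t^2$ respectively.

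The second gap is that the decisive asymptotic evaluation is only described, not performed. The paper computes, via its Lemma \ref{duochongduishu},
\begin{equation*}
\gamma\int_{L_i(\gamma)}^{\varepsilon}\frac{\varphi_i(t)}{t^{k_i^*+2}}\,\mathrm{d}t
={\mathcal{O}^{\#}}\Bigg(\frac{\gamma}{L_i(\gamma)\,(\ln(1/L_i(\gamma)))\cdots(\underbrace{\ln\cdots\ln}_{\varrho}(1/L_i(\gamma)))^{\lambda}}\Bigg),\qquad
\int_0^{L_i(\gamma)}\frac{\varphi_i(t)}{t^{k_i^*+1}}\,\mathrm{d}t
={\mathcal{O}^{\#}}\Bigg(\frac{1}{(\underbrace{\ln\cdots\ln}_{\varrho}(1/L_i(\gamma)))^{\lambda-1}}\Bigg),
\end{equation*}
and then exhibits the explicit choice $L_i(\gamma)\sim\gamma/\big((\ln(1/\gamma))\cdots\underbrace{\ln\cdots\ln}_{\varrho}(1/\gamma)\big)$, which makes the two sides comparable and yields \eqref{rem}. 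Your proposal stops at ``balancing the two sides forces a choice of $L_i$ on a logarithmic scale,'' which is precisely the step that needs to be carried out (and which, with your incorrect $k_1^*$ and $\varphi_1$, would produce divergent integrals rather than \eqref{rem}). The verification of (H1) via the Bertrand-type substitution and the appeal to convexity for semi separability and weak homogeneity are fine, but the core of the proof as written does not go through without redoing the $\varphi_1$, $k_1^*$ computation and the explicit $L_i$ asymptotics.
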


\begin{remark}
	Particularly \eqref{mafan2} reduces to the Logarithmic H\"older's type $ \varpi_{\mathrm{LH}}^{\lambda}(x) \sim 1/{(-\ln x)^\lambda} $ with $ \lambda>1 $ as long as $ \varrho=1 $. As can be seen that, the remaining regularity in \eqref{rem} is much weaker than the initial regularity in \eqref{mafan2}, and it is indeed very weak if $ \lambda>1 $ is sufficiently close to $ 1 $ (but cannot degenerate to $ 1 $ by (H1)), or $ \varrho \in \mathbb{N}^+ $ is sufficiently large, because the explicit modulus of continuity in \eqref{rem} tends to $ 0 $ quite slowly as $ x \to 0^+ $.
\end{remark}

\subsection{Two explicit Hamiltonian systems of Logarithmic H\"{o}lder's type}\label{explicitexa}
To illustrate the wider applicability of our theorems, we shall present two explicit examples strictly beyond H\"older's type, involving \textit{non-H\"older corner's type} and even \textit{nowhere H\"older's type}. Note that  the H\"older plus Logarithmic H\"older regularity for $ H(x,y) $ in Theorem \ref{lognew} becomes simpler Logarithmic H\"older's type  for $ 2n<2\tau+2 \in \mathbb{N}^+ $ (because $ \{2 \tau+2 \}=0 $), we therefore consider the following \textit{critical settings} (with sharp differentiability) throughout this section.

Recall Theorem \ref{lognew}. Let $ n = 2,\tau  = 2, k = 6=[2\tau+2], {\alpha _ * } > 0,\lambda  > 1$ and $M > 0 $ be  given. Assume that $ \left( {x,y} \right) \in {\mathbb{T}^2} \times G $ with $ G := \{ {y \in {\mathbb{R}^2}:\left| y \right| \leq 1} \} $, and the universal Diophantine frequency $ \omega  = {\left( {{\omega _1},{\omega _2}} \right)^{\top}} \in \mathbb{R}^2 $ satisfies
\begin{equation}\notag
	| {\langle {{\tilde k},\omega } \rangle } | \geq \alpha_ *{ | {{\tilde k }} |}^{-2} ,\;\;\forall 0 \ne \tilde k \in {\mathbb{Z}^2},\;\;|\tilde k|: = |k_1|+|k_2|,
\end{equation}
i.e., with full Lebesgue measure.
\subsubsection{Non-H\"older corner's type}\label{SUBSUB1}
 Now we shall construct a function for finite smooth  perturbations via \textit{non-H\"older corner's type}, whose regularity is $ C^6 $ plus Logarithmic H\"older's type $ \varpi_{\mathrm{LH}}^{\lambda}(r) \sim 1/(-\ln r)^{\lambda} $ with index $ \lambda>1 $. In other words, the highest order derivative  admits exact  Logarithmic H\"older regularity at a non-H\"older corner, but it may have good regularity away from this corner, such as being  sufficiently smooth.  Namely, define
\begin{equation}\notag
	\mathcal{P}(r): =  \left\{ \begin{aligned}
		&{{\int_0^r { \cdots \int_0^{{s_2}} {\frac{1}{{{{(1 - \ln \left| {{s_1}} \right|)}^\lambda }}}{\rm d}{s_1} \cdots {\rm d}{s_6}} }}},&{0 < \left| r \right| \leq 1} \hfill, \\
		&{0},&{r=0} \hfill. \\
	\end{aligned}  \right.
\end{equation}
Obviously $ \mathcal{P}(r)\in C_{6,\varpi_{\mathrm{LH}}^{\lambda}} ([-1,1])$.
Let us consider the perturbed Hamiltonian function below with some constant $ 0 < {\varepsilon } < {\varepsilon ^ * } $ sufficiently small ($ {\varepsilon ^ * } $ depends on the constants given before):
\begin{equation}\label{HH}
	\mathcal{H}(x,y,\varepsilon ) = {\omega _1}{y_1} + {\omega _2}{y_2} + \frac{1}{M}(y_1^2 + y_2^2) + \varepsilon \left( {\sin (2\pi {x_1}) + \sin (2\pi {x_2}) + \mathcal{P}({y_1}) + \mathcal{P}\left( {{y_2}} \right)} \right).
\end{equation}
At this point, we have
\begin{align*}
	\left| {{{\left( {\int_{{\mathbb{T}^2}} {{\mathcal{H}_{yy}}\left( {\xi ,0} \right){\rm d}\xi } } \right)}^{ - 1}}} \right| &= \left| {{{\left( {\int_{{\mathbb{T}^2}} {\left( {\begin{array}{*{20}{c}}
								{2{M^{ - 1}}}&0 \\
								0&{2{M^{ - 1}}}
						\end{array}} \right){\rm d}\xi } } \right)}^{ - 1}}} \right| \notag \\
	&= \left| {\left( {\begin{array}{*{20}{c}}
				{{2^{ - 1}}M}&0 \\
				0&{{2^{ - 1}}M}
		\end{array}} \right)} \right| \leq M <  + \infty .
\end{align*}
In addition, one can verify that $ \mathcal{H}(x,y,\varepsilon) \in {C_{6,{\varpi_{\mathrm{LH}}^{\lambda}}}}( {{\mathbb{T}^2} \times G} ) $ with $ \varpi_{\mathrm{LH}}^{\lambda}(r) \sim 1/(-\ln r)^{\lambda} $.

However, for $ \tilde \alpha  = {\left( {0,0,6,0} \right)^{\top}} $ with $ \left| {\tilde \alpha } \right| = 6 = k $, we have
\[\left| {{\partial ^{\tilde \alpha }}\mathcal{H}\left( {{{\left( {0,0} \right)}^{\top}},{{\left( {{y_1},0} \right)}^{\top}},\varepsilon } \right) - {\partial ^{\tilde \alpha }}\mathcal{H}\left( {{{\left( {0,0} \right)}^{\top}},{{\left( {0,0} \right)}^{\top}},\varepsilon } \right)} \right| = \frac{\varepsilon }{{{{(1 - \ln \left| {{y_1}} \right|)}^\lambda }}} \geq \varepsilon {c_{\lambda ,\ell }}{\left| {{y_1}} \right|^\ell }\]
for any $ 0<\ell\leq1 $, where $ c_{\lambda ,\ell } >0$ is a constant that only depends on $ \lambda $ and $ \ell $. This implies that $ \mathcal{H}(x,y,\varepsilon) \notin {C_{6,{\varpi _{\mathrm{H}}^\ell}}}( {{\mathbb{T}^2} \times G} )  $ with $ {\varpi _{\mathrm{H}}^\ell}(r) \sim {r^\ell } $, i.e., $ \mathcal{H}(x,y,\varepsilon) \notin C^{6+\ell}( {{\mathbb{T}^2} \times G} ) $ with any $ 0<\ell \leq 1 $, because $ \varpi_{\mathrm{LH}}^{\lambda} $ is strictly weaker than $ \varpi _{\mathrm{H}}^\ell $, see also Remark  \ref{strict} and Corollary \ref{V37-Re2.3}.

In other words, the highest order  derivatives (of order $ k=6 $) of $ H(x,y) $ in \eqref{HH} can be rigorously proved to be Logarithmic H\"{o}lder continuous with index $ \lambda>1 $, but not any  H\"{o}lder's type. Therefore, the finite smooth KAM theorems via classical H\"{o}lder continuity cannot be applied. But, all the assumptions of  Theorem \ref{lognew} can be verified to be satisfied, then the invariant torus persists, and the frequency $ \omega  = {\left( {{\omega _1},{\omega _2}} \right)^{\top}} $ for the unperturbed system can remain unchanged. Moreover, the remaining regularity for mappings $ u $ and $ v \circ u^{-1} $ in Theorem \ref{lognew} could also be determined as $ u \in {C_{1 ,{\varpi _\mathrm{LH}^{\lambda-1}}}}\left( {{\mathbb{R}^n},{\mathbb{R}^n}} \right) $   and $ v \circ {u^{ - 1}} \in {C_{3 ,{\varpi _\mathrm{LH}^{\lambda-1}}}}\left( {{\mathbb{R}^n},G} \right) $, where $ \varpi _\mathrm{LH}^{\lambda-1}(r)\sim 1/(-\ln r)^{\lambda-1} $. More precisely,  $ u $ is at least $ C^1 $, while $ v \circ u^{-1}$ is least $ C^3 $, and the higher regularity for them is still not any H\"older's type, but Logarithmic H\"older one with index $ \lambda-1 $, i.e., lower than the original index $ \lambda>1 $, this is because the small divisors causes the loss of regularity. In KAM terminology, the regularity of  invariant torus with frequency-preserving is $ C^3 $ plus $ (\lambda-1) $-Logarithmic H\"older, while the conjugation from the dynamic on it to the linear flow is $ C^1 $ plus $ (\lambda-1) $-Logarithmic H\"older, see Remark \ref{xiangxi}.

 \subsubsection{Nowhere H\"older's type}
 Although an explicit irregular case has been discussed in Section \ref{SUBSUB1}, it is not universal enough. Notice that the non-integrable part and perturbations for Hamiltonian should at least be of $ C^6 $ under our results (see Theorem \ref{theorem1} and Remark \ref{RemarkM2}). However, if perturbations are selected at random from the function space $ C^6 $, then the derivatives of $ 6 $-order are likely to be \textit{nowhere-differentiable}  (note that the example given in Section \ref{SUBSUB1} is $ C^\infty $ except for the corner point), or even \textit{nowhere H\"older} from the point of view of Baire category. More precisely, perturbations may be convergent trigonometric series composed of \textit{high frequency oscillations}. This is also one of our motivations for establishing KAM in the sense of modulus of continuity. We shall focus on this case below, aiming to  show the strength of our results.

 Given $ 0<q<\frac{1}{3} $ and $ \lambda>1 $. Define a sequence $ \{\mathcal{Q}_n\}_{n \in \mathbb{N}^+} $ that satisfies $ \left\{ {{\mathcal{Q}_{n + 1}}\mathcal{Q}_n^{-1},{\mathcal{Q}_n}} \right\}_{n \in \mathbb{N}^+}  \subseteq  10{\mathbb{N}^ + } $ and  $ \mathcal{Q}_n=\mathcal{O}^{\#}\left(\exp(\Theta^n)\right) $ with $ \Theta=\left({\frac{1}{q}}\right)^{\frac{1}{\lambda}}>1 $. 	Then the $ 1 $-periodic function 
 	\begin{equation}\label{mathscrP}
 		\mathscr{P}\left( x \right): = \sum\limits_{n = 1}^\infty  {{q^n}\sin \left( {\pi {\mathcal{Q}_n}x} \right)} ,\;\;x \in \mathbb{R}
 	\end{equation}
is well defined. We shall construct a Hamiltonian system using $ \mathscr{P} $ as the highest order derivatives for the perturbation. To this end,  consider the following Hamiltonian with \textit{high frequency oscillation perturbation}:
 \begin{align}
 	\mathscr{H}(x,y,\varepsilon ) &= {\omega _1}{y_1} + {\omega _2}{y_2} + \left\langle {\mathscr{A}\left( {{x_1},{x_2}} \right){{\left( {{y_1},{y_2}} \right)}^\top},{{\left( {{y_1},{y_2}} \right)}^\top}} \right\rangle  \notag \\
 	 &\;\;\;\;+ \varepsilon \left( \sum\limits_{n = 1}^\infty  {\frac{{{q^n}}}{{\mathcal{Q}_n^6}}\sin \left( {\pi {\mathcal{Q}_n}{x_1}} \right)}  + \sum\limits_{n = 1}^\infty  {\frac{{{q^n}}}{{\mathcal{Q}_n^6}}\sin \left( {\pi {\mathcal{Q}_n}{y_2}} \right)}  \right)\notag \\
 \label{HHH}	:&=\mathscr{N}_\mathscr{H}+\varepsilon \mathscr{P}_\mathscr{H},
 \end{align}
 provided with some constant $ 0 < {\varepsilon } < {\varepsilon ^ * } $ sufficiently small, and $ {\varepsilon ^ * } $ depends on the constants given before. Here the matrix $ \mathscr{A}(x_1,x_2) $ could  only be of class $ C_{6,\varpi_{\mathrm{LH}}^\lambda} $ (recall Remark \ref{RemarkM2}), and assume that (H2) holds. At this case, the unperturbed part $ \mathscr{N}_{\mathscr{H}} $ of $ \mathscr{H} $ is indeed $ C_{6,\varpi_{\mathrm{LH}}^\lambda} $ finitely smooth and non-integrable. Let us point out that, the non-integrable  part could have weaker regularity, i.e., 
 \begin{align*}
 		\tilde{\mathscr{H}}(x,y,\varepsilon ) &= {\omega _1}{y_1} + {\omega _2}{y_2} + \left\langle \left(\mathscr{A} \left(x_1,x_2\right)+{\tilde{\varepsilon}}{\tilde{\mathscr{A}}} \left(x_1,x_2\right)\right){{{\left( {{y_1},{y_2}} \right)}^\top},{{\left( {{y_1},{y_2}} \right)}^\top}} \right\rangle\notag \\
 	&\;\;\;\;+ \varepsilon \left( \sum\limits_{n = 1}^\infty  {\frac{{{q^n}}}{{\mathcal{Q}_n^6}}\sin \left( {\pi {\mathcal{Q}_n}{x_1}} \right)}  + \sum\limits_{n = 1}^\infty  {\frac{{{q^n}}}{{\mathcal{Q}_n^6}}\sin \left( {\pi {\mathcal{Q}_n}{y_2}} \right)}  \right) \notag \\
 	&\;\;\;\;-{\tilde{\varepsilon}}\left\langle {\mathscr{A}\left( {{x_1},{x_2}} \right){{\left( {{y_1},{y_2}} \right)}^\top},{{\left( {{y_1},{y_2}} \right)}^\top}} \right\rangle ,
 \end{align*}
 where $ {\tilde{\varepsilon}}\tilde{\mathscr{A}} $ and $ {\tilde{\varepsilon}}\bar{\mathscr{A}} $ are small perturbations of $ \mathscr{A} $, admitting weaker regularity than $ C_{6,\varpi_{\mathrm{LH}}^\lambda} $, e.g., $ C^5 $, and $ \tilde{\mathscr{A}}-\bar{\mathscr{A}} $ \textit{must} be of at least $ C_{6,\varpi_{\mathrm{LH}}^\lambda} $. The analyzability of this  case  is due to our technique for dealing with finite smooth KAM in this paper, but obviously, the new perturbation of $ \tilde{\mathscr{H}} $ is quite special.

Back to our concern on Hamiltonian \eqref{HHH}.  Now, the universal frequency-preserving KAM persistence and remaining regularity same as that in Section \ref{SUBSUB1} could be obtained if one proves that the perturbation is also $ C_{6,\varpi_{\mathrm{LH}}^\lambda} $. However, this is not simple. The following proposition verifies this claim, but simultaneously shows that it is indeed nowhere H\"older continuous (and therefore, the H\"older type KAM tools cannot be used at all, at any point). As you will see later, the technique used here is somewhat similar to that in Abstract Iterative Theorem \ref{t1}.

 \begin{proposition}
 The perturbation part $ \varepsilon \mathscr{P}_\mathscr{H} $ of Hamiltonian $ \mathscr{H} $ in \eqref{HHH} is at least $ C^6 $, and the regularity for $ 6 $-order derivatives is $ \lambda $-Logarithmic  H\"older's type, but is nowhere H\"older continuous.
 \end{proposition}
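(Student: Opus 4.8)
The plan is to reduce the whole statement to the scalar lacunary series $\mathscr{P}(x)=\sum_{n\ge 1}q^{n}\sin(\pi\mathcal{Q}_{n}x)$. The perturbation $\varepsilon\mathscr{P}_{\mathscr{H}}$ is, up to the factor $\varepsilon$, a sum of two functions each depending on a single variable, namely $\sum_{n}\mathcal{Q}_{n}^{-6}q^{n}\sin(\pi\mathcal{Q}_{n}\,x_1)$ and $\sum_{n}\mathcal{Q}_{n}^{-6}q^{n}\sin(\pi\mathcal{Q}_{n}\,y_2)$. Differentiating term by term, the $j$-th derivative of a summand has size $\pi^{j}q^{n}\mathcal{Q}_{n}^{\,j-6}$; for $j\le 5$ this is bounded by $\pi^{5}q^{n}\mathcal{Q}_{n}^{-1}$, whose sum is finite, so the series for all derivatives of order $\le 5$ converge absolutely and uniformly, while the formally sixth-differentiated series equals $-\pi^{6}\sum_{n}q^{n}\sin(\pi\mathcal{Q}_{n}\,\cdot)=-\pi^{6}\mathscr{P}(\cdot)$, which converges uniformly since $\sum_{n}q^{n}<+\infty$. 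Hence $\varepsilon\mathscr{P}_{\mathscr{H}}\in C^{6}(\mathbb{T}^{2}\times G)$, every sixth-order partial derivative is either identically $0$ or equals $\pm\varepsilon\pi^{6}\mathscr{P}$ evaluated at one coordinate, and the entire regularity question is transferred to the modulus of continuity of $\mathscr{P}$.

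For the upper bound I would fix $h=|x-\xi|$ small and split the series at $N=N(h)$, the largest index with $\mathcal{Q}_{N}\le 1/h$. For $n\le N$ I use $|\sin(\pi\mathcal{Q}_{n}x)-\sin(\pi\mathcal{Q}_{n}\xi)|\le\pi\mathcal{Q}_{n}h$; since the ratios $q\,\mathcal{Q}_{n+1}/\mathcal{Q}_{n}=10q\,j_{n}$ (writing $\mathcal{Q}_{n+1}=10j_{n}\mathcal{Q}_{n}$) blow up because $\mathcal{Q}_{n+1}/\mathcal{Q}_{n}\sim\exp(\Theta^{n}(\Theta-1))$, the sequence $q^{n}\mathcal{Q}_{n}$ is eventually super-increasing, so $\sum_{n\le N}q^{n}\mathcal{Q}_{n}h=\mathcal{O}(q^{N}\mathcal{Q}_{N}h)+\mathcal{O}(h)=\mathcal{O}(q^{N})$ using $h\mathcal{Q}_{N}\le 1$ and $h=o(q^{N})$. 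For $n>N$ I use $|\sin|\le 1$ to get $\mathcal{O}(q^{N+1})$. It remains to express $q^{N}$ through $h$: from $\mathcal{Q}_{n}=\mathcal{O}^{\#}(\exp(\Theta^{n}))$ one gets $\Theta^{N}=\mathcal{O}^{\#}(-\ln h)$, and since $\ln q=-\lambda\ln\Theta$ (this is exactly the choice $\Theta=(1/q)^{1/\lambda}$), $q^{N}=(\Theta^{N})^{-\lambda}=\mathcal{O}^{\#}\big((-\ln h)^{-\lambda}\big)$. Therefore $|\mathscr{P}(x)-\mathscr{P}(\xi)|=\mathcal{O}\big((-\ln|x-\xi|)^{-\lambda}\big)=\mathcal{O}\big(\varpi_{\mathrm{LH}}^{\lambda}(|x-\xi|)\big)$; lifting back (the sixth-order derivatives depend on a single coordinate and $\varpi_{\mathrm{LH}}^{\lambda}$ is nondecreasing) yields $\varepsilon\mathscr{P}_{\mathscr{H}}\in C_{6,\varpi_{\mathrm{LH}}^{\lambda}}(\mathbb{T}^{2}\times G)$.

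For the nowhere-Hölder lower bound I argue pointwise: fix $x_{0}$ and $\alpha\in(0,1)$, and for each large $N$ take an increment $h_{N}=2m_{N}/\mathcal{Q}_{N+1}$ with $m_{N}\in\mathbb{Z}$. Since $\mathcal{Q}_{N+1}\mid\mathcal{Q}_{n}$ for all $n\ge N+1$ (this is where the arithmetic condition $\{\mathcal{Q}_{n+1}\mathcal{Q}_{n}^{-1},\mathcal{Q}_{n}\}\subseteq 10\mathbb{N}^{+}$ enters), every term with $n\ge N+1$ has $\mathcal{Q}_{n}h_{N}\in 2\mathbb{Z}$ and drops out of $\mathscr{P}(x_{0}+h_{N})-\mathscr{P}(x_{0})$. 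The $N$-th summand's phase shift is $\pi\mathcal{Q}_{N}h_{N}=\pi m_{N}/(5j_{N})$, and as $m_{N}$ ranges over $\{0,\dots,10j_{N}-1\}$ these shifts form an equispaced grid of mesh $\le\pi/5$ in $[0,2\pi)$; an elementary estimate shows that for any phase $a$ some grid point $m_{N}^{*}$ gives $|\sin(a+\pi m_{N}^{*}/(5j_{N}))-\sin a|\ge c_{0}$ for an absolute $c_{0}>0$, while keeping $|h_{N}|\le 2/\mathcal{Q}_{N}$. The low-frequency remainder $n\le N-1$ contributes at most $\pi|h_{N}|\sum_{n\le N-1}q^{n}\mathcal{Q}_{n}=\mathcal{O}(q^{N-1}\mathcal{Q}_{N-1}/\mathcal{Q}_{N})+\mathcal{O}(1/\mathcal{Q}_{N})=o(q^{N})$, again by super-lacunarity ($\mathcal{Q}_{N-1}/\mathcal{Q}_{N}\to 0$) and by $q^{N}\mathcal{Q}_{N}\to+\infty$. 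Hence $|\mathscr{P}(x_{0}+h_{N})-\mathscr{P}(x_{0})|\ge \tfrac{c_{0}}{2}q^{N}$ for $N$ large, so
\[
\frac{|\mathscr{P}(x_{0}+h_{N})-\mathscr{P}(x_{0})|}{|h_{N}|^{\alpha}}\;\ge\;\frac{c_{0}}{2^{1+\alpha}}\,q^{N}\mathcal{Q}_{N}^{\alpha},
\]
and since $\ln\big(q^{N}\mathcal{Q}_{N}^{\alpha}\big)=-\lambda N\ln\Theta+\alpha\Theta^{N}+\mathcal{O}(1)\to+\infty$ (the exponential term $\alpha\Theta^{N}$ beats the linear one for every fixed $\alpha>0$), the quotient is unbounded as $N\to\infty$. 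Thus $\mathscr{P}$ is not $\alpha$-Hölder at $x_{0}$; as $x_{0}$ and $\alpha$ were arbitrary, $\mathscr{P}$ — hence the sixth-order derivative $\pm\varepsilon\pi^{6}\mathscr{P}(x_{1})$ of $\varepsilon\mathscr{P}_{\mathscr{H}}$ read along the $x_{1}$-direction — is nowhere Hölder continuous.

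The main obstacle I expect is precisely the lower bound: engineering the increments $h_{N}$ so that simultaneously (i) all frequencies above $\mathcal{Q}_{N}$ are annihilated by the arithmetic condition on $\mathcal{Q}_{n}$, (ii) the isolated $N$-th term produces an increment of definite size (the equispaced-grid/sine-spread estimate), and (iii) the low-frequency remainder stays negligible — together with the bookkeeping that turns $\mathcal{Q}_{n}\asymp\exp(\Theta^{n})$ with $\Theta=q^{-1/\lambda}$ into exactly the rate $(-\ln)^{-\lambda}$ on the upper side and into genuine blow-up of every Hölder quotient on the lower side. This is the same interplay between lacunary gaps and regularity loss that underlies Abstract Iterative Theorem \ref{t1}.
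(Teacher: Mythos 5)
Your argument is correct, and its skeleton is the paper's: reduce everything by term-by-term differentiation to the scalar lacunary series $\mathscr{P}$ in \eqref{mathscrP}, prove the $\lambda$-Logarithmic H\"older bound by cutting the series at the largest $N$ with $h\mathcal{Q}_N\le 1$ (the paper's $\mathscr{N}_h$ in \eqref{huaNdingyi}), bounding the low block by the Mean Value Theorem and the tail trivially, and converting $q^{N}$ into $(-\ln h)^{-\lambda}$ through $\mathcal{Q}_N=\mathcal{O}^{\#}(\exp(\Theta^N))$ and $q=\Theta^{-\lambda}$; and use the divisibility $\{\mathcal{Q}_{n+1}\mathcal{Q}_n^{-1}\}\subseteq 10\mathbb{N}^+$ to isolate a single scale for the lower bound. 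Where you genuinely diverge is the nowhere-H\"older construction. The paper shifts $x$ by $\upsilon_{x,m}$ as in \eqref{upsilondy} so that $\mathcal{Q}_m(x+\upsilon_{x,m})$ becomes a half-integer: the $m$-th sine then jumps by at least $1$, the higher-frequency sines vanish at the shifted point only, and the surviving tail $\sum_{n>m}q^n\left|\sin(\pi\mathcal{Q}_n x)\right|$ is absorbed using the hypothesis $q<1/3$ (the positive coefficient $\frac{1-3q}{2(1-q)}$). You instead take increments $h_N\in(2/\mathcal{Q}_{N+1})\mathbb{Z}$, so the whole high-frequency tail cancels identically in the difference, and you extract a definite change of the $N$-th term by an equispaced-grid argument with the absolute constant $1-\pi/10$; this removes any need for the smallness of $q$ at that step (only summability and the superexponential gaps enter) at the modest price of the extra elementary grid lemma, while the paper's choice buys fully explicit constants and a shorter display. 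Your handling of the low-frequency block via the eventual super-increase of $q^n\mathcal{Q}_n$ (rather than the factor-$10$ ratio bookkeeping used around \eqref{3pi}) is also sound and in fact a slightly more robust way of dominating the same sum. Both routes reach the stated conclusion, including the lift from the single variable $x_1$ (resp. $y_2$) back to the sixth-order partial derivatives of $\varepsilon\mathscr{P}_\mathscr{H}$ on $\mathbb{T}^2\times G$.
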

 \begin{proof}
 By direct derivation, it suffices to verify that $ \mathscr{P}\left( x \right) $ defined in \eqref{mathscrP}  is $ \lambda $-Logarithmic  H\"older continuous, but is nowhere H\"older continuous.
 Obivously $ \mathscr{P}\left( x \right) $ is continuous on $ \mathbb{R} $ because 
 \[\sum\limits_{n = 1}^\infty  {\left| {{q^n}\sin \left( {\pi {\mathcal{Q}_n}x} \right)} \right|}  \leq \sum\limits_{n = 1}^\infty  {{q^n}}  <  + \infty .\]
 Then it immediately follows from Remark \ref{rema666} and the $ 1 $-periodicity of $ \mathscr{P} $ that $ \mathscr{P} $ automatically admits a modulus of continuity $ \varpi_{\mathscr{P}} $. We first prove the $ \lambda $-Logarithmic  H\"older continuity, i.e., $ \varpi_{\mathscr{P}}  $  is weaker than $ \varpi_{\rm LH}^\lambda \sim 1/{\left( { - \ln x} \right)^\lambda }$ for $ \lambda>1 $. For every fixed $ 0<h<1 $, define (it is indeed well defined because $ \mathcal{Q}_n \to +\infty $ as $ n \to +\infty $)
 \begin{equation}\label{huaNdingyi}
 	\mathscr{N}_h: = \max \left\{ {\mathscr{N}_h \in {\mathbb{N}^ + }:h {\mathcal{Q}_{\mathscr{N}_h}} \leq 1} \right\}.
 \end{equation}
 Now one derives
 \begin{equation}\label{jgjgz}
 	{q^{{\mathscr{N}_h}}} \leq C\left( {q,\lambda } \right)\varpi _{\rm LH}^\lambda \left( h \right),\;\; \forall 0<h \ll 1 
 \end{equation}
 from the assumptions on $ \{\mathcal{Q}_n\}_{n \in \mathbb{N}^+} $, where $ C\left( {q,\lambda } \right)>0 $ is a generic  constant that only depends on $ q$ and $ \lambda $. Therefore, for $ 0<h<1 $ sufficiently small, we obtain that
 \begin{align}
 	\left| {\mathscr{P}\left( {x + h} \right) - \mathscr{P}\left( x \right)} \right| &= \left| {\sum\limits_{n = 1}^\infty  {{q_n}\left( {\sin \left( {\pi {\mathcal{Q}_n}x + \pi {\mathcal{Q}_n}h} \right) - \sin \left( {\pi {\mathcal{Q}_n}x} \right)} \right)} } \right|\notag \\
 	& \leq \sum\limits_{n = 1}^{\mathscr{N}_h} { + \sum\limits_{n = \mathscr{N}_h + 1}^\infty  {{q_n}\left| {\sin \left( {\pi {\mathcal{Q}_n}x + \pi {\mathcal{Q}_n}h} \right) - \sin \left( {\pi {\mathcal{Q}_n}x} \right)} \right|} } \notag\\
 	\label{GZ1}	& \leq h\pi \sum\limits_{n = 1}^{\mathscr{N}_h} {{\mathcal{Q}_n}{q_n}}  + 2\sum\limits_{n = \mathscr{N}_h + 1}^\infty  {{q_n}} \\
 	& = h\pi {Q_{{\mathscr{N}_h}}}{q^{{\mathscr{N}_h}}}\sum\limits_{n = 1}^{{\mathscr{N}_h}} {\frac{{{Q_n}}}{{{Q_{{\mathscr{N}_h}}}}} \cdot \frac{{{q^n}}}{{{q_{{\mathscr{N}_h}}}}}}  + \sum\limits_{n = {\mathscr{N}_h} + 1}^\infty  {{q^n}}\notag \\
 	\label{GZ2}& \leq h\pi {Q_{{\mathscr{N}_h}}}{q^{{\mathscr{N}_h}}}\sum\limits_{n = 1}^{{\mathscr{N}_h}} {{{\left( {\frac{q}{{10}}} \right)}^{n - {\mathscr{N}_h}}}}  + \sum\limits_{n = {\mathscr{N}_h} + 1}^\infty  {{q^n}} \\
 	& \leq \frac{{10\pi }}{{10 - q}} \cdot h{Q_{{\mathscr{N}_h}}} \cdot {q^{{\mathscr{N}_h}}} + \frac{q}{{1 - q}}{q^{{\mathscr{N}_h}}}\notag\\
 	\label{GZ3}& \leq C\left( {q,\lambda } \right){q^{{\mathscr{N}_h}}} \\
 	\label{GZ4}& \leq C\left( {q,\lambda } \right)\varpi _{\rm LH}^\lambda \left( h \right),\;\;\forall x \in 
 	\mathbb{R},
 \end{align}
 where \eqref{GZ1} uses the Mean Value Theorem and $ \left| {\sin x} \right|\leq1 $, \eqref{GZ2} is due to $ \left\{ {{\mathcal{Q}_{n + 1}}}\mathcal{Q}_n^{-1} \right\}_{n \in \mathbb{N}^+}  \subseteq  10{\mathbb{N}^ + } $, \eqref{GZ3} employs the definition of $ \mathscr{N}_h $ in \eqref{huaNdingyi}, and finally \eqref{GZ4} follows from the estimate \eqref{jgjgz}.  This gives the claim asserting that $ \mathscr{P} (x)$ is $ \lambda $-Logarithmic H\"older with respect to $ \lambda>1 $.
 
 Next we will show that $ \mathscr{P}(x) $ is nowhere H\"older continuous on $ \mathbb{R} $. Let   $ \alpha>0 $ be arbitrary given, and consider $ \varpi_{\rm H}^{\alpha}\sim x^\alpha $ (arbitrary H\"older modulus of continuity). 	Fix $ x>0 $, and choose $ m=m_{x,\alpha} \in \mathbb{N}^+ $ sufficiently large such that $ {\mathcal{Q}_m}x \geq 1 $, and
 \begin{equation}\label{fanjieGZ}
 	{\mathcal{Q}_m} \geq {\left( {\varpi _{\rm H}^\alpha \left( {\frac{{1 - 3q}}{{2\left( {1 - q} \right)}}\frac{{{q^m}}}{m}} \right)} \right)^{ - 1}}.
 \end{equation}
 Note that this is achievable because $ \mathcal{Q}_m $ is of the superexponential order.
 Now it reads  $ {\mathcal{Q}_m}x = {\mathcal{N}_m} + {\mathcal{R}_m} $ with $ {\mathcal{N}_m} \in {\mathbb{N}^ + }$ and $0 < \left| {{\mathcal{R}_m}} \right| \leq {2^{ - 1}} $. Taking 
 \begin{equation}\label{upsilondy}
 	\upsilon_{x,m}   = \mathcal{Q}_m^{ - 1}\left( { - \operatorname{sgn} \left( {{\mathcal{R}_m}} \right){2^{ - 1}} - {\mathcal{R}_m}} \right)  \to 0
 \end{equation}
 as $ m \to \infty $ yields that $ {2^{ - 1}} \leq \left| {\upsilon_{x,m}} \right|{\mathcal{Q}_m} \leq 1 $. Then  we get
 \begin{align}
 	\frac{{\mathscr{P}\left( {x + \upsilon_{x,m}} \right) - \mathscr{P}\left( x \right)}}{{{\varpi_{\rm H}^{\alpha}}\left( {\left| {\upsilon_{x,m}} \right|} \right)}}	= &\frac{{{q^m}}}{{{\varpi_{\rm H}^{\alpha}}\left( {\left| {\upsilon_{x,m}} \right|} \right)}}\left( {\sin \left( {\pi {\mathcal{Q}_m}\left( {x + \upsilon_{x,m}} \right)} \right) - \sin \left( {\pi {\mathcal{Q}_m}x} \right)} \right) \notag\\
 	&+ \sum\limits_{n = 1}^{m - 1} {\frac{{{q^n}}}{{{\varpi_{\rm H}^{\alpha}}\left( {\left| {\upsilon_{x,m}} \right|} \right)}}\left( {\sin \left( {\pi {\mathcal{Q}_n}\left( {x + \upsilon_{x,m}} \right)} \right) - \sin \left( {\pi {\mathcal{Q}_n}x} \right)} \right)} \notag\\
 	& + \sum\limits_{n = m + 1}^\infty  {\frac{{{q^n}}}{{{\varpi_{\rm H}^{\alpha}}\left( {\left| {\upsilon_{x,m}} \right|} \right)}}\left( {\sin \left( {\pi {\mathcal{Q}_n}\left( {x + \upsilon_{x,m}} \right)} \right) - \sin \left( {\pi {\mathcal{Q}_n}x} \right)} \right)} \notag \\
 	\label{sanduan}	: = &{\mathscr{S}_1} + {\mathscr{S}_2} + {\mathscr{S}_3}.
 \end{align}
 Firstly, direct calculation as well as \eqref{upsilondy} gives
 \begin{align}
 	\left| {{\mathscr{S}_1}} \right| &= \frac{{{q^m}}}{{{\varpi_{\rm H}^{\alpha}}\left( {\left| {\upsilon_{x,m}} \right|} \right)}}\left| {\sin \left( {\pi {\mathcal{Q}_m}\left( {x + \upsilon_{x,m}} \right)} \right) - \sin \left( {\pi {\mathcal{Q}_m}x} \right)} \right|\notag\\
 	& = \frac{{{q^m}}}{{{\varpi_{\rm H}^{\alpha}}\left( {\left| {\upsilon_{x,m}} \right|} \right)}}\left| {{{\left( { - 1} \right)}^{{\mathcal{N}_m} + 1}}\left( {\operatorname{sgn} \left( {{\mathcal{R}_m}} \right) + \sin \left( {\pi {\mathcal{R}_m}} \right)} \right)} \right|\notag\\
 	& = \frac{{{q^m}}}{{{\varpi_{\rm H}^{\alpha}}\left( {\left| {\upsilon_{x,m}} \right|} \right)}}\left| {\operatorname{sgn} \left( {{\mathcal{R}_m}} \right) + \sin \left( {\pi {\mathcal{R}_m}} \right)} \right|\notag\\
 	\label{sanduan1}	& \geq \frac{{{q^m}}}{{{\varpi_{\rm H}^{\alpha}}\left( {\mathcal{Q}_m^{ - 1}} \right)}}.
 \end{align}
 Secondly,  by applying the Mean Value Theorem we obtain that
 \begin{align}
 	\left| {{\mathscr{S}_2}} \right| &\leq \sum\limits_{n = 1}^{m - 1} {\frac{{{q^n}}}{{{\varpi_{\rm H}^{\alpha}}\left( {\left| {\upsilon_{x,m}} \right|} \right)}}\left| {\sin \left( {\pi {\mathcal{Q}_n}\left( {x + \upsilon_{x,m}} \right)} \right) - \sin \left( {\pi {\mathcal{Q}_n}x} \right)} \right|} \notag\\
 	&\leq \sum\limits_{n = 1}^{m - 1} {\frac{{{q^n}}}{{{\varpi_{\rm H}^{\alpha}}\left( {\left| {\upsilon_{x,m}} \right|} \right)}} \cdot \pi {\mathcal{Q}_n}\left| {\upsilon_{x,m}} \right|} \notag\\
 	& \leq \left( {\pi \sum\limits_{n = 1}^\infty  {{q^n}} } \right)\frac{{{\mathcal{Q}_{m - 1}}\mathcal{Q}_m^{ - 1}}}{{{\varpi_{\rm H}^{\alpha}}\left( {\mathcal{Q}_m^{ - 1}} \right)}}\notag\\
 	&= \frac{{{\mathcal{Q}_{m - 1}}\mathcal{Q}_m^{ - 1}}}{{{\varpi_{\rm H}^{\alpha}}\left( {\mathcal{Q}_m^{ - 1}} \right)}}\cdot \frac{{\pi q}}{{1 - q}}\notag\\
 	\label{sanduan2}&\leq \frac{q^m}{2{{\varpi_{\rm H}^{\alpha}}\left( {\mathcal{Q}_m^{ - 1}} \right)}},
 \end{align}
 where we use the following fact due to $ \left\{ {{\mathcal{Q}_{n + 1}}}\mathcal{Q}_n^{-1} \right\}_{n \in \mathbb{N}^+}  \subseteq  10{\mathbb{N}^ + } $:  
 \begin{equation}\label{3pi}
 	{\mathcal{Q}_{m - 1}}\mathcal{Q}_m^{ - 1}\left( {\frac{{\pi q}}{{1 - q}}} \right) \leq \frac{{\pi q}}{{10\left( {1 - q} \right)}} < \frac{{3\pi q}}{{20}} \leq \frac{1}{2}{q^m},\;\;\forall m \in {\mathbb{N}^ + }.
 \end{equation}
 Thirdly, note that $ {\mathcal{Q}_n}\mathcal{Q}_m^{ - 1} \in 10\mathbb{N}^+ $. Then it follows that
 \begin{align*}
 	\sin \left( {\pi {\mathcal{Q}_n}\left( {x + \upsilon_{x,m}} \right)} \right) &= \sin \left( {\pi {\mathcal{Q}_n}\mathcal{Q}_m^{ - 1} \cdot {\mathcal{Q}_m}\left( {x + \upsilon_{x,m}} \right)} \right)\\
 	& = \sin \left( {\pi {\mathcal{Q}_n}\mathcal{Q}_m^{ - 1} \cdot \left( {{\mathcal{N}_m} - {2^{ - 1}}\operatorname{sgn} \left( {{\mathcal{R}_m}} \right)} \right)} \right)\\
 	& = 0,\;\;\forall n \geq m + 1,
 \end{align*}
 which leads to
 \begin{align}
 	\left| {{\mathscr{S}_3}} \right| &\leq \sum\limits_{n = m + 1}^\infty  {\frac{{{q^n}}}{{{\varpi_{\rm H}^{\alpha}}\left( {\left| {\upsilon_{x,m}} \right|} \right)}}\left| {\sin \left( {\pi {\mathcal{Q}_n}\left( {x + \upsilon_{x,m}} \right)} \right) - \sin \left( {\pi {\mathcal{Q}_n}x} \right)} \right|} \notag\\
 	& = \sum\limits_{n = m + 1}^\infty  {\frac{{{q^n}}}{{{\varpi_{\rm H}^{\alpha}}\left( {\left| {\upsilon_{x,m}} \right|} \right)}}\left| {\sin \left( {\pi {\mathcal{Q}_n}x} \right)} \right|} \notag\\
 	& \leq \frac{1}{{{\varpi_{\rm H}^{\alpha}}\left( {\left| {\upsilon_{x,m}} \right|} \right)}}\sum\limits_{n = m + 1}^\infty  {{q^n}} \notag\\
 	\label{sanduan3}	&\leq \frac{1}{{{\varpi_{\rm H}^{\alpha}}\left( {\mathcal{Q}_m^{ - 1}} \right)}}  \cdot \frac{{{q^m}}}{{1 - q}} .
 \end{align}
 Finally, substituting \eqref{sanduan1}, \eqref{sanduan2}, \eqref{sanduan3} into \eqref{sanduan} and using \eqref{fanjieGZ} yields that
 \begin{align*}
 	\frac{{\left| {\mathscr{P}\left( {x + \upsilon_{x,m} } \right) - \mathscr{P}\left( x \right)} \right|}}{{{\varpi_{\rm H}^{\alpha}}\left( {\left| {\upsilon_{x,m}} \right|} \right)}}	&\geq \left| {{\mathscr{S}_1}} \right| - \left| {{\mathscr{S}_2}} \right| - \left| {{\mathscr{S}_3}} \right|\\
 	&\geq \frac{{{q^m}}}{{{\varpi_{\rm H}^{\alpha}}\left( {\mathcal{Q}_m^{ - 1}} \right)}} - \frac{{{q^m}}}{2{{\varpi_{\rm H}^{\alpha}}\left( {\mathcal{Q}_m^{ - 1}} \right)}} - \frac{1}{{{\varpi_{\rm H}^{\alpha}}\left( {\mathcal{Q}_m^{ - 1}} \right)}}  \cdot \frac{{{q^m}}}{{1 - q}} \\
 	&= \frac{1}{{{\varpi_{\rm H}^{\alpha}}\left( {\mathcal{Q}_m^{ - 1}} \right)}}\left(  {\frac{{1 - 3q}}{{2\left( {1 - q} \right)}}} \right)q^m\\
 	&\geq m .
 \end{align*}
 In other words, for every $ x\in \mathbb{R} $ (note that $ \mathscr{P} $ is $ 1 $-periodic), one could construct a sequence $ \{\upsilon_{x,m}\}_{m \in \mathbb{N}^+} $ such that
 \[\mathop {\underline {\lim } }\limits_{m \to \infty } 	\frac{{\left| {\mathscr{P}\left( {x + \upsilon_{x,m} } \right) - \mathscr{P}\left( x \right)} \right|}}{{{\varpi_{\rm H}^{\alpha}}\left( {\left| {\upsilon_{x,m}} \right|} \right)}} =  + \infty. \]
 This implies that $ \mathscr{P}(x)$ is nowhere H\"older continuous because the H\"older index $ 0<\alpha<1 $ could be arbitrary chosen, and we therefore prove the proposition.
\end{proof}

\section{Proof of Main Theorem \ref{theorem1}}\label{section4}

Now let us prove  Theorem \ref{theorem1} by separating two subsections, namely frequency-preserving KAM persistence (Section \ref{KAM}) and remaining  regularity (Section \ref{furtherregularity}) for KAM torus as well as conjugation. For the former, the overall process is similar to that in \cite{salamon}, but the key points to weaken the H\"older regularity to only modulus of continuity are using Jackson type approximation Theorem \ref{Theorem1} and proving the uniform convergence of the transformation mapping, that is, the convergence of the upper bound series (see \eqref{dao} and \eqref{dao2}). As we will see later, the Dini type integrability condition  (H1) is crucial for the boundedness. As to the latter, we have to establish a more general regularity iterative theorem (Theorem \ref{t1}), which does not seem simple since the resulting regularity might be somewhat  complicated due to asymptotic analysis.

\subsection{Frequency-preserving KAM persistence}\label{KAM}
The proof of the frequency-preserving KAM persistence is organized as follows. Firstly, we construct a series of analytic approximation functions $ H^\nu(x,y) $ of $ H(x,y)$ by using  Theorem \ref{Theorem1} and considering (H1) and (H2). Secondly, we shall construct a sequence of frequency-preserving analytic and symplectic transformations $ \psi^\nu $ by induction. According to (H2), (H3) and (H4), the first step of induction is established by applying  Theorem \ref{appendix} in Section \ref{Appsalamon} (or Theorem 1 in \cite{salamon}). Then, combining with weak homogeneity and certain specific estimates we complete the proof of induction and obtain the uniform convergence of the composite transformations. Finally, in the light of (H5), the regularity of the KAM torus as well as the conjugation is guaranteed by  Theorem \ref{t1}. We shall emphasize that the unperturbed part of Hamiltonian $ H(x,y) $ could be non-integrable and finitely smooth due to our strategy.\vspace{3mm}
\\
{\textbf{Step1:}} In view of   Theorem \ref{Theorem1} (we have assumed that the modulus of continuity $ \varpi $ admits semi separability and thus Theorem \ref{Theorem1} could be directly  applied here), one could approximate $ H(x, y) $ by a sequence of real analytic functions $ H_\nu(x, y) $ for $ \nu \geq 0 $ in the strips
\[\left| {\operatorname{Im} x} \right| \leq {r_\nu },\;\;\left| {\operatorname{Im} y} \right| \leq {r_\nu },\;\;{r_\nu }: = {2^{ - \nu }}\varepsilon \]
around $ \left| {\operatorname{Re} x} \right| \in {\mathbb{T}^n},\left| {\operatorname{Re} y} \right| \leq \rho , $ such that
\begin{equation}\label{T1-3}
	\begin{aligned}
		\left| {{H^\nu }\left( z \right) - \sum\limits_{\left| \alpha  \right| \leq k} {{\partial ^\alpha }H\left( {\operatorname{Re} z} \right)\frac{{{{\left( {\mathrm{i}\operatorname{Im} z} \right)}^\alpha }}}{{\alpha !}}} } \right| \leq{}& {c_1}{\left\| H \right\|_\varpi }r_\nu ^k\varpi \left( {{r_\nu }} \right),\\
		\left| {H_y^\nu \left( z \right) - \sum\limits_{\left| \alpha  \right| \leq k-1} {{\partial ^\alpha }{H_y}\left( {\operatorname{Re} z} \right)\frac{{{{\left( {\mathrm{i}\operatorname{Im} z} \right)}^\alpha }}}{{\alpha !}}} } \right| \leq{}& {c_1}{\left\| H \right\|_\varpi }r_\nu ^{k - 1}\varpi \left( {{r_\nu }} \right), \\
		\left| {H_{yy}^\nu \left( z \right) - \sum\limits_{\left| \alpha  \right| \leq k-2} {{\partial ^\alpha }{H_{yy}}\left( {\operatorname{Re} z} \right)\frac{{{{\left( {\mathrm{i}\operatorname{Im} z} \right)}^\alpha }}}{{\alpha !}}} } \right| \leq{}& {c_1}{\left\| H \right\|_\varpi }r_\nu ^{k - 2}\varpi \left( {{r_\nu }} \right)
	\end{aligned}
\end{equation}
for $ \left| {\operatorname{Im} x} \right| \leq {r_\nu },\;\left| {\operatorname{Im} y} \right| \leq {r_\nu } $, and $ c_1=c(n,k) $ is the constant provided in \eqref{3.2}.

Fix $ \theta  = 1/\sqrt 2  $. In what follows,  we will construct a sequence of real analytic symplectic transformations $ z=(x,y),\zeta=(\xi,\eta),z = {\phi ^\nu }\left( \zeta  \right) $ of the form
\begin{equation}\label{bhxs}
	x = {u^\nu }\left( \xi  \right),\;\;y = v^{\nu}\left( \xi  \right) + (u_\xi ^\nu)^{\top}{\left( \xi  \right)^{ - 1}}\eta
\end{equation}
by induction, such that $ {u^\nu }\left( \xi  \right) - \xi  $ and $ {v^\nu }\left( \xi  \right) $ are of period $ 1 $ in all variables, and $ {\phi ^\nu } $ maps the strip $ \left| {\operatorname{Im} \xi } \right| ,\left| \eta  \right| \leq \theta {r_{\nu  + 1}} $ into $ \left| {\operatorname{Im} x} \right| ,\left| y \right| \leq {r_\nu },\left| {\operatorname{Re} y} \right| \leq \rho  $, and the transformed Hamiltonian function $ 	{K^\nu }: = {H^\nu } \circ {\phi ^\nu } $ satisfies
\begin{equation}\label{qiudao}
	K_\xi ^\nu \left( {\xi ,0} \right) = 0,\;\;K_\eta ^\nu \left( {\xi ,0} \right) = \omega ,
\end{equation}
i.e., with prescribed universal Diophantine frequency-preserving. Namely by verifying certain conditions we obtain $ z=\psi^\nu(\zeta) $ of the form \eqref{bhxs} from Theorem \ref{appendix} by induction, mapping $ \left| {\operatorname{Im} \xi } \right|,\left| \eta  \right| \leq {r_{\nu  + 1}} $ into $ \left| {\operatorname{Im} x} \right|,\left| y \right| \leq \theta {r_\nu } $, and $ \psi^\nu \left( {\xi ,0} \right) - \left( {\xi ,0} \right) $ is of period $ 1 $, and \eqref{qiudao} holds. Here we denote $ \phi^\nu:=\phi^{\nu-1} \circ \psi^\nu$ with $ {\phi ^{ - 1}}: = \mathrm{id} $ (where $ \mathrm{id} $ denotes the $ 2n $-dimensional identity mapping and therefore $ {\phi ^0} = {\psi ^0} $). Further more, Theorem \ref{appendix} will lead to
\begin{align}
	\left| {{\psi ^\nu }\left( \zeta  \right) - \zeta } \right| &\leq c\left( {1 - \theta } \right)r_\nu ^{k - 2\tau  - 1}\varpi \left( {{r_\nu }} \right),\label{2.82}\\
	\left| {\psi _\zeta ^\nu\left( \zeta  \right) - \mathbb{I}} \right| &\leq cr_\nu ^{k - 2\tau  - 2}\varpi \left( {{r_\nu }} \right),\label{2.83}\\
	\left| {K_{\eta \eta }^\nu\left( \zeta  \right) - {Q^\nu}\left( \zeta  \right)} \right| &\leq cr_\nu ^{k - 2\tau  - 2}\varpi \left( {{r_\nu }} \right)/2M,\label{2.84}\\
	\left| {U_x^\nu \left( x \right)} \right| &\leq cr_\nu ^{k - \tau  - 1}\varpi \left( {{r_\nu }} \right),\label{2.85}
\end{align}
on $ \left| {\operatorname{Im} \xi } \right|,\left| \eta  \right|,\left| {\operatorname{Im} x} \right| \leq r_{\nu+1} $, where $ {S^\nu }\left( {x,\eta } \right) = {U^\nu }\left( x \right) + \left\langle {{V^\nu }\left( x \right),\eta } \right\rangle  $ is the generating function for $ {\psi ^\nu } $, and $ Q^\nu:=K_{\eta \eta}^{\nu-1} $, and $ \mathbb{I} $ denotes the $ 2n \times 2n $-dimensional identity mapping,  and
\begin{equation}\label{Q0}
	{Q^0}\left( z \right): = \sum\limits_{\left| \alpha  \right| \leq k - 2} {{\partial ^\alpha }{H_{yy}}\left( {\operatorname{Re} z} \right)\frac{{{{\left( {\mathrm{i}\operatorname{Im} x} \right)}^\alpha }}}{{\alpha !}}} .
\end{equation}
\\
{\textbf{Step2:}} Here we show that $ \psi^0=\phi^0 $ exists, and it admits the properties mentioned in Step 1. Denote
\begin{equation}\notag
	h(x) := H\left( {x,0} \right) - \int_{{\mathbb{T}^n}} {H\left( {\xi ,0} \right){\rm d}\xi } ,\;\;x \in {\mathbb{R}^n}.
\end{equation}
Then by the first term in \eqref{T1-2}, we have
\begin{equation}\label{pianh}
	\sum\limits_{\left| \alpha  \right| \leq k} {\left| {{\partial ^\alpha }h} \right|{\varepsilon ^{\left| \alpha  \right|}}}  < M{\varepsilon ^k}\varpi \left( \varepsilon  \right).
\end{equation}
Note that
\begin{align*}
	{H^0}\left( {x,0} \right) - \int_{{\mathbb{T}^n}} {{H^0}\left( {\xi ,0} \right){\rm d}\xi }  ={}& {H^0}\left( {x,0} \right) - \sum\limits_{\left| \alpha  \right| \leq k} {\partial _x^\alpha H\left( {\operatorname{Re} x,0} \right)\frac{{{{\left( {\mathrm{i}\operatorname{Im} x} \right)}^\alpha }}}{{\alpha !}}} \notag \\
	{}&+ \int_{{\mathbb{T}^n}} {\left( {H\left( {\xi ,0} \right) - {H^0}\left( {\xi ,0} \right)} \right){\rm d}\xi } \notag \\
	{}&+ \sum\limits_{\left| \alpha  \right| \leq k} {{\partial ^\alpha }h\left( {\operatorname{Re} x} \right)\frac{{{{\left( {\mathrm{i}\operatorname{Im} x} \right)}^\alpha }}}{{\alpha !}}} .
\end{align*}
Hence, for $ \left| {\operatorname{Im} x} \right| \leq \theta {r_0} = \theta \varepsilon  $, by using Theorem \ref{Theorem1}, Corollary \ref{coro1} and \eqref{pianh} we arrive at
\begin{align*}
	\left| {{H^0}\left( {x,0} \right) - \int_{{\mathbb{T}^n}} {{H^0}\left( {\xi ,0} \right){\rm d}\xi } } \right| &\leq 2{c_1}{\left\| H \right\|_\varpi }{\varepsilon ^k}\varpi \left( \varepsilon  \right) + M{\varepsilon ^k}\varpi \left( \varepsilon  \right)\notag \\
	&\leq c{\varepsilon ^k}\varpi \left( \varepsilon  \right) \notag \\
	&\leq c{\varepsilon ^{k - 2\tau  - 2}}\varpi \left( \varepsilon  \right) \cdot {\left( {\theta \varepsilon } \right)^{2\tau  + 2}}.
\end{align*}
Now consider the vector valued function $ 	f\left( x \right): = {H_y}\left( {x,0} \right) - \omega  $ for $ x \in {\mathbb{R}^n} $. In view of the second term in \eqref{T1-2}, we have
\begin{equation}\label{pianf}
	\sum\limits_{\left| \alpha  \right| \leq k - 1} {\left| {{\partial ^\alpha }f} \right|{\varepsilon ^{\left| \alpha  \right|}}}  \leq M{\varepsilon ^{k - \tau  - 1}}\varpi \left( \varepsilon  \right).
\end{equation}
Note that
\begin{align*}
	H_y^0\left( {x,0} \right) - \omega  ={}& H_y^0\left( {x,0} \right) - \sum\limits_{\left| \alpha  \right| \leq k - 1} {\partial _x^\alpha {H_y}\left( {\operatorname{Re} x,0} \right)\frac{{{{\left( {\mathrm{i}\operatorname{Im} x} \right)}^\alpha }}}{{\alpha !}}} \notag \\
	{}&+ \sum\limits_{\left| \alpha  \right| \leq k - 1} {{\partial ^\alpha }f\left( {\operatorname{Re} x} \right)\frac{{{{\left( {\mathrm{i}\operatorname{Im} x} \right)}^\alpha }}}{{\alpha !}}}.
\end{align*}
Therefore, for $ \left| {\operatorname{Im} x} \right| \leq \theta \varepsilon  $, by using \eqref{T1-3} and \eqref{pianf} we obtain that
\begin{align*}
	\left| {H_y^0\left( {x,0} \right) - \omega } \right| &\leq {c_1}{\left\| H \right\|_\varpi }{\varepsilon ^{k - 1}}\varpi \left( \varepsilon  \right) + M{\varepsilon ^{k - \tau  - 1}}\varpi \left( \varepsilon  \right)\notag \\
	&\leq c{\varepsilon ^{k - \tau  - 1}}\varpi \left( \varepsilon  \right) \notag \\
	&\leq c{\varepsilon ^{k - 2\tau  - 2}}\varpi \left( \varepsilon  \right) \cdot {\left( {\theta \varepsilon } \right)^{\tau  + 1}}.
\end{align*}
Recall \eqref{Q0}. Then it follows from \eqref{T1-3} that
\begin{align*}
	\left| {H_{yy}^0\left( z \right) - {Q^0}\left( z \right)} \right| &\leq {c_1}{\left\| H \right\|_\varpi }{\varepsilon ^{k - 2}}\varpi \left( \varepsilon  \right) \notag \\
	&\leq \frac{c}{{4M}}{\varepsilon ^{k - 2}}\varpi \left( \varepsilon  \right) \\
	&\leq \frac{c}{{4M}}{\varepsilon ^{k - 2\tau  - 2}}\varpi \left( \varepsilon  \right),\;\;\left| {\operatorname{Im} x} \right|,\left| y \right| \leq \theta \varepsilon,
\end{align*}
and
\begin{equation}\notag
	\left| {{Q^0}\left( z \right)} \right| \leq \sum\limits_{\left| \alpha  \right| \leq k - 2} {{{\left\| H \right\|}_\varpi }\frac{{{\varepsilon ^{\left| \alpha  \right|}}}}{{\alpha !}}}  \leq {\left\| H \right\|_\varpi }\sum\limits_{\alpha  \in {\mathbb{N}^{2n}}} {\frac{{{\varepsilon ^{\left| \alpha  \right|}}}}{{\alpha !}}}  = {\left\| H \right\|_\varpi }{e^{2n\varepsilon }} \leq 2M,\;\; \left| {\operatorname{Im} z} \right| \leq \varepsilon  .
\end{equation}

Now, by taking $ r^{*} = \theta \varepsilon ,\delta^{*}  = {\varepsilon ^{k - 2\tau  - 2}}\varpi \left( \varepsilon  \right) $ and using  Theorem \ref{appendix} there exists a real analytic symplectic transformation $ z = {\phi ^0}\left( \zeta  \right) $ of the form \eqref{bhxs} (with $ \nu=0 $) mapping the strip $ \left| {\operatorname{Im} \xi } \right|,\left| \eta  \right| \leq  {r_1}=r_0/2 $ into $ \left| {\operatorname{Im} x} \right|,\left| y \right| \leq \theta{r_0}=r_0/\sqrt{2} $,
such that $ {u^0}\left( \xi  \right) - \xi  $ and $ {v^0}\left( \xi  \right) $ are of period $ 1 $ in all variables and the Hamiltonian function $ {K^0}: = {H^0} \circ {\phi ^0} $  satisfies \eqref{qiudao} (with $ \nu=0 $). Moreover, \eqref{2.82}-\eqref{2.84}  (with $ \nu=0 $) hold.

Also assume that
\begin{equation}\notag
	\left| {K_{\eta \eta }^{\nu  - 1}\left( \zeta  \right)} \right| \leq {M_{\nu  - 1}},\;\;\left| {{{\left( {\int_{{\mathbb{T}^n}} {K_{\eta \eta }^{\nu  - 1}\left( {\xi ,0} \right){\rm d}\xi } } \right)}^{ - 1}}} \right| \leq {M_{\nu  - 1}},\;\;{M_\nu } \leq M
\end{equation}
for $ \left| {\operatorname{Im} x} \right| ,\left| y \right| \leq {r_\nu } $. Finally, define
\[\tilde H\left( {x,y} \right): = {H^\nu } \circ {\phi ^{\nu  - 1}}\left( {x,y} \right)\]
with respect to $ \left| {\operatorname{Im} x} \right| ,\left| y \right| \leq {r_\nu } $. One can verify that $ {\tilde H} $ is well defined.

Next we assume that the transformation $ z = {\phi ^{\nu  - 1}}\left( \zeta  \right) $ of the form \eqref{bhxs} has been constructed, mapping $ \left| {\operatorname{Im} \xi } \right|,\left| \eta  \right| \leq \theta {r_\nu } $ into $ \left| {\operatorname{Im} x} \right|,\left| {\operatorname{Im} y} \right| \leq {r_{\nu  - 1}},\left| {\operatorname{Re} y} \right| \leq \rho  $, and $ {u^{\nu  - 1}}\left( \xi  \right) - \xi ,{v^{\nu  - 1}}\left( \xi  \right) $ are of period $ 1 $ in all variables, and $ K_\xi ^{\nu  - 1}\left( {\xi ,0} \right) = 0,K_\eta ^{\nu  - 1}\left( {\xi ,0} \right) = \omega  $. In addition, we also assume that \eqref{2.82}-\eqref{2.85} hold for $ 0, \ldots ,\nu  - 1 $. In the next Step 3, we will verify that the above still hold for $ \nu $, which establishes a complete induction.\vspace{3mm}
\\
{\textbf{Step3:}} We will prove the existence of transformation $ {\phi ^\nu } $ in each step according to the specific estimates below and  Theorem \ref{appendix}.

Let $ \left| {\operatorname{Im} x} \right| \leq \theta {r_\nu } $. Then $ \phi^{\nu-1}(x,0) $ lies in the region where the estimates in \eqref{T1-3} hold for both $ H^\nu $ and $ H^{\nu-1} $. Note that $ x \mapsto H^{\nu-1}(\phi^{\nu-1}(x,0)) $ is constant by \eqref{qiudao}. Then by \eqref{T1-3}, we arrive at the following for $ \left| {\operatorname{Im} x} \right| \leq \theta {r_\nu } $
\begin{align*}
	\left| {\tilde H\left( {x,0} \right) - \int_{{\mathbb{T}^n}} {\tilde H\left( {\xi ,0} \right){\rm d}\xi } } \right| &\leq 2\mathop {\sup }\limits_{\left| {\operatorname{Im} \xi } \right| \leq \theta {r_\nu }} \left| {{H^\nu }\left( {{\phi ^{\nu  - 1}}\left( {\xi ,0} \right)} \right) - {H^{\nu  - 1}}\left( {{\phi ^{\nu  - 1}}\left( {\xi ,0} \right)} \right)} \right|\notag \\
	&\leq 2{c_1}{\left\| H \right\|_\varpi }r_\nu ^k\varpi \left( {{r_\nu }} \right) + 2{c_1}{\left\| H \right\|_\varpi }r_{\nu-1} ^{k}\varpi \left( {{r_{\nu-1} }} \right)\notag \\
	&\leq cr_\nu ^{k - 2\tau  - 2}\varpi \left( {{r_\nu }} \right) \cdot r_\nu ^{2\tau  + 2},
\end{align*}
where the weak homogeneity of $ \varpi $ with respect to $ a=1/2 $ (see Definition \ref{weak}) has been used in the last inequality, because $ \varpi(r_{\nu-1})=\varpi(2r_{\nu})\leq c \varpi(r_{\nu}) $ (thus $ c $ is independent of $ \nu $). For convenience we may therefore not mention it in the following.

Taking $ \eta=0 $ in \eqref{2.83} we have
\begin{align}
	\left| {u_\xi ^{\nu  - 1}\left( \xi  \right) - \mathbb{I}} \right| &\leq \sum\limits_{\mu  = 0}^{\nu  - 1} {\left| {u_\xi ^\mu \left( \xi  \right) - u_\xi ^{\mu  - 1}\left( \xi  \right)} \right|}  \notag \\
	&\leq c\sum\limits_{\mu  = 0}^{\nu  - 1} {r_\mu ^{k - 2\tau  - 2}\varpi \left( {{r_\mu }} \right)}  \notag \\
	&\leq c\sum\limits_{\mu  = 0}^\infty  {{{\left( {\frac{\varepsilon }{{{2^\mu }}}} \right)}^{k - 2\tau  - 2}}\varpi \left( {\frac{\varepsilon }{{{2^\mu }}}} \right)}  \notag \\
	&\leq c\sum\limits_{\mu  = 0}^\infty  {\left( {\frac{\varepsilon }{{{2^{\mu  - 1}}}} - \frac{\varepsilon }{{{2^\mu }}}} \right){{\left( {\frac{\varepsilon }{{{2^\mu }}}} \right)}^{k - 2\tau  - 3}}\varpi \left( {\frac{\varepsilon }{{{2^\mu }}}} \right)}  \notag \\
		&   \leq c\sum\limits_{\mu  = 0}^\infty  {\int_{\varepsilon /{2^\mu }}^{\varepsilon /{2^{\mu  - 1}}} {\frac{{\varpi \left( x \right)}}{{{x^{2\tau  + 3 - k}}}}{\rm d}x} }  \notag \\
	&\leq c\int_0^{2\varepsilon } {\frac{{\varpi \left( x \right)}}{{{x^{2\tau  + 3 - k}}}}{\rm d}x}  \notag \\
	\label{dao}&\leq 1 - \theta
\end{align}
for $ \left| {\operatorname{Im} \xi } \right| \leq \theta {r_\nu } $, and the Dini type condition \eqref{Dini} in (H1) together with Cauchy Theorem are used since $ \varepsilon>0 $ is sufficiently small. Then it leads to
\begin{equation}\label{nidao}
	\left| {u_\xi ^{\nu  - 1}{{\left( \xi  \right)}^{ - 1}}} \right| \leq {\theta ^{ - 1}},\;\;\left| {\operatorname{Im} \xi } \right| \leq \theta {r_\nu }.
\end{equation}

Finally, by \eqref{nidao} and \eqref{T1-3} we obtain that
\begin{align*}
	\left| {{{\tilde H}_y}\left( {x,0} \right) - \omega } \right| &= \left| {u_\xi ^{\nu  - 1}{{\left( x \right)}^{ - 1}}\left( {H_y^\nu \left( {{\phi ^{\nu  - 1}}\left( {x,0} \right)} \right) - H_y^{\nu  - 1}\left( {{\phi ^{\nu  - 1}}\left( {x,0} \right)} \right)} \right)} \right|\notag \\
	& \leq {\theta ^{ - 1}}\left| {H_y^\nu \left( {{\phi ^{\nu  - 1}}\left( {x,0} \right)} \right) - H_y^{\nu  - 1}\left( {{\phi ^{\nu  - 1}}\left( {x,0} \right)} \right)} \right|\notag \\
	& \leq {\theta ^{ - 1}}\left( {{c_1}{{\left\| H \right\|}_\varpi }r_\nu ^{k - 1}\varpi \left( {{r_\nu }} \right) + {c_1}{{\left\| H \right\|}_\varpi }r_{\nu  - 1}^{k - 1}\varpi \left( {{r_{\nu  - 1}}} \right)} \right)\notag \\
	&\leq cr_\nu ^{k - 1}\varpi \left( {{r_\nu }} \right)\notag \\
	&\leq cr_\nu ^{k - \tau  - 2}\varpi \left( {{r_\nu }} \right) \cdot r_\nu ^{\tau  + 1},
\end{align*}
and
\begin{align*}
	\left| {{{\tilde H}_{yy}}\left( z \right) - {Q^\nu }\left( z \right)} \right| &= \left| {u_\xi ^{\nu  - 1}{{\left( x \right)}^{ - 1}}\left( {H_{yy}^\nu \left( {{\phi ^{\nu  - 1}}\left( z \right)} \right) - H_{yy}^{\nu  - 1}\left( {{\phi ^{\nu  - 1}}\left( z \right)} \right)} \right){{\left( {u_\xi ^{\nu  - 1}{{\left( x \right)}^{ - 1}}} \right)}^{\top}}} \right|\notag \\
	&\leq {\theta ^{ - 2}}\left| {H_{yy}^\nu \left( {{\phi ^{\nu  - 1}}\left( z \right)} \right) - H_{yy}^{\nu  - 1}\left( {{\phi ^{\nu  - 1}}\left( z \right)} \right)} \right|\notag \\
	&\leq {\theta ^{ - 2}}\left( {{c_1}{{\left\| H \right\|}_\varpi }r_\nu ^{k - 2}\varpi \left( {{r_\nu }} \right) + {c_1}{{\left\| H \right\|}_\varpi }r_{\nu  - 1}^{k - 2}\varpi \left( {{r_{\nu  - 1}}} \right)} \right)\notag \\
	&\leq cr_\nu ^{k - 2\tau  - 2}\varpi \left( {{r_\nu }} \right)/2M
\end{align*}
for $ \left| {\operatorname{Im} x} \right|,\left| y \right| \leq \theta {r_\nu } $. Then denote $ r^*:= r_\nu $ and $ \delta^*:=c r_\nu ^{k - 2\tau  - 2}\varpi \left( {{r_\nu }} \right) $ in   Theorem \ref{appendix}, we obtain the analytic symplectic preserving transformation $ {\phi ^\nu } $ of each step, mapping the strip $ \left| {\operatorname{Im} \xi } \right|\leq \theta {r_\nu },\left| \eta  \right| \leq \theta {r_\nu } $ into $ \left| {\operatorname{Im} x} \right|\leq {r_\nu },\left| y \right| \leq {r_\nu } $, such that $ {u^\nu }\left( \xi  \right) - \xi $ and $ {v^\nu }\left( \xi  \right) $ are of period $ 1 $ in all variables, and the transformed Hamiltonian function $ {K^\nu } = {H^\nu } \circ {\phi ^\nu } $ satisfies
\[K_\xi ^\nu \left( {\xi ,0} \right) = 0,\;\;K_\eta ^\nu \left( {\xi ,0} \right) = \omega .\]
Moreover, \eqref{2.82}-\eqref{2.85} are valid for $ \left| {\operatorname{Im} \xi } \right|,\left| \eta  \right|,\left| {\operatorname{Im} x} \right|\leq \theta {r_\nu }$.\vspace{3mm}
\\
{\textbf{Step4:}} By  \eqref{2.83} for $ 0, \ldots ,\nu  - 1 $ and the arguments in \eqref{dao}, there holds
\begin{align}
	\left| {\phi _\zeta ^{\nu  - 1}\left( \zeta  \right)} \right| &\leq 1 + \sum\limits_{\mu  = 0}^{\nu  - 1} {\left| {\phi _\zeta ^\mu \left( \zeta  \right) - \phi _\zeta ^{\mu  - 1}\left( \zeta  \right)} \right|}  \notag \\
	&\leq 1 + \sum\limits_{\mu  = 0}^{\nu  - 1} {\left( {\left| {\phi _\zeta ^\mu \left( \zeta  \right) - \mathbb{I}} \right| + \left| {\phi _\zeta ^{\mu  - 1}\left( \zeta  \right) - \mathbb{I}} \right|} \right)} \notag \\
		&\leq 1 + c\sum\limits_{\mu  = 0}^\infty  {{{\left( {\frac{\varepsilon }{{{2^\mu }}}} \right)}^{k - 2\tau  - 2}}\varpi \left( {\frac{\varepsilon }{{{2^\mu }}}} \right)}   \notag \\
	&\leq 1 + c\int_0^{2\varepsilon } {\frac{{\varpi \left( x \right)}}{x^{2\tau +3-k}}{\rm d}x}   \notag \\
\label{dao2}	&\leq 2
\end{align}
for $ \left| {\operatorname{Im} \xi } \right|,\left| \eta  \right| \leq \theta {r_\nu } $ as long as $ \varepsilon>0 $ is sufficiently small, which leads to
\begin{align*}
	\left| {{\phi ^\nu }\left( \zeta  \right) - {\phi ^{\nu  - 1}}\left( \zeta  \right)} \right| &= \left| {{\phi ^{\nu  - 1}}\left( {{\psi ^\nu }\left( \zeta  \right)} \right) - {\phi ^{\nu  - 1}}\left( \zeta  \right)} \right| \notag \\
	&\leq 2\left| {{\psi ^\nu }\left( \zeta  \right) - \zeta } \right| \notag \\
	&\leq c\left( {1 - \theta } \right)r_\nu ^{k - 2\tau  - 1}\varpi \left( {{r_\nu }} \right)
\end{align*}
for $ \left| {\operatorname{Im} \xi } \right|,\left| \eta  \right| \leq {r_{\nu  + 1}} $. Then by Cauchy's estimate, we obtain that
\begin{equation}\notag
	\left| {\phi _\zeta ^\nu \left( \zeta  \right) - \phi _\zeta ^{\nu  - 1}\left( \zeta  \right)} \right| \leq cr_\nu ^{k - 2\tau  - 2}\varpi \left( {{r_\nu }} \right),\;\;\left| {\operatorname{Im} \xi } \right|,\left| \eta  \right| \leq {r_{\nu  + 1}}.
\end{equation}
It can be proved in the same way that $ | {\phi _\zeta ^\nu \left( \zeta  \right)} | \leq 2 $ for $ \left| {\operatorname{Im} \xi } \right|,\left| \eta  \right| \leq \theta {r_{\nu  + 1}} $, which implies
\begin{equation}\notag
	\left| {\operatorname{Im} z} \right| \leq 2\left| {\operatorname{Im} \zeta } \right| \leq 2\sqrt {{{\left| {\operatorname{Im} \xi } \right|}^2} + {{\left| {\operatorname{Im} \eta } \right|}^2}}  \leq 2\sqrt {{\theta ^2}r_{\nu  + 1}^2 + {\theta ^2}r_{\nu  + 1}^2}  = 2{r_{\nu  + 1}} = {r_\nu }.
\end{equation}
Besides, we have $ \left| {\operatorname{Re} y} \right| \leq \rho  $.

Note that
\begin{equation}\notag
	{v^\nu } \circ {\left( {{u^\nu }} \right)^{ - 1}}\left( x \right) - {v^{\nu  - 1}} \circ {\left( {{u^{\nu  - 1}}} \right)^{ - 1}}\left( x \right) = {\left( {u_\xi ^{\nu  - 1}{{\left( \xi  \right)}^{ - 1}}} \right)^{\top}}U_x^\nu \left( \xi  \right),\;\;x: = {u^{\nu  - 1}}\left( \xi  \right).
\end{equation}
Recall \eqref{dao}. By employing the contraction mapping principle we have $ \left| {\operatorname{Im} \xi } \right| \leq {r_{\nu  + 1}} $ if $ \left| {\operatorname{Im} x} \right| \leq \theta {r_{\nu  + 1}} $ with respect to $ x $ defined above. Then from \eqref{2.85} and \eqref{nidao} one can verify that
\begin{equation}\label{4.97}
	\left| {{{\big( {u_\xi ^{\nu  - 1}{{\left( \xi  \right)}^{ - 1}}} \big)}^{\top}}U_x^\nu \left( \xi  \right)} \right| \leq cr_\nu ^{k - \tau  - 1}\varpi \left( {{r_\nu }} \right).
\end{equation}
{\textbf{Step5:}} Finally, we are in a position  to  prove the uniform convergence of $ u^\nu $ and $ v^\nu $, and the regularity of their limit functions. Note \eqref{4.97}. Then we obtain the following analytic iterative scheme
\begin{equation}\label{4.98}
	\left| {{u^\nu }\left( \xi  \right) - {u^{\nu  - 1}}\left( \xi  \right)} \right| \leq cr_\nu ^{k - 2\tau  - 1}\varpi \left( {{r_\nu }} \right), \;\; \left| {\operatorname{Im} \xi } \right| \leq {r_{\nu  + 1}},
\end{equation}
and
\begin{equation}\label{4.99}
	\left| {{v^\nu } \circ {{\left( {{u^\nu }} \right)}^{ - 1}}\left( x \right) - {v^{\nu  - 1}} \circ {{\left( {{u^{\nu  - 1}}} \right)}^{ - 1}}\left( x \right)} \right| \leq c r_\nu ^{k - \tau  - 1}\varpi \left( {{r_\nu }} \right),\;\;\left| {\operatorname{Im} x} \right| \leq \theta {r_{\nu  + 1}}.
\end{equation}
And especially, \eqref{4.98} and \eqref{4.99} hold when $ \nu=0 $ since $ {u^{0 - 1}} = \mathrm{id} $ and $ {v^{ 0- 1}} = 0 $. It is obvious to see that the uniform limits $ u $ and $ v\circ u^{-1} $  of  $ u^\nu $ and $ v^\nu\circ (u^\nu)^{-1} $ are at least $ C^1 $ (in fact, this is implied by the higher regularity studied later in Section \ref{furtherregularity}, or one could derive the lower one using simpler technique than that in Theorem \ref{t1}, which we omit here). In addition, the persistent invariant torus possesses the same universal Diophantine (class $ \tau>n-1 $) frequency $ \omega $  as the unperturbed torus by \eqref{qiudao}.

\subsection{Iteration theorem on regularity beyond H\"older's type}\label{furtherregularity}
To obtain accurate regularity for $ u $ and $ v\circ u^{-1} $ from the analytic iterative scheme \eqref{4.98} and \eqref{4.99},  we shall along with the idea of Moser and Salamon to establish an \textit{abstract iterative theorem} on regularity beyond H\"older's type, which explicitly  shows the  modulus of continuity of the integral form. It should be emphasized that, from H\"older to non-H\"older,  the estimates here are much more difficult.
\begin{theorem}[Abstract iterative theorem]\label{t1}
	Let $ n\in \mathbb{N}^+, \varepsilon>0 $ and $ \{r_\nu\}_{\nu \in \mathbb
		N}=\{\varepsilon2^{-\nu}\}_{n \in \mathbb{N}} $ be given, and denote by $ f:{\mathbb{R}^n} \to \mathbb{R} $ the limit of a sequence of real analytic functions $ {f_\nu }\left( x \right) $ in the strips $ \left| {\operatorname{Im} x} \right| \leq {r_{\nu} } $ such that
	\begin{equation}\label{huoche}
		{f_0} = 0,\;\;\left| {{f_\nu }\left( x \right) - {f_{\nu  - 1}}\left( x \right)} \right| \leq \varphi \left( {{r_\nu }} \right),\;\;\nu  \geq 1,
	\end{equation}
	where $ \varphi $ is a nondecreasing continuous function satisfying $  \varphi \left( 0 \right) = 0  $.		Assume that there is a critical $ k_* \in \mathbb{N} $ such that
	\begin{equation}\label{330}
		\int_0^1 {\frac{{\varphi \left( x \right)}}{{{x^{{k_*} + 1}}}}{\rm d}x}  <  + \infty ,\;\;\int_0^1 {\frac{{\varphi \left( x \right)}}{{{x^{{k_*} + 2}}}}{\rm d}x}  =  + \infty .
	\end{equation}
	Then there exists a modulus of continuity $ \varpi_*  $  such that $ f \in {C_{k_*,\varpi_* }}\left( {{\mathbb{R}^n}} \right) $.  In other words, the regularity of $ f $ is at least of $ C^{k_*} $ plus $ \varpi_* $. In particular, $ \varpi_* $ could be determined as
	\begin{equation}\label{LLL}
		{\varpi _ * }\left( \gamma \right) \sim	\gamma \int_{L\left( \gamma  \right)}^\varepsilon  {\frac{{\varphi \left( t \right)}}{{{t^{{k_*} + 2}}}}{\rm d}t}  = {\mathcal{O}^\# }\left( {\int_0^{L\left( \gamma  \right)} {\frac{{\varphi \left( t \right)}}{{{t^{{k_*} + 1}}}}{\rm d}t} } \right) ,\;\;\gamma  \to {0^ + },
	\end{equation}
	where $ L(\gamma) \to 0^+ $ is some function such that the second relation in \eqref{LLL} holds.
\end{theorem}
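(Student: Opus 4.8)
The plan is to run the Moser--Salamon telescoping argument adapted to a general $\varphi$. First I would turn the $C^0$ bounds \eqref{huoche} into derivative bounds: since $f_\nu-f_{\nu-1}$ is real analytic on $|\operatorname{Im}x|\le r_\nu$ and bounded there by $\varphi(r_\nu)$, the Cauchy estimate on a polydisc of polyradius $(r_\nu,\dots,r_\nu)$ about a real point gives $|\partial^\alpha(f_\nu-f_{\nu-1})(x)|\le c(n,|\alpha|)\varphi(r_\nu)r_\nu^{-|\alpha|}$ for all $x\in\mathbb{R}^n$ and every multi-index $\alpha$. Writing $f=\sum_{\nu\ge1}(f_\nu-f_{\nu-1})$ (using $f_0=0$), the series $\sum_\nu\partial^\alpha(f_\nu-f_{\nu-1})$ is majorised by $\sum_\nu\varphi(r_\nu)r_\nu^{-|\alpha|}$; since $r_{\nu-1}-r_\nu=r_\nu$ and $\varphi$ is nondecreasing, this sum is $\mathcal{O}(\int_0^\varepsilon\varphi(t)t^{-|\alpha|-1}\,{\rm d}t)$, which is finite for every $|\alpha|\le k_*$ by the first relation in \eqref{330}. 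Hence the series converges uniformly on $\mathbb{R}^n$, $f\in C^{k_*}(\mathbb{R}^n)$, and $f$ together with its derivatives up to order $k_*$ is bounded, accounting for the $|\partial^\alpha f|_{C^0}$ part of the norm in \eqref{k-w}.

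For the modulus of continuity of the top-order derivatives, fix $|\alpha|=k_*$ and real $x,y$ with $h:=|x-y|$ small, set $\Delta_\nu:=\partial^\alpha(f_\nu-f_{\nu-1})$, and split
\begin{equation}\notag
	\bigl|\partial^\alpha f(x)-\partial^\alpha f(y)\bigr|\le\sum_{\nu=1}^{N}\bigl|\Delta_\nu(x)-\Delta_\nu(y)\bigr|+\sum_{\nu>N}\bigl|\Delta_\nu(x)-\Delta_\nu(y)\bigr|
\end{equation}
at an index $N=N(h)$ to be chosen. For $\nu\le N$ the mean value theorem along the real segment $[x,y]$ together with the Cauchy estimate for $(k_*+1)$-st order derivatives bounds the head by $c\,h\sum_{\nu\le N}\varphi(r_\nu)r_\nu^{-k_*-1}=:c\,h\,\Sigma_1(N)$; for $\nu>N$ the triangle inequality and the $k_*$-th order estimate bound the tail by $c\sum_{\nu>N}\varphi(r_\nu)r_\nu^{-k_*}=:c\,\Sigma_2(N)$. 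Using the dyadic spacing of the $r_\nu$ and the monotonicity of $\varphi$, one checks $\Sigma_1(N)=\mathcal{O}^{\#}(A(r_N))$ and $\Sigma_2(N)=\mathcal{O}(B(r_N))$, where $A(s):=\int_s^\varepsilon\varphi(t)t^{-k_*-2}\,{\rm d}t$ and $B(s):=\int_0^s\varphi(t)t^{-k_*-1}\,{\rm d}t$; here $B$ is finite and decreases to $0$ as $s\to0^+$ by the first relation in \eqref{330}, while $A$ increases to $+\infty$ by the divergence in \eqref{330} (which also forces $\varphi>0$ on $(0,\varepsilon]$).

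It remains to choose $N(h)$, equivalently $L(h):=r_{N(h)}$. The ratio $R(s):=B(s)/A(s)$ is continuous and strictly increasing on $(0,\varepsilon)$ with $R(0^+)=0$ and $R(\varepsilon^-)=+\infty$, so for each small $\gamma>0$ there is a unique $L(\gamma)\in(0,\varepsilon)$ with $B(L(\gamma))=\gamma A(L(\gamma))$, and $L(\gamma)\to0^+$ as $\gamma\to0^+$; this is precisely the second relation in \eqref{LLL}. Choosing $N(h)$ with $L(h)/2<r_{N(h)}\le L(h)$, and using that $\varphi$ nondecreasing forces the slow-variation estimate $A(L/2)=\mathcal{O}(A(L))$ (while $B$ is monotone), the head becomes $\le c\,h\,A(L(h))$ and the tail becomes $\le c\,B(L(h))=c\,h\,A(L(h))$ by the defining relation for $L$. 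Hence $|\partial^\alpha f(x)-\partial^\alpha f(y)|\le c\,\varpi_*(h)$ for small $h$ with $\varpi_*(\gamma):=\gamma A(L(\gamma))$; for $h$ bounded away from $0$ the bound is trivial from $|\partial^\alpha f|_{C^0}<+\infty$. Finally $\varpi_*$ is a modulus of continuity in the sense of Definition \ref{d1}: it is continuous, and since $\varpi_*(\gamma)=\mathcal{O}^{\#}(B(L(\gamma)))$ with $L(\gamma)$ continuous, increasing and vanishing at $0^+$, it is nondecreasing and tends to $0$, while $\gamma/\varpi_*(\gamma)=1/A(L(\gamma))\to0<+\infty$. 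Therefore $f\in C_{k_*,\varpi_*}(\mathbb{R}^n)$, together with the representation \eqref{LLL}.

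I expect the main obstacle to be the passage between the dyadic sums $\Sigma_1,\Sigma_2$ and the integral quantities $A(L),B(L)$, and the analysis of the balancing function $R$. In the H\"older case $\varphi(t)=ct^\ell$ all of these are explicit geometric series with $L(\gamma)=\mathcal{O}^{\#}(\gamma)$, so the real content is that for a general nondecreasing $\varphi$ satisfying only \eqref{330} one still has (i) $A$ increasing to $+\infty$ and (ii) the slow-variation estimate $A(L/2)=\mathcal{O}(A(L))$ that allows replacing $r_N$ by $L$ — both of which must be extracted from the convergence/divergence dichotomy in \eqref{330} rather than from any a priori homogeneity of $\varphi$; keeping all constants independent of $\nu$ and $h$ also needs care. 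The explicit asymptotics of $L(\gamma)$, which pin down the concrete remaining regularity (logarithmic type and so on) in the applications of Section \ref{section6}, are deliberately left outside this abstract statement.
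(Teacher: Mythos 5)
Your proposal is correct and follows essentially the same route as the paper's proof: telescoping $f=\sum_\nu(f_\nu-f_{\nu-1})$, Cauchy estimates on the shrinking strips, a split at a threshold index with the head controlled by $|x-y|\int_{L}^{\varepsilon}\varphi(t)t^{-k_*-2}\,{\rm d}t$ and the tail by $\int_0^{L}\varphi(t)t^{-k_*-1}\,{\rm d}t$, and a choice of $L(\gamma)$ balancing the two integrals. Your explicit construction of $L$ via the intermediate value theorem applied to $B/A$, together with the slow-variation bound $A(L/2)=\mathcal{O}(A(L))$ and the verification that $\varpi_*$ is a modulus of continuity, merely makes precise what the paper asserts when it invokes \eqref{330} and the Intermediate Value Theorem.
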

\begin{proof}
	Define $ {g_\nu }(x): = {f_\nu }\left( x \right) - {f_{\nu  - 1}}\left( x \right) $ for all $ \nu \in \mathbb{N}^+ $.	Determine an integer function $ \widetilde N(\gamma) : [0,1] \to \mathbb{N}^+ $ (note that $ \widetilde N(\gamma) $ can be extended to $ \mathbb{R}^+ $ due to the arguments below, we thus assume that  it is  a continuous function). Then for the given critical  $ k_*\in \mathbb{N} $ and $ x,y\in \mathbb{R}^n $, we obtain  the following  for all multi-indices $ \alpha  = \left( {{\alpha _1}, \ldots ,{\alpha _n}} \right) \in {\mathbb{N}^n} $ with $ \left| \alpha  \right| = k_* $:
	\begin{align}
		\sum\limits_{\nu  = 1}^{\widetilde N\left( {\left| {x - y} \right|} \right)-1} {\left| {{\partial ^\alpha }{g_\nu }\left( x \right) - {\partial ^\alpha }{g_\nu }\left( y \right)} \right|}&\leq \left| {x - y} \right|\sum\limits_{\nu  = 1}^{\widetilde N\left( {\left| {x - y} \right|} \right)-1} {{{\left| {{\partial ^\alpha }{g_{\nu x}}} \right|}_{{C^0}(\mathbb{R}^n)}}}\notag \\
		&\leq \left| {x - y} \right|\sum\limits_{\nu  = 1}^{\widetilde N\left( {\left| {x - y} \right|} \right)-1} {\frac{1}{{r_\nu ^{k_* + 1}}}\varphi \left( {{r_\nu }} \right)} \notag \\
		& = 2\left| {x - y} \right|\sum\limits_{\nu  = 1}^{\widetilde N\left( {\left| {x - y} \right|} \right)-1} {\left( {\frac{\varepsilon }{{{2^\nu }}} - \frac{\varepsilon }{{{2^{\nu  + 1}}}}} \right){{\left( {\frac{{{2^\nu }}}{\varepsilon }} \right)}^{{k_ * } + 2}}\varphi \left( {\frac{\varepsilon }{{{2^\nu }}}} \right)}\notag  \\
		\label{zhengze1}&\leq  c\left| {x - y} \right|\int_{\varepsilon {2^{ - \widetilde N\left( {\left| {x - y} \right|} \right) }}}^\varepsilon  {\frac{{\varphi \left( t \right)}}{{{t^{{k_*} + 2}}}}{\rm d}t} ,
	\end{align}
	where Cauchy's estimate and \eqref{huoche} are used in the second inequality,  and arguments similar  to  \eqref{dao} are employed in \eqref{zhengze1}, $ c>0 $ is a universal constant. Besides, we similarly get
	\begin{align}
		\sum\limits_{\nu  = \widetilde N\left( {\left| {x - y} \right|} \right) }^\infty  {\left| {{\partial ^\alpha }{g_\nu }\left( x \right) - {\partial ^\alpha }{g_\nu }\left( y \right)} \right|} &\leq \sum\limits_{\nu  = \widetilde N\left( {\left| {x - y} \right|} \right) }^\infty  {2{{\left| {{\partial ^\alpha }{g_\nu }} \right|}_{{C^0}(\mathbb{R}^n)}}}\notag \\
		&\leq 2\sum\limits_{\nu  = \widetilde N\left( {\left| {x - y} \right|} \right)}^\infty  {\frac{1}{{r_\nu ^{k_*}}}\varphi \left( {{r_\nu }} \right)}\notag \\
		&=2\sum\limits_{\nu  = \widetilde N\left( {\left| {x - y} \right|} \right)}^\infty  {\left( {\frac{\varepsilon }{{{2^\nu }}} - \frac{\varepsilon }{{{2^{\nu  + 1}}}}} \right){{\left( {\frac{{{2^\nu }}}{\varepsilon }} \right)}^{{k_ * } + 1}}\varphi \left( {\frac{\varepsilon }{{{2^\nu }}}} \right)}\notag \\
		\label{zhengze2}& \leq c\int_0^{\varepsilon {2^{ - \widetilde N\left( {\left| {x - y} \right|} \right) }}} {\frac{{\varphi \left(t \right)}}{{{t^{{k_*} + 1}}}}{\rm d}t} .
	\end{align}
	Now let us choose $ \widetilde{N}(\gamma) \to +\infty $ as $ \gamma \to 0^+ $ such that
	\begin{equation}\label{varpi*}
		\gamma \int_{L\left( \gamma  \right)}^\varepsilon  {\frac{{\varphi \left( t \right)}}{{{t^{{k_*} + 2}}}}{\rm d}t}  = {\mathcal{O}^\# }\left( {\int_0^{L\left( \gamma  \right)} {\frac{{\varphi \left( t \right)}}{{{t^{{k_*} + 1}}}}{\rm d}t} } \right): = {\varpi _ * }\left( \gamma \right),\;\;\gamma  \to {0^ + },
	\end{equation}
	where $ \varepsilon {2^{ - \widetilde N\left( \gamma  \right) - 1}}: = L\left( \gamma  \right)\to 0^+ $. This is achievable due to assumption \eqref{330}, Cauchy Theorem  and The Intermediate Value Theorem. Note that the choice of $ L(\gamma) $ (i.e., $ \widetilde{N} $) and $ \varpi_* $ is not unique (may up to a constant), and $ \varpi_* $ could be continuously extended to some given interval (e.g., $ [0,1] $), but this does not affect the qualitative result. Combining \eqref{zhengze1}, \eqref{zhengze2} and \eqref{varpi*} we finally arrive at $ f \in {C_{k_*,\varpi_* }}\left( {{\mathbb{R}^n}} \right) $ because
	\begin{align*}
		\left| {{\partial ^\alpha }f\left( x \right) - {\partial ^\alpha }f\left( y \right)} \right| &\leq \sum\limits_{\nu  = 1}^{\widetilde N\left( {\left| {x - y} \right|} \right)} { + \sum\limits_{\nu  = \widetilde N\left( {\left| {x - y} \right|} \right) + 1}^\infty  {\left| {{\partial ^\alpha }{g_\nu }\left( x \right) - {\partial ^\alpha }{g_\nu }\left( y \right)} \right|} } \\
		& \leq c\left( {\left| {x - y} \right|\int_{\varepsilon {2^{ - \widetilde N\left( {\left| {x - y} \right|} \right) - 1}}}^\varepsilon  {\frac{{\varphi \left( t \right)}}{{{t^{{k_*} + 2}}}}{\rm d}t}  + \int_0^{\varepsilon {2^{ - \widetilde N\left( {\left| {x - y} \right|} \right) - 1}}} {\frac{{\varphi \left( t \right)}}{{{t^{{k_*} + 1}}}}{\rm d}t} } \right) \\
		&\leq c\varpi_* \left( {\left| {x - y} \right|} \right).
	\end{align*}
	
\end{proof}

Theorem \ref{t1} can be extended to the case $ f:{\mathbb{R}^n} \to \mathbb{R}^m $ with $ n,m \in \mathbb{N}^+ $ since the analysis is completely  same,  and the strip $ \left| {\operatorname{Im} x} \right| \leq {r_\nu } $ can also be replaced by $ \left| {\operatorname{Im} x} \right| \leq {r_{\nu+1} } $ (or $ \leq \theta r_{\nu+1} $).  Theorem \ref{t1} can also be used to estimate the regularity of solutions of finite smooth homological equations, thus KAM uniqueness theorems in some cases might be derived, see Section 4 in \cite{salamon} for instance. However, in order to avoid too much content in this paper, it is omitted here.

Finally, by recalling \eqref{4.98} and \eqref{4.99}, one applies  Theorem \ref{t1} on $ \{u^{\nu}-\mathrm{id}\}_\nu $ (because Theorem \ref{t1} requires that the initial value vanishes) and $ \{v^{\nu}\circ (u^\nu)^{-1}\}_\nu $ to directly analyze the regularity of  KAM torus and conjugation according to (H5), i.e., there exist modulus of continuity $ {\varpi _i} $ ($ i=1,2 $) such that $ u \in {C_{k_1^ * ,{\varpi _1}}}\left( {{\mathbb{R}^n},{\mathbb{R}^n}} \right) $ and $ v \circ {u^{ - 1}} \in {C_{k_2^ * ,{\varpi _2}}}\left( {{\mathbb{R}^n},G} \right) $. This completes the proof of Theorem \ref{theorem1}.

\section{Proof of other results}\label{ProofOther}

\subsection{Proof of  Theorem \ref{Holder}}\label{8}

Note that $ \ell  \notin {\mathbb{N}^ + } $ implies $ \{\ell\}\in (0,1) $. Then $ k=[\ell] $ and $ \varpi(x)\sim \varpi_{\mathrm{H}}^{\ell}(x)\sim x^{\{\ell\}} $, i.e., modulus of continuity of H\"older's type. Consequently, (H1) can be directly verified because of  $ \ell  > 2\tau  + 2 $:
\begin{align*}
	\int_0^1 {\frac{{\varpi \left( x \right)}}{{{x^{2\tau  + 3 - k}}}}{\rm d}x} & = \int_0^1 {\frac{{{x^{\left\{ \ell  \right\}}}}}{{{x^{2\tau  + 3 - \left[ \ell  \right]}}}}{\rm d}x}  \notag \\
	&= \int_0^1 {\frac{1}{{{x^{1 - \left( {\ell  - 2\tau  - 2} \right)}}}}{\rm d}x}  <  + \infty .
\end{align*}
Here and below, let $ i $ be $ 1 $ or $ 2 $ for simplicity. Recall that $ {\varphi _i}\left( x \right) = {x^{k - \left( {3 - i} \right)\tau  - 1}}\varpi \left( x \right) = {x^{\left[ \ell  \right] - \left( {3 - i} \right)\tau  - 1}} \times {x^{\left\{ \ell  \right\}}} = {x^{\ell  - \left( {3 - i} \right)\tau  - 1}} $, and let
\begin{align}
	\label{fff}\int_0^1 {\frac{{{\varphi _i}\left( x \right)}}{{{x^{k_i^ *  + 1}}}}{\rm d}x}  &= \int_0^1 {\frac{1}{{{x^{k_i^ *  - \left( {\ell  - \left( {3 - i} \right)\tau  - 2} \right)}}}}{\rm d}x}<+\infty,\\
	\label{ffff}\int_0^1 {\frac{{{\varphi _i}\left( x \right)}}{{{x^{k_i^ *  + 2}}}}{\rm d}x}  &= \int_0^1 {\frac{1}{{{x^{k_i^ *  - \left( {\ell  - \left( {3 - i} \right)\tau  - 2} \right) + 1}}}}{\rm d}x}=+\infty .
\end{align}
Then the critical $ k_i^* $ in (H5) could be uniquely chosen as $ k_i^ * : = \left[ {\ell  - \left( {3 - i} \right)\tau  - 1} \right] \in \mathbb{N}^+$ since $ \ell  - \left( {3 - i} \right)\tau  - 1 \notin \mathbb{N}^+ $. Further, letting $ {L_i}\left( \gamma  \right) = \gamma  \to {0^ + } $ yields 
\begin{align*}
	\int_0^{{L_i}\left( \gamma  \right)} {\frac{{{\varphi _i}\left( t \right)}}{{{t^{k_i^ *  + 1}}}}{\rm d}t}  &= {\mathcal{O}^\# }\left( {\int_0^\gamma  {\frac{1}{{{t^{1 - \left\{ {\ell  - \left( {3 - i} \right)\tau  - 2} \right\}}}}}{\rm d}t} } \right) \notag \\
	&= {\mathcal{O}^\# }\left( {{\gamma ^{\left\{ {\ell  - \left( {3 - i} \right)\tau  - 2} \right\}}}} \right)
\end{align*}
and
\begin{align*}
	\gamma \int_{{L_i}\left( \gamma  \right)}^\varepsilon  {\frac{{{\varphi _i}\left( t \right)}}{{{t^{k_i^ *  + 2}}}}{\rm d}t}  &= {\mathcal{O}^\# }\left( {\gamma \int_\gamma ^\varepsilon  {\frac{1}{{{t^{2 - \left\{ {\ell  - \left( {3 - i} \right)\tau  - 2} \right\}}}}}{\rm d}t} } \right) \notag \\
	&= {\mathcal{O}^\# }\left( {{\gamma ^{\left\{ {\ell  - \left( {3 - i} \right)\tau  - 2} \right\}}}} \right).
\end{align*}
This leads to modulus of continuity being of H\"older's type
\[{\varpi _i}\left( \gamma  \right) \sim {\left( {{L_i}\left( \gamma  \right)} \right)^{\left\{ {\ell  - \left( {3 - i} \right)\tau  - 2} \right\}}} \sim {\gamma ^{\left\{ {\ell  - \left( {3 - i} \right)\tau  - 2} \right\}}}\sim \varpi_{\mathrm{H}}^{\left\{ {\ell  - \left( {3 - i} \right)\tau  - 2} \right\}}(\gamma)\]
due to \eqref{varpii} in Theorem \ref{Theorem1}. 

Finally by observing the H\"older indices $ k_i^ *  + \left\{ {\ell  - \left( {3 - i} \right)\tau  - 2} \right\} = \ell  - \left( {3 - i} \right)\tau  - 1 $ for $ i=1,2 $, we  conclude that $ u \in {C^{\ell-2\tau-1}}\left( {{\mathbb{R}^n},{\mathbb{R}^n}} \right) $ and $ v \circ {u^{ - 1}} \in {C^{\ell-\tau-1}}\left( {{\mathbb{R}^n},G} \right) $. This proves Theorem \ref{Holder}.

\subsection{Proof of Theorem \ref{lognew}}

Firstly, note that $ k=[2\tau+2] $ and $ \varpi \left( x \right) \sim {x^{\{2\tau+2\}}}/{\left( { - \ln x} \right)^\lambda } $ with $ \lambda  > 1 $, then (H1) holds since
\begin{align*}
	\int_0^1 {\frac{{\varpi \left( x \right)}}{{{x^{2\tau  + 3 - k}}}}{\rm d}x}  &= {\mathcal{O}^\# }\left( {\int_0^{1/2} {\frac{{{x^{\left\{ {2\tau  + 2} \right\}}}}}{{{x^{2\tau  + 3 - \left[ {2\tau  + 2} \right]}}{{\left( { - \ln x} \right)}^\lambda }}}{\rm d}x} } \right)\\
	& = {\mathcal{O}^\# }\left( {\int_0^{1/2} {\frac{1}{{x{{\left( { - \ln x} \right)}^\lambda }}}{\rm d}x} } \right) <  + \infty .
\end{align*}
Thus  the frequency-preserving KAM persistence is obtained. The followings are devoted to the remaining regularity.

Secondly, in view of $ \varphi_i(x) $ in (H5), we have
\[\int_0^1 {\frac{{{\varphi _i}\left( x \right)}}{{{x^{k_i^ *  + 1}}}}{\rm d}x}  = {\mathcal{O}^\# }\left( {\int_0^{1/2} {\frac{1}{{{x^{k_i^ *  - \left( {i - 1} \right)\tau }}{{\left( { - \ln x} \right)}^\lambda }}}{\rm d}x} } \right)\;\;i=1,2.\]
This leads to critical $ k_1^*=1 $ and $ k_2^*=[\tau+1] $ in (H5). Here one uses the following fact: for given $ \lambda>1 $,
\[\int_0^{1/2} {\frac{1}{{{x^\iota }{{\left( { - \ln x} \right)}^\lambda }}}{\rm d}x}  <  + \infty ,\;\;\int_0^{1/2} {\frac{1}{{{x^{\iota  + 1}}{{\left( { - \ln x} \right)}^\lambda }}}{\rm d}x}=+\infty \]
if and only if $ \iota \in (0,1] $.

Next, we shall investigate the  KAM remaining regularity through certain complicated asymptotic analysis, that is, show the modulus of continuity $ \varpi_1 $ and $ \varpi_2 $ explicitly. We have to discuss this problem in two cases. This is because $ (i-1)\tau=0 $ when $ i= 1$ (corresponds to the conjugation), i.e., independent of the Diophantine property; while $ (i-1)\tau=\tau $ when $ i=2 $ (corresponds to the KAM torus). As can be seen later, they lead to different asymptotic analysis.\vspace{3mm}
\\
{\textbf{Case 1:}} Here we provide the analysis for $ \varpi_1 $ with all $ \tau>n-1 $, as well as $ \varpi_2 $ with $ n-1<\tau \in \mathbb{N}^+ $. In view of $ \varphi_i(x) $  in (H5), then by applying Lemma  \ref{duochongduishu} with $ \varrho=1 $ we get
\begin{align}
	\gamma \int_{{L_i}\left( \gamma  \right)}^\varepsilon  {\frac{{{\varphi _i}\left( t \right)}}{{{t^{k_i^ *  + 2}}}}{\rm d}t}  &= {\mathcal{O}^\# }\Bigg( {\gamma \int_{{L_i}\left( \gamma  \right)}^\varepsilon  {\frac{1}{{{t^2}(\ln (1/t))^\lambda  }}{\rm d}t} } \Bigg)\notag \\
	& = {\mathcal{O}^\# }\Bigg( {\gamma \int_{1/\varepsilon }^{1/{L_i}\left( \gamma  \right)} {\frac{1}{{(\ln z)^\lambda }}{\rm d}z} } \Bigg)\notag \\
	\label{Cguji1}& = {\mathcal{O}^\# }\Bigg( {\frac{\gamma }{{{L_i}\left( \gamma  \right)(\ln (1/{L_i}\left( \gamma  \right)))^\lambda}}} \Bigg),
\end{align}
and by direct calculation one arrives at
\begin{align}
	\int_0^{{L_i}\left( \gamma  \right)} {\frac{{{\varphi _i}\left( t \right)}}{{{t^{k_i^ *  + 1}}}}{\rm d}t} & = {\mathcal{O}^\# }\Bigg( {\int_{{L_i}\left( \gamma  \right)}^\varepsilon  {\frac{1}{{t(\ln (1/t))^\lambda }}{\rm d}t} } \Bigg)\notag \\
	\label{Cguji2}&= {\mathcal{O}^\# }\Bigg( {\frac{1}{{{{( {\ln  } (1/{L_i}\left( \gamma  \right)))}^{\lambda  - 1}}}}} \Bigg).
\end{align}
Finally, choosing
\begin{equation}\label{CLit4}
	{L_i}\left( \gamma  \right) \sim \frac{{\gamma }}{{\ln (1/\gamma) }} \to {0^ + },\;\;\gamma  \to {0^ + }
\end{equation}
leads to the second relation in \eqref{varpii} for $ i=1,2 $, and substituting $ L_i(\gamma) $ into \eqref{Cguji1} or \eqref{Cguji2} yields 
\[{\varpi _1}\left( \gamma \right) \sim \frac{1}{{{{\left( { - \ln \gamma} \right)}^{\lambda  - 1}}}} \sim \varpi _{\mathrm{LH}}^{\lambda  - 1}\left( \gamma \right), \;\; \tau >n-1\]
and
\[{\varpi _2}\left( \gamma \right) \sim \frac{1}{{{{\left( { - \ln \gamma} \right)}^{\lambda  - 1}}}} \sim \varpi _{\mathrm{LH}}^{\lambda  - 1}\left( \gamma \right), \;\; n-1<\tau \notin \mathbb{N}^+\]
in Theorem \ref{theorem1}, see \eqref{varpii}. \vspace{3mm}
\\
{\textbf{Case 2:}} 
However, the asymptotic analysis for $ \varpi_2 $ becomes more different when $ n-1<\tau \notin \mathbb{N}^+  $. Note that $ \{\tau\} \in (0,1) $ and $ \left[ {\tau  + 1} \right] - \tau  = \left[ \tau  \right] + 1 - \tau  = 1 - \left\{ \tau  \right\} $ at present. Hence, by  applying \eqref{erheyi1} in Lemma \ref{erheyi} we get
\begin{align}
	\int_0^{{L_2}\left( \gamma  \right)} {\frac{{{\varphi _2}\left( t \right)}}{{{t^{k_2^ *  + 1}}}}{\rm d}t}  &= {\mathcal{O}^\# }\left( {\int_0^{{L_2}\left( \gamma  \right)} {\frac{1}{{{t^{\left[ {\tau  + 1} \right] - \tau }}{{\left( { - \ln t} \right)}^\lambda }}}{\rm d}t} } \right) \notag \\
	&= {\mathcal{O}^\# }\left( {\int_0^{{L_2}\left( \gamma  \right)} {\frac{1}{{{t^{1 - \left\{ \tau  \right\}}}{{\left( { - \ln t} \right)}^\lambda }}}{\rm d}t} } \right)\notag \\
&= {\mathcal{O}^\# }\left( {\int_{1/{L_2}\left( \gamma  \right)}^{ + \infty } {\frac{1}{{{z^{1 + \left\{ \tau  \right\}}}{{\left( {\ln z} \right)}^\lambda }}}{\rm d}z} } \right) \notag \\
	\label{coro42}& 	= {\mathcal{O}^\# }\left( {\frac{{{{\left( {{L_2}\left( \gamma  \right)} \right)}^{\left\{ \tau  \right\}}}}}{{{{\left( {\ln \left( {1/{L_2}\left( \gamma  \right)} \right)} \right)}^\lambda }}}} \right),
\end{align}
and similarly according to \eqref{erheyi2} in Lemma \ref{erheyi} we have
\begin{align}
		\gamma \int_{{L_2}\left( \gamma  \right)}^\varepsilon  {\frac{{{\varphi _2}\left( t \right)}}{{{t^{k_2^ *  + 2}}}}{\rm d}t}  &= {\mathcal{O}^\# }\left( {\gamma \int_{1/\varepsilon }^{1/{L_2}\left( \gamma  \right)} {\frac{1}{{{z^{\left\{ \tau  \right\}}}{{\left( {\ln z} \right)}^\lambda }}}{\rm d}z} } \right)\notag \\
	\label{coro43}& = {\mathcal{O}^\# }\left( {\frac{{\gamma {{\left( {{L_2}\left( \gamma  \right)} \right)}^{\left\{ \tau  \right\} - 1}}}}{{{{\left( {\ln \left( {1/{L_2}\left( \gamma  \right)} \right)} \right)}^\lambda }}}} \right).	
\end{align}
Now let us choose $ L_2(\gamma) \sim \gamma \to 0^+ $, i.e., different from that when $ n-1<\tau \in \mathbb{N}^+ $ in Case 1, 	one verifies that the second relation in \eqref{varpii} holds for $ i=2 $, and substituting $ L_2(\gamma) $ into \eqref{coro42} or \eqref{coro43} yields the following modulus of continuity
\[{\varpi _2}\left( \gamma  \right) \sim \frac{{{\gamma ^{\left\{ \tau  \right\}}}}}{{{{\left( { - \ln \gamma } \right)}^\lambda }}} \sim {\gamma ^{\left\{ \tau  \right\}}}\varpi _{\mathrm{LH}}^\lambda \left( \gamma  \right),\;\; n-1<\tau \notin \mathbb{N}^+\]
due to \eqref{varpii} in Theorem \ref{theorem1}.
 
  This finishes the proof of Theorem \ref{lognew}.

\subsection{Proof of Theorem \ref{GLHnew}}
It is sufficient to determine $ k_i^* $ in (H5) and to choose functions $ L_i(\gamma)\to 0^+ $ (as $ \gamma \to 0^+ $),  obtaining the modulus of continuity $ \varpi_i $ in \eqref{varpii} for $ i=1,2 $. Obviously $ k_1^*=1 $ and $ k_2^*=\tau+1 $ due to $ \tau \in \mathbb{N}^+ $ and 
\[\int_0^1 {\frac{{\varpi _{\mathrm{GLH}}^{\varrho ,\lambda }\left( x \right)}}{x}{\rm d}x}  <  + \infty ,\;\;\int_0^1 {\frac{{\varpi _{\mathrm{GLH}}^{\varrho ,\lambda }\left( x \right)}}{{{x^2}}}{\rm d}x}  =  + \infty .\]
In view of $ \varphi_i(x) $  in (H5), then by applying Lemma  \ref{duochongduishu} we get
\begin{align}
	\gamma \int_{{L_i}\left( \gamma  \right)}^\varepsilon  {\frac{{{\varphi _i}\left( t \right)}}{{{t^{k_i^ *  + 2}}}}{\rm d}t}  &= {\mathcal{O}^\# }\Bigg( {\gamma \int_{{L_i}\left( \gamma  \right)}^\varepsilon  {\frac{1}{{{t^2}(\ln (1/t)) \cdots {{(\underbrace {\ln  \cdots \ln }_\varrho (1/t))}^\lambda }}}{\rm d}t} } \Bigg)\notag \\
	\label{guji1}& = {\mathcal{O}^\# }\Bigg( {\frac{\gamma }{{{L_i}\left( \gamma  \right)(\ln (1/{L_i}\left( \gamma  \right))) \cdots {{(\underbrace {\ln  \cdots \ln }_\varrho (1/{L_i}\left( \gamma  \right)))}^\lambda }}}} \Bigg),
\end{align}
and by direct calculation one derives
\begin{align}
	\int_0^{{L_i}\left( \gamma  \right)} {\frac{{{\varphi _i}\left( t \right)}}{{{t^{k_i^ *  + 1}}}}{\rm d}t} & = {\mathcal{O}^\# }\Bigg( {\int_{{L_i}\left( \gamma  \right)}^\varepsilon  {\frac{1}{{t(\ln (1/t)) \cdots {{(\underbrace {\ln  \cdots \ln }_\varrho (1/t))}^\lambda }}}{\rm d}t} } \Bigg)\notag \\
	\label{guji2}&= {\mathcal{O}^\# }\Bigg( {\frac{1}{{{{(\underbrace {\ln  \cdots \ln }_\varrho (1/{L_i}\left( \gamma  \right)))}^{\lambda  - 1}}}}} \Bigg).
\end{align}
Finally, choosing
\begin{equation}\label{Lit4}
	{L_i}\left( \gamma  \right) \sim \frac{{\gamma }}{{(\ln (1/\gamma)) \cdots (\underbrace {\ln  \cdots \ln }_{\varrho}(1/\gamma )})} \to {0^ + },\;\;\gamma  \to {0^ + }
\end{equation}
leads to the second relation in \eqref{varpii} for $ i=1,2 $, and substituting $ L_i(\gamma) $ into \eqref{guji1} or \eqref{guji2} yields that
\begin{equation}\label{rho1}
	{\varpi _1}\left( \gamma \right) \sim {\varpi _2}\left( \gamma \right) \sim \frac{1}{{{{(\underbrace {\ln  \cdots \ln }_\varrho (1/\gamma))}^{\lambda  - 1}}}}
\end{equation}
in Theorem \ref{theorem1}, see \eqref{varpii}, which proves Theorem \ref{GLHnew}.

\section{Appendix}

\subsection{Semi separability and weak homogeneity for modulus of continuity}
\begin{lemma}\label{Oxlemma}
	Let a modulus continuity $ \varpi $ be given. If $ \varpi $ is piecewise continuously differentiable and $ \varpi'\geq 0 $ is nonincreasing, then $ \varpi $ admits semi separability in Definition \ref{d2}. As a consequence, if $ \varpi $ is convex near $ 0^+ $, then it is semi separable.
\end{lemma}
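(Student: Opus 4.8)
The plan is to establish the sharp quantitative bound $\psi(x)\le x$ for every $x\ge 1$, which at once gives $\psi(x)=\mathcal{O}(x)$ as $x\to+\infty$ and hence semi separability in the sense of Definition \ref{d2}. Under the hypotheses $\varpi$ is continuous, nondecreasing and piecewise $C^{1}$ with $\varpi'\ge 0$ nonincreasing on $(0,\delta]$, and $\varpi(0^+)=0$ by Definition \ref{d1}; thus $\varpi$ is absolutely continuous on each $[\epsilon,t]\subset(0,\delta]$, and letting $\epsilon\to0^+$ in $\varpi(t)-\varpi(\epsilon)=\int_\epsilon^t\varpi'$ yields the integral representation $\varpi(t)=\int_0^t\varpi'(s)\,{\rm d}s$ (a monotone limit of a nonnegative integrand, with finite value). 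The crucial elementary consequence is that, since $\varpi'(s)\ge\varpi'(t)$ for $0<s\le t$, one has $\varpi(t)\ge t\,\varpi'(t)$ for all $t\in(0,\delta]$.

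With this in hand I would argue directly: fix $x\ge 1$ and $0<r<\delta/x$ (so $r\le rx\le\delta$), and split
\[
\varpi(rx)=\int_0^{r}\varpi'(s)\,{\rm d}s+\int_{r}^{rx}\varpi'(s)\,{\rm d}s\le\varpi(r)+(rx-r)\,\varpi'(r)\le\varpi(r)+(x-1)\,\varpi(r)=x\,\varpi(r),
\]
using $\varpi'$ nonincreasing in the first inequality and $r\,\varpi'(r)\le\varpi(r)$ in the second. Taking the supremum over admissible $r$ gives $\psi(x)=\sup_{0<r<\delta/x}\varpi(rx)/\varpi(r)\le x$, which is exactly $\mathcal{O}(x)$ as $x\to+\infty$, with the attendant pointwise estimate $\varpi(rx)\le\varpi(r)\psi(x)$ for $0<rx\le\delta$ recorded in Remark \ref{Remarksemi}. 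Equivalently, one may phrase the same computation as the monotonicity of $h(t):=\varpi(t)/t$, since $h'(t)=(t\,\varpi'(t)-\varpi(t))/t^{2}\le 0$ wherever defined; then $h(rx)\le h(r)$ for $rx\ge r$ gives the bound immediately.

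For the "consequence" I would note that concavity of $\varpi$ near $0^+$ is, for a piecewise $C^{1}$ function, precisely the condition that $\varpi'\ge 0$ be nonincreasing there, so it falls under the main assertion after shrinking the constant $\delta$ in Definitions \ref{d1}–\ref{d2} to a one–sided neighbourhood of $0^+$ (which is harmless, as $\delta$ is arbitrary). I would add the short remark that, in any case, a modulus of continuity subject to $\overline{\lim}_{x\to0^+}x/\varpi(x)<+\infty$ cannot be genuinely convex–increasing near $0^+$ without being $\mathcal{O}^{\#}(x)$ — for a convex $\varpi$ with $\varpi(0^+)=0$ the ratio $\varpi(x)/x$ is nondecreasing hence bounded above on $(0,\delta]$, while the growth constraint bounds it below — in which degenerate situation semi separability is trivial from $\varpi(rx)/\varpi(r)=\mathcal{O}^{\#}(x)$. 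The argument is otherwise entirely routine; the only point deserving a line of justification is the integral representation of $\varpi$ across the (possibly infinitely many, accumulating at $0$) break points of the piecewise-$C^{1}$ structure, and I do not expect this to present any real obstacle.
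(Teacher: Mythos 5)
Your proof is correct, and it arrives at a slightly sharper bound than the paper by a mildly different elementary route. The paper reduces (without comment) to the continuously differentiable case, writes $\varpi(rx)-\varpi(0^+)=\int_0^{rx}\varpi'(t)\,{\rm d}t$, partitions $[0,rx]$ into $[x]+1$ blocks of length $r$, and bounds each block by $\int_0^r\varpi'(t)\,{\rm d}t=\varpi(r)$ using that $\varpi'$ is nonincreasing, obtaining $\psi(x)\le [x]+1=\mathcal{O}(x)$. You instead extract from the same integral representation the pointwise inequality $t\,\varpi'(t)\le\varpi(t)$ and apply it once to $\int_r^{rx}\varpi'(s)\,{\rm d}s\le (rx-r)\varpi'(r)$ — equivalently, you observe that $\varpi(t)/t$ is nonincreasing — which yields the cleaner estimate $\psi(x)\le x$. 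Both arguments rest on exactly the same two ingredients (the representation $\varpi(t)=\int_0^t\varpi'$ and monotonicity of $\varpi'$), so the difference is one of decomposition rather than substance; what your formulation buys is the sharp constant and, more usefully, the fact that monotonicity of the chord slope $\varpi(t)/t$ needs no differentiability at all, which is precisely what the ``consequence'' requires: as the paper's own proof of Lemma \ref{ruotux} shows, ``convex'' is used there in the sense of nonincreasing chord slopes from the origin (i.e.\ what is usually called concavity), and your side remark that a genuinely convex modulus with $\varpi(0^+)=0$ is forced by Definition \ref{d1} to be comparable to $x$, hence trivially semi separable, covers the other reading as well. The only delicate point — the integral representation across possibly accumulating break points — is treated by your monotone-limit argument, whereas the paper simply assumes $C^1$ ``without loss of generality'', so you are not below its level of rigor there.
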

\begin{proof}
	Assume that $ \varpi $ is continuously differentiable without loss of generality. Then we obtain semi separability due to
	\begin{align*}
		\mathop {\sup }\limits_{0 < r < \delta /x} \frac{{\varpi \left( {rx} \right)}}{{\varpi \left( r \right)}} &= \mathop {\sup }\limits_{0 < r < \delta /x} \frac{{\varpi \left( {rx} \right) - \varpi \left( 0+ \right)}}{{\varpi \left( r \right)}} \leq \mathop {\sup }\limits_{0 < r < \delta /x} \frac{1}{{\varpi \left( r \right)}}\sum\limits_{j = 0}^{\left[ x \right]} {\int_{jr}^{\left( {j + 1} \right)r} {\varpi '\left( t \right){\rm d}t} } \\
		& \leq \mathop {\sup }\limits_{0 < r < \delta /x} \frac{1}{{\varpi \left( r \right)}}\sum\limits_{j = 0}^{\left[ x \right]} {\int_0^r {\varpi '\left( t \right){\rm d}t} }  = \left( {\left[ x \right] + 1} \right) = \mathcal{O}\left( x \right),\;\;x \to  + \infty .
	\end{align*}
\end{proof}
\begin{lemma}\label{ruotux}
	Let a modulus continuity $ \varpi $ be given. If $ \varpi $ is convex near $ 0^+ $, then it admits weak homogeneity in Definition \ref{weak}.
\end{lemma}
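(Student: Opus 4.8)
The plan is to exploit the elementary fact that, for a convex function vanishing at the origin, the difference quotient through the origin is monotone. First I would extend $\varpi$ to $t=0$ by setting $\varpi(0):=0$, which is legitimate since $\lim_{x\to 0^+}\varpi(x)=0$ by Definition \ref{d1}; combined with the hypothesis that $\varpi$ is convex on some interval $(0,\delta']$ (with $\delta'\le\delta$), continuity at the endpoint upgrades this to convexity of $\varpi$ on the whole closed interval $[0,\delta']$.

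Next I would record the key monotonicity: the map $x\mapsto \varpi(x)/x$ is nondecreasing on $(0,\delta']$. Indeed, for $0<x<y\le\delta'$, writing $x=\tfrac{x}{y}\,y+\bigl(1-\tfrac{x}{y}\bigr)\cdot 0$ and invoking convexity together with $\varpi(0)=0$ gives $\varpi(x)\le \tfrac{x}{y}\,\varpi(y)$, that is, $\varpi(x)/x\le \varpi(y)/y$. This is the only ``computation'' needed, and it is immediate.

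Then the conclusion follows at once. Fix $0<a<1$. For every $x\in(0,\delta']$ one has $0<ax<x\le\delta'$, so the monotonicity applied to the pair $ax<x$ yields $\dfrac{\varpi(ax)}{ax}\le \dfrac{\varpi(x)}{x}$, whence $\dfrac{\varpi(x)}{\varpi(ax)}\le \dfrac{x}{ax}=\dfrac1a$ (note $\varpi(ax)>0$ since $\varpi>0$ on $(0,\delta]$). Letting $x\to 0^+$ gives $\overline{\lim}_{x\to 0^+}\varpi(x)/\varpi(ax)\le 1/a<+\infty$, which is precisely \eqref{erfenzhiyi}. There is essentially no obstacle here; the only point deserving care is that the slope-through-the-origin monotonicity requires $\varpi(0)=0$, and this is granted for free by the defining property $\lim_{x\to 0^+}\varpi(x)=0$ of a modulus of continuity. (Together with Lemma \ref{Oxlemma}, this justifies the remark, used throughout Section \ref{section6}, that a modulus of continuity convex near $0^+$ automatically possesses both semi separability and weak homogeneity.)
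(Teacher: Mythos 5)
Your extension of $\varpi$ by $\varpi(0):=0$ and the slope-monotonicity derivation are fine, but the concluding ``whence'' reverses the inequality, and that is exactly where the proof breaks. From $\varpi(ax)/(ax)\le\varpi(x)/x$ (which is what standard convexity with $\varpi(0)=0$ gives, since $ax<x$) one obtains $x\,\varpi(ax)\le ax\,\varpi(x)$, i.e.
\[
\frac{\varpi(x)}{\varpi(ax)}\;\ge\;\frac{1}{a},
\]
a \emph{lower} bound on the ratio, not the upper bound required in \eqref{erfenzhiyi}; so your displayed chain does not prove weak homogeneity. The paper's own one-line proof rests on the opposite monotonicity of the difference quotient through the origin, namely
\[
\frac{\varpi(x)-\varpi(0^+)}{x-0}\;\le\;\frac{\varpi(ax)-\varpi(0^+)}{ax-0},
\]
i.e.\ $t\mapsto\varpi(t)/t$ nonincreasing, which immediately yields $\varpi(x)\le a^{-1}\varpi(ax)$ and hence the bound $1/a$ in \eqref{erfenzhiyi}. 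This is the sense in which ``convex near $0^+$'' is used in Lemmas \ref{Oxlemma} and \ref{ruotux} (compare the hypothesis ``$\varpi'\ge 0$ nonincreasing'' in Lemma \ref{Oxlemma}), and it is the property actually enjoyed by the moduli employed in Section \ref{section6} (H\"older $x^{\{\ell\}}$ with $\{\ell\}\in(0,1)$, logarithmic H\"older, etc.), whose slopes through the origin decrease; it is the opposite of the convention you adopted.

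Under your (standard) reading of convexity the conclusion is in fact still true, but not by your argument: one must invoke the second defining property of a modulus of continuity, $\overline{\lim}_{x\to 0^+}x/\varpi(x)<+\infty$ from Definition \ref{d1}, which your proof never uses. Indeed, with $\varpi(0)=0$ and $t\mapsto\varpi(t)/t$ nondecreasing on $(0,\delta']$, the quantity $L:=\lim_{t\to 0^+}\varpi(t)/t=\inf_{t\in(0,\delta']}\varpi(t)/t$ is strictly positive by that property, while $\varpi(t)/t\le K:=\varpi(\delta')/\delta'$ there; hence $Lx\le\varpi(x)\le Kx$ near $0^+$ and
\[
\frac{\varpi(x)}{\varpi(ax)}\;\le\;\frac{Kx}{L\,ax}\;=\;\frac{K}{La}\;<\;+\infty,
\]
which gives \eqref{erfenzhiyi}. (Without that property the claim fails for genuinely convex functions: $\varpi(x)=e^{-1/x}$ is convex, increasing and tends to $0$, yet $\varpi(x)/\varpi(ax)\to+\infty$.) As written, therefore, your final step is false-directed and the proof has a genuine gap, and it also diverges from the paper's intended reading of the hypothesis.
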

\begin{proof}
	For $ x>0 $ sufficiently small, one verifies that
	\[\varpi \left( x \right) = x \cdot \frac{{\varpi \left( x \right) - \varpi \left( 0+ \right)}}{x-0} \leq x \cdot \frac{{\varpi \left( {ax} \right) - \varpi \left( 0+ \right)}}{{ax}-0} = a^{-1}\varpi \left( {ax} \right),\]
	for  $ 0<a<1 $, which leads to weak homogeneity
	\[	\mathop {\overline {\lim } }\limits_{x \to {0^ + }} \frac{{\varpi \left( x \right)}}{{\varpi \left( {ax} \right)}} \leq a^{-1} <  + \infty .\]
\end{proof}

\subsection{Proof of Theorem \ref{Theorem1}} \label{JACK}
\begin{proof}
	For the completeness of the analysis we shall give a very detailed proof. An outline of the strategy for the proof is provided: we firstly construct an approximation integral operator by the Fourier transform of  a compactly supported function, and then present certain  properties of the operator (note that these preparations are classical); finally, we estimate the approximation error in the sense of modulus of continuity.	
	
	Let\[K\left( x \right) = \frac{1}{{{{\left( {2\pi } \right)}^n}}}\int_{{\mathbb{R}^n}} {\widehat K\left( \xi \right){e^{\mathrm{i}\left\langle {x,\xi } \right\rangle }}{\rm d}\xi } ,\;\;x \in {\mathbb{C}^n}\]
	be an entire function, whose Fourier transform
	\[\widehat K\left( \xi  \right) = \int_{{\mathbb{R}^n}} {K\left( x \right){e^{ - \mathrm{i}\left\langle {x,\xi } \right\rangle }}{\rm d}x} ,\;\;\xi  \in {\mathbb{R}^n}\]
	is a smooth function with compact support, contained in the ball $ \left| \xi  \right| \leq 1 $, that satisfies $ \widehat K\left( \xi  \right) = \widehat K\left( { - \xi } \right) $ and
	\begin{equation}\label{3.3}
		{\partial ^\alpha }\widehat K\left( 0 \right) = \left\{ \begin{gathered}
			1,\;\;\alpha  = 0, \hfill \\
			0,\;\;\alpha  \ne 0. \hfill \\
		\end{gathered}  \right.
	\end{equation}
	Next, we assert that
	\begin{equation}\label{5}
		\left| {{\partial ^\beta }\mathcal{F}\big( {\widehat K\left( \xi  \right)} \big)\left( z \right)} \right|  \leq \frac{{c\left( {\beta ,p} \right)}}{{{{\left( {1 + \left| {\operatorname{Re} z} \right|} \right)}^p}}}{e^{\left| {\operatorname{Im} z} \right|}},\;\;\max \left\{ {1,\left|\beta\right|} \right\} \leq p \in \mathbb{R}.
	\end{equation}
	
	Note that we assume $ \widehat K \in C_0^\infty \left( {{\mathbb{R}^n}} \right) $ and $ \operatorname{supp} \widehat K \subseteq B\left( {0,1} \right) $, thus
	\begin{equation}\label{FK}
		\left| {{{\left( {1 + \left| z \right|} \right)}^k}{\partial ^\beta }\mathcal{F}\left( {\widehat K\left( \xi  \right)} \right)\left( z \right)} \right| \leq \sum\limits_{\left| \gamma  \right| \leq k} {\left| {{z^\gamma }{\partial ^\beta }\mathcal{F}\big( {\widehat K\left( \xi  \right)} \big)\left( z \right)} \right|}  = \sum\limits_{\left| \gamma  \right| \leq k} {\left| {{\partial ^{\beta  + \gamma }}\mathcal{F}\big( {\widehat K\left( \xi  \right)} \big)\left( z \right)} \right|},
	\end{equation}
	where $ \mathcal{F} $ represents the Fourier transform. Since $ {\partial ^{\beta  + \gamma }}\mathcal{F}\big( {\widehat K\left( \xi  \right)} \big)\left( z \right) \in C_0^\infty ( {\overline {B \left( {0,1} \right)}} ) $ does not change the condition, we only need to prove that
	\[\left| {\mathcal{F}\big( {\widehat K\left( \xi  \right)} \big)\left( z \right)} \right| \leq {c_k}{e^{\left| {\operatorname{Im} z} \right|}}.\]
	Obviously
	\[\left| {\mathcal{F}\big( {\widehat K\left( \xi  \right)} \big)\left( z \right)} \right| \leq \frac{1}{{{{\left( {2\pi } \right)}^n}}}\int_{{\mathbb{R}^n}} {\big| {\widehat K\left( \xi  \right)} \big|{e^{ - \left\langle {\operatorname{Im} z,\xi } \right\rangle }}{\rm d}\xi }  \leq \frac{c}{{{{\left( {2\pi } \right)}^n}}}\int_{B\left( {0,1} \right)} {{e^{\left| {\left\langle {\operatorname{Im} z,\xi } \right\rangle } \right|}}{\rm d}\xi }  \leq c{e^{\left| {\operatorname{Im} z} \right|}},\]
	where $ c>0 $ is independent of $ n $. Then assertion \eqref{5} is proved by recalling  \eqref{FK}.
	
	The inequality in \eqref{5} is usually called the Paley-Wiener Theorem, see also Chapter III in \cite{Stein}.	As we will see later, it plays an important role in the subsequent verification of definitions, integration by parts and the translational feasibility according to Cauchy's integral formula.
	
	Next we assert that $ K:{\mathbb{C}^n} \to \mathbb{R} $ is a real analytic function with the following property
	\begin{equation}\label{3.6}
		\int_{{\mathbb{R}^n}} {{{\left( {u + \mathrm{i}v} \right)}^\alpha }{\partial ^\beta }K\left( {u + \mathrm{i}v} \right){\rm d}u}  = \left\{ \begin{aligned}
			&{\left( { - 1} \right)^{\left| \alpha  \right|}}\alpha !,&\alpha  = \beta , \hfill \\
			&0,&\alpha  \ne \beta , \hfill \\
		\end{aligned}  \right.
	\end{equation}
	for $ u,v \in {\mathbb{R}^n} $ and multi-indices $ \alpha ,\beta  \in {\mathbb{N}^n} $. In order to prove assertion  \eqref{3.6}, we first consider proving the following for $ x\in \mathbb{R}^n $:
	\begin{equation}\label{3.7}
		\int_{{\mathbb{R}^n}} {{x^\alpha }{\partial ^\beta }K\left( x \right){\rm d}x}  = \left\{ \begin{aligned}
			&{\left( { - 1} \right)^{\left| \alpha  \right|}}\alpha !,&\alpha  = \beta  \hfill, \\
			&0,&\alpha  \ne \beta  \hfill. \\
		\end{aligned}  \right.
	\end{equation}
	{\bf{Case1:}} If $ \alpha  = \beta  $, then
	\begin{align*}
		\int_{{\mathbb{R}^n}} {{x^\alpha }{\partial ^\beta }K\left( x \right){\rm d}x}  &= \int_{{\mathbb{R}^n}} {\Big( {\prod\limits_{j = 1}^n {x_j^{{\alpha _j}}} } \Big)\Big( {\prod\limits_{j = 1}^n {\partial _{{x_j}}^{{\alpha _j}}} } \Big)K\left( x \right){\rm d}x} \\
		&= {\left( { - 1} \right)^{{\alpha _1}}}{\alpha _1}!\int_{{\mathbb{R}^{n - 1}}} {\Big( {\prod\limits_{j = 2}^n {x_j^{{\alpha _j}}} } \Big)\Big( {\prod\limits_{j = 2}^n {\partial _{{x_j}}^{{\alpha _j}}} } \Big)K\left( x_2,\cdots,x_n \right){\rm d}x_2 \cdots {\rm d}x_n} \\
		&=  \cdots  = {\left( { - 1} \right)^{{\alpha _1} +  \cdots  + {\alpha _n}}}{\alpha _1}! \cdots {\alpha _n}!\int_\mathbb{R} {K\left( x_n \right){\rm d}x_n} \\
		& = {\left( { - 1} \right)^{\left| \alpha  \right|}}\alpha !\widehat K\left( 0 \right) = {\left( { - 1} \right)^{\left| \alpha  \right|}}\alpha !.
	\end{align*}
	{\bf{Case2:}} There exists some $ {\alpha _j} \leq {\beta _j} - 1 $, let $ j=1 $ without loss of generality. Then
	\begin{align*}
		\int_{{\mathbb{R}^n}} {{x^\alpha }{\partial ^\beta }K\left( x \right){\rm d}x}  &= \int_{{\mathbb{R}^n}} {\Big( {\prod\limits_{j = 1}^n {x_j^{{\alpha _j}}} } \Big)\Big( {\prod\limits_{j = 1}^n {\partial _{{x_j}}^{{\beta _j}}} } \Big)K\left( x \right){\rm d}x} \\
		&= {\left( { - 1} \right)^{{\beta _1} - {\alpha _1}}}\int_{{\mathbb{R}^n}} {\Big( {\prod\limits_{j = 2}^n {x_j^{{\alpha _j}}} } \Big)\Big( {\prod\limits_{j = 2}^n {\partial _{{x_j}}^{{\beta _j}}} } \Big)\partial _{{x_1}}^{{\beta _1} - {\alpha _1}}K\left( x \right){\rm d}x}=0.
	\end{align*}
	{\bf{Case3:}} Now we have $ {\alpha _1} \geq {\beta _1} $, and some $ {\alpha _j} \geq {\beta _j} + 1 $ (otherwise $ \alpha  = \beta  $). Let $ j=1 $ without loss of generality. At this time we first prove a conclusion according to \eqref{3.3}. Since
	\[{\partial ^\alpha }\widehat K\left( 0 \right) = {\left( { - \mathrm{i}} \right)^{\left| \alpha  \right|}}\int_{{\mathbb{R}^n}} {{x^\alpha }K\left( x \right){\rm d}x}  = 0,\;\;\alpha  \ne 0,\]
 it follows that
	\begin{displaymath}
		\int_{{\mathbb{R}^n}} {{x^\alpha }K\left( x \right){\rm d}x}  = 0,\;\;\alpha  \ne 0.
	\end{displaymath}
	Hence, we arrive at
	\[\int_{{\mathbb{R}^n}} {{x^\alpha }{\partial ^\beta }K\left( x \right){\rm d}x}  = {\left( { - 1} \right)^{\sum\limits_{j = 1}^n {\left( {{\beta _j} - {\alpha _j}} \right)} }}\int_{{\mathbb{R}^n}} {\left( {x_1^{{\alpha _1} - {\beta _1}}} \right)\Big( {\prod\limits_{j = 2}^n {x_j^{{\alpha _j} - {\beta _j}}} } \Big)K\left( x \right){\rm d}x}  = 0.\]
	This proves \eqref{3.7}. Next, we will consider a complex translation of \eqref{3.7} and prove that
	\begin{equation}\notag
		\int_{{\mathbb{R}^n}} {{{\left( {u + \mathrm{i}v} \right)}^\alpha }{\partial ^\beta }K\left( {u + \mathrm{i}v} \right){\rm d}u}  = \left\{ \begin{aligned}
			&{\left( { - 1} \right)^{\left| \alpha  \right|}}\alpha !,&\alpha  = \beta,  \hfill \\
			&0,&\alpha  \ne \beta . \hfill \\
		\end{aligned}  \right.
	\end{equation}
	Actually one only needs to pay attention to \eqref{5}, and the proof is completed according to Cauchy's integral formula.
	
	Finally, we only prove that
	\begin{equation}\label{3.10}
		{S_r}{p} = {p},\;\;{p}:{\mathbb{R}^n} \to \mathbb{R}.
	\end{equation}
	In fact, only real polynomials need to be considered
	\[{p} = {x^\alpha } = \prod\limits_{j = 1}^n {x_j^{{\alpha _j}}} \ .\]
	It can be obtained by straight calculation
	\begin{align*}
		{S_r}{p} &= {r^{ - n}}\int_{{\mathbb{R}^n}} {K\left( {\frac{{x - y}}{r}} \right)\prod\limits_{j = 1}^n {y_j^{{\alpha _j}}} {\rm d}y}  = \int_{{\mathbb{R}^n}} {K\left( z \right)\prod\limits_{j = 1}^n {{{\left( {r{z_j} + {x_j}} \right)}^{{\alpha _j}}}} {\rm d}z} \\
		& = \Big( {\prod\limits_{j = 1}^n {x_j^{{\alpha _j}}} } \Big)\int_{{\mathbb{R}^n}} {K\left( z \right){\rm d}z}  + \sum\limits_\gamma  {{\varphi _\gamma }\left( {r,x} \right)\int_{{\mathbb{R}^n}} {{z^\gamma }K\left( z \right){\rm d}z} }  = \prod\limits_{j = 1}^n {x_j^{{\alpha _j}}}  = {p},
	\end{align*}
	where $ {{\varphi _\gamma }\left( {r,x} \right)} $ are coefficients independent of  $ z $.
	
	As to the complex case one only needs to perform complex translation to obtain
	\begin{equation}\notag
		{p}\left( {u;\mathrm{i}v} \right) = {S_r}{p}\left( {u + \mathrm{i}v} \right) = \int_{{\mathbb{R}^n}} {K\left( {\mathrm{i}{r^{ - 1}}v - \eta } \right){p}\left( {u;r\eta } \right){\rm d}\eta }.
	\end{equation}

	The above preparations are classical, see also \cite{salamon,MR2071231}. Next we begin to prove the Jackson type approximation theorem via only modulus of continuity. 	We will make use of \eqref{3.10} in case of the Taylor polynomial
	\[{p_k}\left( {x;y} \right): = {P_{f,k}}\left( {x;y} \right) = \sum\limits_{\left| \alpha  \right| \leq k} {\frac{1}{{\alpha !}}{\partial ^\alpha }f\left( x \right){y^\alpha }} \]
	of $ f $ with $ k\in \mathbb{N} $. Note that
	\[\left| {f\left( {x + y} \right) - {p_k}\left( {x;y} \right)} \right| = \Bigg| {\int_0^1 {k{{\left( {1 - t} \right)}^{k - 1}}\sum\limits_{\left| \alpha  \right| = k} {\frac{1}{{\alpha !}}\left( {{\partial ^\alpha }f\left( {x + ty} \right) - {\partial ^\alpha }f\left( x \right)} \right){y^\alpha }{\rm d}t} } } \Bigg|\]
	for every $ x,y \in {\mathbb{R}^n} $.
	
	Define the following domains to partition $ \mathbb{R}^n $:
	\[{\Omega _1}: = \left\{ {\eta  \in {\mathbb{R}^n}:\left| \eta  \right| < \delta{r^{ - 1}}} \right\},\;\;{\Omega _2}: = \left\{ {\eta  \in {\mathbb{R}^n}:\left| \eta  \right| \geq \delta{r^{ - 1}}} \right\}.\]
	We have to use different estimates in the above two domains, which are abstracted as follows.		If $ 0 < \left| y \right| < \delta $,  we obtain that
	\begin{align}
		\left| {f\left( {x + y} \right) - {p_k}\left( {x;y} \right)} \right| \leq{}& \int_0^1 {k{{\left( {1 - t} \right)}^{k - 1}}\sum\limits_{\left| \alpha  \right| = k} {\frac{1}{{\alpha !}} \cdot {{\left[ {{\partial ^\alpha }f} \right]}_\varpi }\varpi \left( {t\left| y \right|} \right) \cdot \left| {{y^\alpha }} \right|{\rm d}t} } \notag \\
		\leq{}& c\left( {n,k} \right){\left\| f \right\|_\varpi }\int_0^1 {\varpi \left( {t\left| y \right|} \right){\rm d}t}  \cdot \left| {{y^\alpha }} \right|\notag \\
		\leq{}& c\left( {n,k} \right){\left\| f \right\|_\varpi }\varpi \left( {\left| y \right|} \right)\left| {{y^\alpha }} \right|\notag \\
		\label{e1}\leq{}& c\left( {n,k} \right){\left\| f \right\|_\varpi }\varpi \left( {\left| y \right|} \right){\left| y \right|^k}.
	\end{align}
	If $ \left| y \right| \geq \delta $, one easily arrives at
	\begin{align}
		\left| {f\left( {x + y} \right) - {p_k}\left( {x;y} \right)} \right| \leq{}& \int_0^1 {k{{\left( {1 - t} \right)}^{k - 1}}\sum\limits_{\left| \alpha  \right| = k} {\frac{1}{{\alpha !}} \cdot 2{{\left| {{\partial ^\alpha }f} \right|}_{{C^0}}} \cdot \left| {{y^\alpha }} \right|{\rm d}t} } \notag \\
		\leq{}& c\left( {n,k} \right){\left\| f \right\|_\varpi }\left| {{y^\alpha }} \right|\notag \\
		\label{e2}	\leq{}& c\left( {n,k} \right){\left\| f \right\|_\varpi }{\left| y \right|^k}.
	\end{align}
	The H\"{o}lder inequality has been used in \eqref{e1} and \eqref{e2} with $ k \geq 1,{\alpha _i} \geq 1, \mu_i=k/\alpha_i\geq 1 $ without loss of generality:
	\[\left| {{y^\alpha }} \right| = \prod\limits_{i = 1}^n {{{\left| {{y_i}} \right|}^{{\alpha _i}}}}  \leq \sum\limits_{i = 1}^n {\frac{1}{{{\mu _i}}}{{\left| {{y_i}} \right|}^{{\alpha _i}{\mu _i}}}}  \leq \sum\limits_{i = 1}^n {{{\left| {{y_i}} \right|}^k}}  \leq \sum\limits_{i = 1}^n {{{\left| y \right|}^k}}  = n{\left| y \right|^k}.\]
	
	Now let $ x=u+\mathrm{i}v $ with $ u,v \in {\mathbb{R}^n} $ and $ \left| v \right| \leq r $. Fix $ p = n + k + 2 $, and let $ c = c\left( {n,k} \right) > 0 $ be a universal constant, then it follows that
	\begin{align*}
		\left| {{S_r}f\left( {u + \mathrm{i}v} \right) - {p_k}\left( {u;\mathrm{i}v} \right)} \right| \leq{}& \int_{{\mathbb{R}^n}} {K\left( {\mathrm{i}{r^{ - 1}}v - \eta } \right)\left| {f\left( {u + r\eta } \right) - {p_k}\left( {u;r\eta } \right)} \right|{\rm d}\eta } \notag \\
		\leq{}& c\int_{{\mathbb{R}^n}} {\frac{{{e^{{r^{ - 1}}v}}}}{{{{\left( {1 + \left| \eta  \right|} \right)}^p}}}\left| {f\left( {u + r\eta } \right) - {p_k}\left( {u;r\eta } \right)} \right|{\rm d}\eta } \notag \\
		\leq{}& c\int_{{\mathbb{R}^n}} {\frac{1}{{{{\left( {1 + \left| \eta  \right|} \right)}^p}}}\left| {f\left( {u + r\eta } \right) - {p_k}\left( {u;r\eta } \right)} \right|{\rm d}\eta } \notag \\
		={}& c\int_{{\Omega _1}}  +  \int_{{\Omega _2}} {\frac{1}{{{{\left( {1 + \left| \eta  \right|} \right)}^p}}}\left| {f\left( {u + r\eta } \right) - {p_k}\left( {u;r\eta } \right)} \right|{\rm d}\eta } \notag \\
		: ={}& c\left( {{I_1} + {I_2}} \right).
	\end{align*}
	As it can be seen later, $ I_1 $ is the main part while $ I_2 $ is the remainder.
	
	Recall Remark \ref{Remarksemi} and \eqref{Ox}. Hence the following holds due to \eqref{e1}
	\begin{align}\label{I1}
		{I_1} ={}& \int_{{\Omega _1}} {\frac{1}{{{{\left( {1 + \left| \eta  \right|} \right)}^p}}}\left| {f\left( {u + r\eta } \right) - {p_k}\left( {u;r\eta } \right)} \right|{\rm d}\eta }\notag \\
		\leq{}& \int_{\left| \eta  \right| < \delta{r^{ - 1}}} {\frac{1}{{{{\left( {1 + \left| \eta  \right|} \right)}^p}}} \cdot c{{\left\| f \right\|}_\varpi }\varpi \left( {\left| {r\eta } \right|} \right){{\left| {r\eta } \right|}^k}{\rm d}\eta } \notag \\
		\leq{}& \int_{\left| \eta  \right| < \delta{r^{ - 1}}} {\frac{1}{{{{\left( {1 + \left| \eta  \right|} \right)}^p}}} \cdot c{{\left\| f \right\|}_\varpi }\varpi \left( { {r } } \right) \psi(|\eta|) {{\left| {r\eta } \right|}^k}{\rm d}\eta } \notag \\
		\leq{}& c{\left\| f \right\|_\varpi }{r^k}{\varpi(r)}\int_0^{\delta{r^{ - 1}}} {\frac{{{w^{k + n }}}}{{{{\left( {1 + w} \right)}^p}}}{\rm d}w} \notag \\
		\leq{}& c{\left\| f \right\|_\varpi }{r^k}{\varpi(r)}\int_0^{+\infty} {\frac{{{w^{k + n }}}}{{{{\left( {1 + w} \right)}^p}}}{\rm d}w} \notag \\
		\leq{}& c{\left\| f \right\|_\varpi }{r^k}{\varpi(r)}.
	\end{align}
	In view of \eqref{e2}, we have
	\begin{align}\label{I2}
		{I_2} ={}& \int_{{\Omega _2}} {\frac{1}{{{{\left( {1 + \left| \eta  \right|} \right)}^p}}}\left| {f\left( {u + r\eta } \right) - {p_k}\left( {u;r\eta } \right)} \right|{\rm d}\eta } \notag \\
		\leq{}& \int_{\left| \eta  \right| \geq \delta{r^{ - 1}}} {\frac{1}{{{{\left( {1 + \left| \eta  \right|} \right)}^p}}} \cdot c{{\left\| f \right\|}_\varpi }{{\left| {r\eta } \right|}^k}{\rm d}\eta } \notag \\
		\leq{}& c{\left\| f \right\|_\varpi }{r^k}\int_{\delta{r^{ - 1}}}^{ + \infty } {\frac{{{w^{k + n - 1}}}}{{{{\left( {1 + w} \right)}^p}}}{\rm d}w} \notag \\
		\leq{}& c{\left\| f \right\|_\varpi }{r^k}\int_{\delta{r^{ - 1}}}^{ + \infty } {\frac{1}{{{w^{p - k - n + 1}}}}{\rm d}w} \notag \\
		\leq{}& c{\left\| f \right\|_\varpi }{r^{k+2}}.
	\end{align}
	By \eqref{I1} and \eqref{I2}, we finally arrive at
	\begin{equation}\notag
		\left| {{S_r}f\left( {u + \mathrm{i}v} \right) - {p_k}\left( {u;\mathrm{i}v} \right)} \right| \leq c{\left\| f \right\|_\varpi }{r^k} {\varpi(r)}
	\end{equation}
	due to $ \mathop {\overline {\lim } }\limits_{r \to {0^ + }} r/{\varpi }\left( r \right) <  + \infty  $ in Definition \ref{d1}.	This proves Theorem \ref{Theorem1} for the case $ |\alpha| = 0 $. As to $ |\alpha| \ne 0 $, the result follows from the fact that $ {S_r} $ commutes with $ {\partial ^\alpha } $. We therefore finish the proof of Theorem \ref{Theorem1}.
\end{proof}

\subsection{Proof of Corollary \ref{coro1}}\label{proofcoro1}
\begin{proof}
	Only the analysis of case $ \left| \alpha  \right| = 0 $ is given. In view of  Theorem \ref{Theorem1} and \eqref{e1}, we obtain that
	\begin{align}
		\left| {{S_r}f\left( x \right) - f\left( x \right)} \right| \leq{}& \left| {{S_r}f\left( x \right) - {P_{f,k}}\left( {\operatorname{Re} x;\mathrm{i}\operatorname{Im} x} \right)} \right| + \left| {{P_{f,k}}\left( {\operatorname{Re} x;\mathrm{i}\operatorname{Im} x} \right) - f\left( x \right)} \right|\notag \\
		\label{Srf-f}	\leq{}& c_*{\left\| f \right\|_\varpi }{r^k}\varpi(r) ,
	\end{align}
	where the constant $ c_*>0 $ depends on $ n $ and $ k $.
	Further, by \eqref{Srf-f} we have
	\begin{align*}
		\left| {{S_r}f\left( x \right)} \right| \leq{}& \left| {{S_r}f\left( x \right) - f\left( x \right)} \right| + \left| {f\left( x \right)} \right|\notag \\
		\leq{}& c_*{\left\| f \right\|_\varpi }{r^k}\varpi(r) + {\left\| f \right\|_\varpi } \leq {c^ * }{\left\| f \right\|_\varpi },
	\end{align*}
	provided a constant $ c^*>0 $ depending on $ n,k $ and $ \varpi $.	This completes the proof.
\end{proof}

\subsection{Proof of Corollary \ref{coro2}}\label{proofcoco2}
\begin{proof}
	It is easy to verify that
	\begin{align*}
		{S_r}f\left( {x + 1} \right) ={}& \frac{1}{{{r^n}}}\int_{{\mathbb{R}^n}} {K\left( {\frac{{x - \left( {y - 1} \right)}}{r}} \right)f\left( y \right){\rm d}y} \notag \\
		={}& \frac{1}{{{r^n}}}\int_{{\mathbb{R}^n}} {K\left( {\frac{{x - u}}{r}} \right)f\left( {u + 1} \right){\rm d}u} \notag \\
		={}& \frac{1}{{{r^n}}}\int_{{\mathbb{R}^n}} {K\left( {\frac{{x - u}}{r}} \right)f\left( u \right){\rm d}u}  = {S_r}f\left( x \right).
	\end{align*}
	According to Fubini's theorem, we obtain
	\begin{align*}
		\int_{{\mathbb{T}^n}} {{S_r}f\left( x \right)dx}  ={}& \frac{1}{{{r^n}}}\int_{{\mathbb{R}^n}} {\int_{{\mathbb{T}^n}} {K\left( {\frac{{x - y}}{r}} \right)f\left( y \right){\rm d}y} }  \notag \\
		={}& \frac{1}{{{r^n}}}\int_{{\mathbb{R}^n}} {K\left( {\frac{m}{r}} \right)\left( {\int_{{\mathbb{T}^n}} {f\left( {x + m} \right){\rm d}x} } \right) {\rm d}m}  = 0.
	\end{align*}
	
	This completes the proof.
\end{proof}

\subsection{Asymptotic analysis}
Here we provide some useful asymptotic results, all of which can be proved by L'Hopital's rule or by integration by parts,  thus the proof is omitted here.
\begin{lemma}\label{duochongduishu}
	Let $ \varrho \in \mathbb{N}^+ $, $ \lambda>1 $ and some $ M>0 $ sufficiently large be fixed. Then as $ X\to +\infty $,
	\[\int_M^X {\frac{1}{{(\ln z) \cdots {{(\underbrace {\ln  \cdots \ln }_\varrho z)}^\lambda }}}{\rm d}z}  = {\mathcal{O}^\# }\Bigg( {\frac{X}{{(\ln X) \cdots {{(\underbrace {\ln  \cdots \ln }_\varrho X)}^\lambda }}}} \Bigg).\]
\end{lemma}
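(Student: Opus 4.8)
The plan is to recognize the integrand as $1/g(z)$ for a \emph{slowly varying} denominator and to exploit that slow variation. Set
\[
g(z) := (\ln z)(\ln\ln z)\cdots\big(\underbrace{\ln\cdots\ln}_{\varrho-1} z\big)\big(\underbrace{\ln\cdots\ln}_{\varrho} z\big)^{\lambda},
\]
and write $\ell_j(z)$ for the $j$-fold iterated logarithm, so $g(z)=\ell_1(z)\cdots\ell_{\varrho-1}(z)\,\ell_\varrho(z)^{\lambda}$. Choosing $M>0$ large enough that every $\ell_1,\dots,\ell_\varrho$ exceeds $1$ on $[M,+\infty)$, the function $g$ is positive, increasing, and subpolynomial in the sense that $g(z)=o(z^{\epsilon})$ for every $\epsilon>0$; in particular both
\[
F(X):=\int_M^X \frac{{\rm d}z}{g(z)}\qquad\text{and}\qquad G(X):=\frac{X}{g(X)}
\]
tend to $+\infty$ as $X\to+\infty$ (for $F$ use $1/g(z)\ge z^{-1/2}$ for $z$ large).

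The key computation is the logarithmic derivative of $g$. Since $\ell_j'(z)=\big(z\,\ell_1(z)\cdots\ell_{j-1}(z)\big)^{-1}$ by the chain rule, differentiating $\ln g=\sum_{j=1}^{\varrho-1}\ln \ell_j+\lambda\ln\ell_\varrho$ yields
\[
\frac{z\,g'(z)}{g(z)}=\sum_{j=1}^{\varrho-1}\frac{1}{\ell_1(z)\cdots\ell_j(z)}+\frac{\lambda}{\ell_1(z)\cdots\ell_\varrho(z)}\longrightarrow 0,\qquad z\to+\infty.
\]
With this in hand, L'Hopital's rule applied to the $\infty/\infty$ quotient $F(X)/G(X)$ gives
\[
\lim_{X\to+\infty}\frac{F(X)}{G(X)}=\lim_{X\to+\infty}\frac{1/g(X)}{\dfrac{1}{g(X)}\Big(1-\dfrac{X g'(X)}{g(X)}\Big)}=\lim_{X\to+\infty}\frac{1}{\,1-X g'(X)/g(X)\,}=1 ,
\]
so in particular $F(X)=\mathcal{O}^{\#}\!\big(X/g(X)\big)$, which is precisely the assertion of Lemma~\ref{duochongduishu}.

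For readers preferring to avoid L'Hopital, the same conclusion follows by integration by parts: $F(X)=\dfrac{X}{g(X)}-\dfrac{M}{g(M)}+\displaystyle\int_M^X \dfrac{1}{g(z)}\cdot\dfrac{z g'(z)}{g(z)}\,{\rm d}z$. Since $g'\ge 0$ on $[M,+\infty)$ the integral term is nonnegative, giving $F(X)\ge X/g(X)-M/g(M)$ and hence the lower bound once $X/g(X)$ exceeds the constant; for the upper bound, fix $\eta\in(0,1)$ and $M_\eta\ge M$ with $z g'(z)/g(z)\le\eta$ for $z\ge M_\eta$ (possible by the displayed limit), so that $\int_{M_\eta}^X \frac{1}{g(z)}\cdot\frac{zg'(z)}{g(z)}\,{\rm d}z\le\eta\,F(X)$ and therefore $(1-\eta)F(X)\le X/g(X)+O(1)$, whence $F(X)=\mathcal{O}(X/g(X))$.

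I expect no genuine obstacle here: the only points requiring care are the bookkeeping in the iterated-logarithm derivative formula and the (harmless) choice of $M$ large enough that $g$ is positive, increasing, and subpolynomial, so that both the $\infty/\infty$ form of L'Hopital and the sign of the error term in the integration-by-parts version are legitimate. The generality over $\varrho\in\mathbb{N}^+$ costs nothing, since the finitely many extra factors only contribute additional terms to $z g'(z)/g(z)$ that still vanish at infinity.
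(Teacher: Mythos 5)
Your proof is correct and follows exactly the route the paper indicates for this lemma (which it states without detailed proof, noting it "can be proved by L'Hopital's rule or by integration by parts"): you compute $zg'(z)/g(z)\to 0$ for the slowly varying denominator and then apply L'Hopital to $F/G$, with the integration-by-parts argument as a self-contained alternative. Both versions check out, including the two-sided bounds needed for the $\mathcal{O}^{\#}$ conclusion.
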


\begin{lemma}\label{erheyi}
	Let $ 0 <\sigma<1 $, $ \lambda>1 $ and some $ M>0 $ sufficiently large be fixed. Then as $ X\to +\infty $,
	\begin{equation}\label{erheyi1}
		\int_M^X {\frac{1}{{{z^\sigma }{{\left( {\ln z} \right)}^\lambda }}}{\rm d}z}  = {\mathcal{O}^\# }\left( {\frac{{{X^{1 - \sigma }}}}{{{{\left( {\ln X} \right)}^\lambda }}}} \right),
	\end{equation}
	and
	\begin{equation}\label{erheyi2}
		\int_X^{ + \infty } {\frac{1}{{{z^{1 + \sigma }}{{\left( {\ln z} \right)}^\lambda }}}{\rm d}z}  = {\mathcal{O}^\# }\left( {\frac{1}{{{X^\sigma }{{\left( {\ln X} \right)}^\lambda }}}} \right).
	\end{equation}
\end{lemma}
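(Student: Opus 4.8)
The plan is to handle both estimates by the same elementary device: in an integration by parts, integrate the pure power and differentiate the logarithmic factor, and then absorb the resulting remainder integral by exploiting that $(\ln z)^{-1}$ is uniformly small on the range in question. Equivalently one could apply L'H\^{o}pital's rule to the quotients $I(X)/\big(X^{1-\sigma}(\ln X)^{-\lambda}\big)$ and $J(X)/\big(X^{-\sigma}(\ln X)^{-\lambda}\big)$, of indeterminate type $\infty/\infty$ and $0/0$ respectively, obtaining the limits $1/(1-\sigma)$ and $1/\sigma$, which already yield the asserted $\mathcal{O}^{\#}$ relations; I will carry out the integration-by-parts route below since it exhibits the two-sided bounds explicitly.

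For \eqref{erheyi1}, put $I(X):=\int_M^X z^{-\sigma}(\ln z)^{-\lambda}\,{\rm d}z$ and integrate by parts with $z^{1-\sigma}/(1-\sigma)$ a primitive of $z^{-\sigma}$:
\begin{equation}\notag
	I(X)=\frac{X^{1-\sigma}}{(1-\sigma)(\ln X)^{\lambda}}-\frac{M^{1-\sigma}}{(1-\sigma)(\ln M)^{\lambda}}+\frac{\lambda}{1-\sigma}\int_M^X\frac{1}{z^{\sigma}(\ln z)^{\lambda+1}}\,{\rm d}z.
\end{equation}
On $[M,X]$ one has $(\ln z)^{-\lambda-1}\leq(\ln M)^{-1}(\ln z)^{-\lambda}$, so the remainder integral is at most $(\ln M)^{-1}I(X)$; since $M$ is fixed and large we may assume $\lambda/((1-\sigma)\ln M)\leq\tfrac{1}{2}$, whence dropping the nonpositive constant and solving for $I(X)$ gives $I(X)\leq\tfrac{2}{1-\sigma}X^{1-\sigma}(\ln X)^{-\lambda}$. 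Conversely the remainder integral is nonnegative and $X^{1-\sigma}(\ln X)^{-\lambda}\to+\infty$ dominates the fixed constant, so $I(X)\geq\tfrac{1}{2(1-\sigma)}X^{1-\sigma}(\ln X)^{-\lambda}$ for $X$ large. These two bounds are precisely $I(X)=\mathcal{O}^{\#}\big(X^{1-\sigma}(\ln X)^{-\lambda}\big)$.

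For \eqref{erheyi2}, put $J(X):=\int_X^{+\infty}z^{-1-\sigma}(\ln z)^{-\lambda}\,{\rm d}z$, which converges since $1+\sigma>1$. Integrating by parts with $-z^{-\sigma}/\sigma$ a primitive of $z^{-1-\sigma}$, and using $z^{-\sigma}(\ln z)^{-\lambda}\to0$ at infinity, one gets $J(X)=\tfrac{1}{\sigma}X^{-\sigma}(\ln X)^{-\lambda}-\tfrac{\lambda}{\sigma}\int_X^{+\infty}z^{-1-\sigma}(\ln z)^{-\lambda-1}\,{\rm d}z$. The last integral is nonnegative, so $J(X)\leq\tfrac{1}{\sigma}X^{-\sigma}(\ln X)^{-\lambda}$ at once; and since $(\ln z)^{-\lambda-1}\leq(\ln X)^{-1}(\ln z)^{-\lambda}$ on $[X,\infty)$ it is also at most $(\ln X)^{-1}J(X)$, so $J(X)\big(1+\lambda/(\sigma\ln X)\big)\geq\tfrac{1}{\sigma}X^{-\sigma}(\ln X)^{-\lambda}$ and hence $J(X)\geq\tfrac{1}{2\sigma}X^{-\sigma}(\ln X)^{-\lambda}$ once $X$ is large enough that $\lambda/(\sigma\ln X)\leq1$. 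This is the claimed $\mathcal{O}^{\#}$ estimate.

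The computations are entirely routine; the only point requiring attention is that the conclusion must be two-sided ($\mathcal{O}^{\#}$) rather than one-sided ($\mathcal{O}$), which is why I keep track of the sign of the logarithmic remainder and restrict to $X$ large, using also that $M$ is large, so that that remainder is a genuine fraction of the main term. I do not anticipate any real obstacle.
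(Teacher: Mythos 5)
Your argument is correct, and it is exactly the route the paper has in mind: the authors omit the proof of this lemma, remarking only that it follows from L'H\^{o}pital's rule or integration by parts, which is precisely what you carry out (with the two-sided bounds made explicit by absorbing the logarithmic remainder for $M$, resp.\ $X$, large). No discrepancies to report.
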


\subsection{KAM theorem for quantitative estimates}\label{Appsalamon}
Here we give a KAM theorem for quantitative estimates, which is used in  Theorem \ref{theorem1} in this paper. See Theorem 1 in Salamon's paper \cite{salamon} for details.
\begin{theorem}\label{appendix}
	Let $ n \geq 2, \tau > n - 1,  0 < \theta < 1$, and $ M \geq 1 $ be given. Then there are positive constants $ \delta_* $ and $ c $ such that $ c\delta_*\leq1/2 $ and the following holds for every $ 0 < r^* \leq 1 $ and every $ \omega\in\mathbb{R}^n $ that satisfies \eqref{dio} (for $ \tau>n-1 $).
	
	Suppose that $ H(x, y) $ is a real analytic Hamiltonian function defined in the strip
	$ \left| {\operatorname{Im} x} \right| \leq {r^ * },\left| y \right| \leq {r^ * } $, which is of period $ 1 $ in the variables $ {x_1}, \ldots ,{x_n} $ and satisfies
	\begin{align*}
		\left| {H\left( {x,0} \right) - \int_{{\mathbb{T}^n}} {H\left( {\xi ,0} \right)d\xi } } \right| &\leq {\delta ^ * }{r^ * }^{2\tau  + 2},\notag\\
		\left| {{H_y}\left( {x,0} \right) - \omega } \right| &\leq {\delta ^ * }{r^ * }^{\tau  + 1},\notag\\
		\left| {{H_{yy}}\left( {x,y} \right) - Q\left( {x,y} \right)} \right| &\leq \frac{{c{\delta ^ * }}}{{2M}},\notag
	\end{align*}
	for $ \left| {\operatorname{Im} x} \right| \leq r^*,\left| y \right| \leq r^* $, where $ 0 < {\delta ^ * } \leq {\delta _ * } $, and $ Q\left( {x,y} \right) \in {\mathbb{C}^{n \times n}} $ is a symmetric
	(not necessarily analytic) matrix valued function in the strip $ \left| {\operatorname{Im} x} \right| \leq r,\left| y \right| \leq r $
	and satisfies in this domain
	\[\left| {Q\left( z \right)} \right| \leq M,\;\;\left| {{{\left( {\int_{{\mathbb{T}^n}} {Q\left( {x,0} \right){\rm d}x} } \right)}^{ - 1}}} \right| \leq M.\]
	Then there exists a real analytic symplectic transformation $ z = \phi \left( \zeta  \right) $ of the
	form
	\[z = \left( {x,y} \right),\;\;\zeta  = \left( {\xi ,\eta } \right),\;\;x = u\left( \xi  \right),\;\;y = v\left( \xi  \right) + u_\xi ^{\top}{\left( \xi  \right)^{ - 1}}\eta \]
	mapping the strip $ \left| {\operatorname{Im} \xi } \right| \leq \theta r^*,\left| \eta  \right| \leq \theta r^* $ into $ \left| {\operatorname{Im} x} \right| \leq r^*,\left| y \right| \leq r^* $, such that $ u\left( \xi  \right) - \xi  $ and $ v\left( \xi  \right) $ are of period $ 1 $ in all variables and the Hamiltonian function $ K: = H \circ \phi  $ satisfies
	\[{K_\xi }\left( {\xi ,0} \right) = 0,\;\;{K_\eta }\left( {\xi ,0} \right) = \omega .\]
	Moreover, $ \phi $ and $ K $ satisfy the estimates
	\begin{align*}
		&\left| {\phi \left( \zeta  \right) - \zeta } \right| \leq c{\delta ^ * }\left( {1 - \theta } \right){r^ * },\;\;\left| {{\phi _\zeta }\left( \zeta  \right) - \mathbb{I}} \right| \leq c{\delta ^ * },\\
		&\left| {{K_{\eta \eta }}\left( \zeta  \right) - Q\left( \zeta  \right)} \right| \leq \frac{{c{\delta ^ * }}}{M},\\
		&\left| {v \circ {u^{ - 1}}\left( x \right)} \right| \leq c{\delta ^ * }{r^ * }^{\tau  + 1},
	\end{align*}
	for $ \left| {\operatorname{Im} \xi } \right| \leq \theta r^*,\left| \eta  \right| \leq \theta r^* $, and $ \left| {\operatorname{Im} x} \right| \leq \theta r^* $.
\end{theorem}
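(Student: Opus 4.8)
The plan is to run the classical frequency-preserving KAM (Kolmogorov normal form) iteration and to track all constants quantitatively; this is precisely Theorem~1 of Salamon \cite{salamon}, and the scheme below is how it is established there. The core is \emph{one KAM step}: given $H$ with its error measured by $\delta^*$, I look for a symplectic transformation $\phi$ generated by a function of the form $S(x,\eta)=\langle x,\eta\rangle+U(x)+\langle V(x),\eta\rangle$, which produces a map $x=u(\xi)$, $y=v(\xi)+u_\xi^{\top}(\xi)^{-1}\eta$ with $u-\operatorname{id}$ and $v$ depending linearly on $U,V$. Writing $K=H\circ\phi$ and collecting the terms of order $0$ and $1$ in $\eta$ on the set $\eta=0$, one is led to the two homological equations $\omega\cdot\partial_x U(x)=-\bigl(H(x,0)-\langle H(\cdot,0)\rangle\bigr)+(\text{quadratic})$ and $\omega\cdot\partial_x V(x)=-\bigl(H_y(x,0)-\omega\bigr)+Q(x,0)\,c+(\text{quadratic})$, where the constant vector $c$ is chosen so that the average of the right-hand side of the second equation vanishes; this is solvable exactly because of the nondegeneracy hypothesis $|(\int_{\mathbb{T}^n}Q(x,0)\,dx)^{-1}|\le M$. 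The Diophantine condition \eqref{dio} then yields, by Fourier series and the standard small-divisor estimate, analytic solutions $U,V$ on a slightly narrower strip, with norms controlled by $\delta^*$ times a negative power of the width loss; the upshot is that $K$ satisfies the same three hypotheses with $\delta^*$ replaced by $c(\delta^*)^{3/2}$ and $r^*$ by a slightly smaller radius, while the new Hessian $Q':=K_{\eta\eta}$ differs from $Q$ by $\mathcal{O}(\delta^*/M)$.

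\textbf{Iteration and convergence.} Fix geometric sequences $r_\nu$ decreasing from $r^*$ to $\theta r^*$ with $r_\nu-r_{\nu+1}=\mathcal{O}(2^{-\nu}(1-\theta)r^*)$, and $\delta_\nu$ with $\delta_0=\delta^*$ and $\delta_{\nu+1}=c\,\delta_\nu^{3/2}$, so that $\delta_\nu\to0$ super-exponentially once $\delta^*\le\delta_*$ is small. Put $H_0=H$, let $\phi_\nu$ be the transformation furnished by the KAM step applied to $H_\nu$, and set $H_{\nu+1}=H_\nu\circ\phi_\nu$, $\Phi_\nu=\phi_0\circ\cdots\circ\phi_\nu$. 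By induction $H_\nu$ satisfies the three hypotheses at level $(r_\nu,\delta_\nu)$ with the \emph{same} $M$: the first two by the quadratic gain of the step, and the third because $\sum_\nu|Q_{\nu+1}-Q_\nu|=\mathcal{O}(\sum_\nu\delta_\nu/M)$ is small, so that $|Q_\nu|\le M$ and $|(\int_{\mathbb{T}^n}Q_\nu(x,0)\,dx)^{-1}|\le M$ persist after shrinking $\delta_*$. Since $|\phi_\nu-\operatorname{id}|$ and $|(\phi_\nu)_\zeta-\mathbb{I}|$ are $\mathcal{O}(\delta_\nu)$, the $\Phi_\nu$ are Cauchy in the sup-norm on the limit strip $|\operatorname{Im}\xi|,|\eta|\le\theta r^*$ and converge to a real analytic symplectic $\phi$ of the stated form with $u(\xi)-\xi$, $v(\xi)$ of period $1$; passing to the limit in $K^\nu_\xi(\xi,0)=\mathcal{O}(\delta_\nu)$ and $K^\nu_\eta(\xi,0)=\omega+\mathcal{O}(\delta_\nu)$ gives $K_\xi(\xi,0)=0$, $K_\eta(\xi,0)=\omega$. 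The four quantitative estimates follow by summing the telescoping series, whose $\nu=0$ term is $\mathcal{O}(\delta^*)$ (respectively $\mathcal{O}(\delta^*(r^*)^{\tau+1})$ for $v\circ u^{-1}$) and whose tail is $\mathcal{O}((\delta^*)^{3/2})$, absorbed into $c$.

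\textbf{Main obstacle.} The delicate point is the bookkeeping of domain shrinkage against the small-divisor loss: each homological equation costs a factor $(r_\nu-r_{\nu+1})^{-\tau}$, and one must check that with $r_\nu-r_{\nu+1}=\mathcal{O}(2^{-\nu}(1-\theta)r^*)$ the accumulated loss stays bounded while $\delta_\nu$ still contracts quadratically --- the exponents $2\tau+2$ and $\tau+1$ in the two smallness hypotheses are tuned exactly so that this balance works. Verifying this, together with the fact that the nondegeneracy constant $M$ does not deteriorate along the iteration, is where the real work lies; alternatively one may simply invoke Theorem~1 of \cite{salamon}.
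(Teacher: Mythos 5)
Your proposal is correct and coincides with the paper's treatment: the paper does not prove Theorem \ref{appendix} at all but simply refers to Theorem 1 of Salamon \cite{salamon}, and your sketch (generating function $\langle x,\eta\rangle+U(x)+\langle V(x),\eta\rangle$, two homological equations with the frequency fixed via the nondegeneracy of $\int_{\mathbb{T}^n}Q(x,0)\,{\rm d}x$, quadratic iteration with geometric radius loss from $r^*$ to $\theta r^*$) is exactly the scheme of that cited proof, with the citation itself offered as the fallback the paper actually uses.
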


 \section*{Acknowledgements} 
This work was supported in part by National Basic Research Program of China (Grant No. 2013CB834100), National Natural Science Foundation of China (Grant No. 12071175, Grant No. 11171132,  Grant No. 11571065), Project of Science and Technology Development of Jilin Province (Grant No. 2017C028-1, Grant No. 20190201302JC), and Natural Science Foundation of Jilin Province (Grant No. 20200201253JC).

\end{document}